\newtheorem*{thm1}{Theorem~1}
\newtheorem{thm}{Theorem}[section]
\newtheorem{lemma}[thm]{Lemma}
\newtheorem{prop}[thm]{Proposition}
\newtheorem{cor}[thm]{Corollary}
\newtheorem{conj}[thm]{Conjecture}
\theoremstyle{definition}
\newtheorem{remark}[thm]{Remark}
\newtheorem{example}[thm]{Example}
\newtheorem{defn}[thm]{Definition}
\DeclareMathOperator{\OF}{OF}
\DeclareMathOperator{\Aut}{Aut}
\DeclareMathOperator{\SO}{SO}
\DeclareMathOperator{\Span}{Span}
\DeclareMathOperator{\QH}{QH}
\DeclareMathOperator{\GW}{GW}
\DeclareMathOperator{\codim}{codim}
\DeclareMathOperator{\supp}{supp}
\newcommand{\N}{{\mathbb N}}
\newcommand{\Z}{{\mathbb Z}}
\newcommand{\C}{{\mathbb C}}
\newcommand{\cO}{{\mathcal O}}
\newcommand{\op}{\text{op}}
\newcommand{\gw}[2]{\langle #1 \rangle^{\mbox{}}_{#2}}
\newcommand{\euler}[1]{\chi_{_{#1}}}
\newcommand{\pt}{\text{point}}
\newcommand{\ev}{\operatorname{ev}}
\newcommand{\wh}{\widehat}
\newcommand{\wb}{\overline}
\newcommand{\ov}{\overline}
\newcommand{\pic}[2]{\includegraphics[scale=#1]{#2}}
\newcommand{\ignore}[1]{}
\newcommand{\Mb}{\wb{\mathcal M}}
\newcommand{\La}{\Lambda}
\newcommand{\la}{\lambda}
\newcommand{\ssm}{\smallsetminus}
\def\al{\alpha}
\def\be{\beta}
\def\de{\delta}
\def\ga{\gamma}
\def\alv{{\alpha^\vee}}
\def\bev{{\beta^\vee}}
\def\dev{{\delta^\vee}}
\def\gav{{\gamma^\vee}}
\def\noin{\noindent}
\def\R{{\mathbb R}}
\DeclareMathOperator{\height}{height}
\begin{document}

\title{Curve neighborhoods of Schubert varieties}

\date{March 24, 2013}

\author{Anders~S.~Buch}
\address{Department of Mathematics, Rutgers University, 110
  Frelinghuysen Road, Piscataway, NJ 08854, USA}
\email{asbuch@math.rutgers.edu}

\author{Leonardo~C.~Mihalcea}
\address{460 McBryde Hall, Department of Mathematics, Virginia Tech,
  Blacksburg, VA 24061, USA}
\email{lmihalce@math.vt.edu}

\subjclass[2010]{Primary 14N15; Secondary 14M15, 14N35, 05E15}

\thanks{The first author was supported in part by NSF grants
  DMS-0906148 and DMS-1205351.}

\thanks{The second author was supported in part by NSA Young
  Investigator Award 12-0871-10.}

\begin{abstract}
  A previous result of the authors with Chaput and Perrin states that
  the union of all rational curves of fixed degree passing through a
  Schubert variety in a homogeneous space $G/P$ is again a Schubert
  variety.  In this paper we identify this Schubert variety explicitly
  in terms of the Hecke product of Weyl group elements.  We apply our
  result to give an explicit formula for any two-point Gromov-Witten
  invariant as well as a new proof of the quantum Chevalley formula
  and its equivariant generalization.  We also recover a formula for
  the minimal degree of a rational curve between two given points in a
  cominuscule variety.
\end{abstract}

\maketitle


\section{Introduction}

Let $X = G/P$ be a homogeneous space defined by a semisimple complex
Lie group $G$ and a parabolic subgroup $P$.  The quantum cohomology
ring of $X$ is closely related to the geometry of rational curves in
$X$ and has received much attention since the mid 1990's.  Given a
subvariety $\Omega \subset X$ and an effective degree $d \in H_2(X)$,
define the {\em curve neighborhood\/} $\Gamma_d(\Omega)$ to be the
closure of the union of all rational curves of degree $d$ in $X$ that
meet $\Omega$.  Recent developments suggest that this variety is a key
object in the study of the quantum $K$-theory ring of $X$.

The study of $\Gamma_d(\Omega)$ was initiated in the recent paper
\cite{buch.chaput.ea:finiteness} by Chaput, Perrin, and the authors.
Using that the Kontsevich moduli space of stable maps to $X$ is
irreducible it was proved that, if $\Omega$ is an irreducible
subvariety of $X$, then $\Gamma_d(\Omega)$ is also irreducible.  In
particular, if $\Omega$ is a Schubert variety in $X$, then so is
$\Gamma_d(\Omega)$.  The applications to quantum $K$-theory require a
precise description of this locus.  This was obtained in
\cite{buch.chaput.ea:finiteness} when $X$ is any cominuscule
homogeneous space.  For example, when $X$ is a Grassmann variety of
type A and $\Omega = \Omega_\la$ is a Schubert variety corresponding
to a Young diagram $\la$, the Young diagram associated to
$\Gamma_d(\Omega_\la)$ is obtained by removing the first $d$ rows and
columns from $\la$.  This operation on Young diagrams has appeared in
several references, possibly starting with
\cite{fulton.woodward:quantum}.  The main result of this paper is an
explicit combinatorial formula for the Weyl group element
corresponding to $\Gamma_d(\Omega)$ when $\Omega \subset X$ is a
Schubert variety in an arbitrary homogeneous space.  A description of
the curve neighborhood of a Richardson variety has been obtained in
\cite{li.mihalcea:k-theoretic} when $d$ is the degree of a line and
the Fano variety of lines of degree $d$ in $X$ is a homogeneous space.

Fix a maximal torus $T$ and a Borel subgroup $B$ such that $T \subset
B \subset P \subset G$.  Let $W$ be the Weyl group of $G$ and $W_P$
the Weyl group of $P$.  Each element $w \in W$ defines a Schubert
variety $X(w) = \ov{B w.P}$ in $X$ and an opposite Schubert variety
$Y(w) = \ov{B^\op w.P}$, where $B^\op$ is the opposite Borel subgroup.
If $w$ is the minimal representative for its coset in $W/W_P$, then we
have $\dim X(w) = \codim Y(w) = \ell(w)$.  Given a positive root $\al$
with $s_\al \notin W_P$, let $C_\al \subset X$ be the unique
$T$-stable curve that contains the $T$-fixed points $1.P$ and
$s_\al.P$.  The homology group $H_2(X) = H_2(X;\Z)$ can be identified
with the quotient $\Z\Delta^\vee / \Z\Delta_P^\vee$, where
$\Z\Delta^\vee$ is the coroot lattice of $G$ and $\Z\Delta_P^\vee$ is
the coroot lattice of $P$.  The degree $[C_\al] \in H_2(X)$ is equal
to the image of the coroot $\alv = \frac{2\al}{(\al,\al)}$ under this
identification.

Our description of $\Gamma_d(X(w))$ is formulated using the {\em Hecke
  product\/} on $W$, which by definition is the unique associative
monoid product such that, for any simple reflection $s_\be$ and $w \in
W$ we have
\[
w \cdot s_\be = \begin{cases}
  w s_\be & \text{if $\ell(w s_\be) > \ell(w)$;} \\
  w & \text{otherwise.}
\end{cases}
\]

\begin{thm1}
  Assume that $0 < d \in H_2(X)$, and let $\al$ be any maximal root
  with the property that $\alv \leq d$ as elements in $H_2(X)$.
  Then we have $\Gamma_d(X(w)) = \Gamma_{d-\alv}(X(w \cdot s_\al))$.
\end{thm1}

A root $\al$ will be called {\em $P$-cosmall\/} if $s_\al \notin W_P$
and $\al$ satisfies the condition of Theorem~1 for any positive degree
$d \in H_2(X)$.  This condition makes simultaneous use of the
partial orders of the root system and the dual root system of $G$,
which gives rise to some interesting combinatorics.

Let $z_d^P \in W$ denote the minimal representative for the curve
neighborhood of a point, i.e.\ $\Gamma_d(X(1)) = X(z_d^P)$.  Theorem~1
then implies that $\Gamma_d(X(w)) = X(w \cdot z_d^P)$.  Much of our
paper therefore focuses on the curve neighborhood of a point.

Curve neighborhoods are related to Fulton and Woodward's work
\cite{fulton.woodward:quantum} on determining the smallest degree of
the quantum parameter that appears in a product of Schubert classes in
the (small) quantum ring $\QH(X)$.  Let $\Mb_{0,n}(X,d)$ denote the
Kontsevich moduli space of $n$-pointed stable maps to $X$ of degree
$d$, with evaluation map $\ev = (\ev_1,\dots,\ev_n) : \Mb_{0,n}(X,d)
\to X^n$.  Then the curve neighborhood of $X(w)$ can be defined by
$\Gamma_d(X(w)) = \ev_1(\ev_2^{-1}(X(w)))$.  It is proved in
\cite{fulton.woodward:quantum} that the quantum product $[Y(u)] \star
[Y(w)]$ contains a term $q^{d'} [Y(v)]$ with $d' \leq d$ if and only
if the Gromov-Witten variety $\ev_1^{-1}(Y(u)) \cap \ev_2^{-1}(X(w_0
w))$ is not empty in $\Mb_{0,2}(X,d)$, where $w_0$ is the longest
element in $W$.  The later condition is equivalent to $Y(u) \cap
\Gamma_d(X(w_0 w)) \neq \emptyset$, which holds if and only if $u W_P
\leq w_0 w \cdot z_d^P W_P$ in the Bruhat order of $W/W_P$.

Define the Gromov-Witten variety $\GW_d(w) = \ev_2^{-1}(X(w)) \subset
\Mb_{0,2}(X,d)$ and consider the surjective map $\ev_1 : \GW_d(w) \to
\Gamma_d(X(w))$.  It was proved in \cite{buch.chaput.ea:finiteness}
that the general fibers of this map are unirational.  This implies
that the pushforward $(\ev_1)_*[\GW_d(w)] \in H^*_T(X)$ is equal to
$[X(w \cdot z_d^P)]$ whenever $\dim \GW_d(w) = \dim X(w \cdot z_d^P)$, and
otherwise $(\ev_1)_*[\GW_d(w)] = 0$.  It follows that any
(equivariant) two-point Gromov-Witten invariant of $X$ is given by
\begin{equation}\label{eqn:2gw}
  \begin{split}
    & I_d([Y(u)],[X(w)]) \ = \ 
    \int_{\Mb_{0,2}(X,d)} \ev_1^*[Y(u)] \cdot \ev_2^*[X(w)] \\
    & \ \ = \ \begin{cases}
      1 & \text{if $\dim \GW_d(w) = \dim X(w \cdot z_d^P)$ and 
        $w \cdot z_d^P W_P = u W_P$\,;} \\
      0 & \text{otherwise.}
    \end{cases}
  \end{split}
\end{equation}
It turns out that, if this invariant is non-zero, then $d = \alv \in
H_2(X)$ for a unique $P$-cosmall root $\al$, and we have $u W_P = w
s_\al W_P$.  A similar formula holds for the more general 2-point
$K$-theoretic Gromov-Witten invariants, see Remark~\ref{rmk:2kgw}
below.

We apply our methods to give a new proof of the (equivariant) quantum
Chevalley formula for any product of a Schubert divisor with an
arbitrary Schubert class in the equivariant quantum ring
$\QH_T(X)$.  In fact, all Gromov-Witten invariants required in such
a product can be obtained from (\ref{eqn:2gw}) combined with the
divisor axiom in Gromov-Witten theory
\cite{kontsevich.manin:gromov-witten}.  The quantum Chevalley formula
was first stated in a lecture given by Dale Peterson at M.I.T., and a
proof was later supplied by Fulton and Woodward
\cite{fulton.woodward:quantum}.  The equivariant generalization is due
to the second author \cite{mihalcea:equivariant*1} and states that all
terms of a product involving a Schubert divisor in $\QH_T(X)$ are also
visible in the equivariant cohomology $H^*_T(X)$ or in $\QH(X)$.

We remark that if $P$ is not a Borel subgroup of $G$, then the ring
$\QH_T(X)$ is not generated by divisor classes.  However, it was
demonstrated in \cite{mihalcea:equivariant*1} that all (3 point, genus
zero) equivariant Gromov-Witten invariants of $X$ can be computed with
an explicit algorithm based on the equivariant quantum Chevalley
formula.  This has been applied by Lam and Shimozono to prove that the
equivariant Gromov-Witten invariants of $X$ coincide with certain
structure constants of the equivariant homology of the affine
Grassmannian \cite{lam.shimozono:quantum}.

Our paper is organized as follows.  In section~\ref{sec:schubvars} we
recall some basic facts about Schubert classes on $X$.
Section~\ref{sec:hecke} defines the Hecke product and gives
combinatorial proofs of its main properties.  In
section~\ref{sec:comb_zd} we use the statement of Theorem~1 to give a
combinatorial construction of the element $z_d^P \in W$.  We then
prove a key technical result stating that for all effective degrees $0
\leq d' \leq d \in H_2(X)$ we have $z_{d'}^P \cdot z_{d-d'}^P W_p \leq
z_d^P W_P$.  We remark that this inequality is easy to deduce from the
geometric definition $X(z_d^P) := \Gamma_d(X(1))$, but we need to work
combinatorially to obtain a complete proof of Theorem~1.  The results
in Section~\ref{sec:comb_zd} include some basic facts concerning
cosmall roots and Hecke products of reflections that are proved using
the classification of root systems (Lemmas \ref{lem:uniquemax},
\ref{lem:cover}, and \ref{lem:replace}).  All other results in our
paper are deduced from these facts in a type independent setup.
Theorem~1 is established in section~\ref{sec:curve_nbhd}, where we
also apply this result to recover a well known formula
\cite{zak:tangents, hwang.kebekus:geometry,
  chaput.manivel.ea:quantum*1} for the minimal degree of a rational
curve between two given points in a cominuscule variety.  In
Section~\ref{sec:criteria} we prove some equivalent conditions for
$P$-cosmall roots, one of them stating that $\al$ is $P$-cosmall if
and only if $\dim X(s_\al) = \int_{C_\al} c_1(T_X) - 1$.  In
combinatorial terms this implies that $\al$ is $B$-cosmall if and only
if $\ell(s_\al) = 2 \height(\al^\vee) - 1$.  Section~\ref{sec:gw_inv}
uses these results and related inequalities to prove an explicit
formula for any two-point Gromov-Witten invariant of $X$, and
Section~\ref{sec:chevalley} proves the equivariant quantum Chevalley
formula.  While this proof logically depends on many earlier results,
including the combinatorial construction of $z_d^P$, we finish our
paper by noting that the concept of curve neighborhoods can be used to
give a very short geometric proof of the quantum Chevalley formula.

We thank Pierre-Emmanuel Chaput and Nicolas Perrin for inspiring
collaboration on related projects, and Mark Shimozono for helpful
discussions.


\section{Schubert varieties}\label{sec:schubvars}

In this section we fix our notation for Schubert varieties and state
some basic facts.  Proofs can be found in e.g.\
\cite{gonciulea.lakshmibai:flag}.  Let $X = G/P$ be a homogeneous
space defined by a connected, simply connected, semisimple complex Lie
group $G$ and a parabolic subgroup $P$.  Fix also a maximal torus $T$
and a Borel subgroup $B$ such that $T \subset B \subset P \subset G$.
Let $R$ be the associated root system, with positive roots $R^+$ and
simple roots $\Delta \subset R^+$.  Let $W = N_G(T)/T$ be the Weyl
group of $G$ and $W_P = N_P(T)/T$ the Weyl group of $P$.  The
parabolic subgroup $P$ corresponds to the set of simple roots
$\Delta_P = \{ \be \in \Delta \mid s_\be \in W_P \}$.  The group $W_P$
is generated by the simple reflections $s_\be$ for $\be \in \Delta_P$.
Set $R_P = R \cap \Z\Delta_P$ and $R^+_P = R^+ \cap \Z\Delta_P$, where
$\Z\Delta_P = \Span_\Z(\Delta_P)$ is the group spanned by $\Delta_P$.

For each element $w \in W$ we let $I(w) = R^+ \cap w^{-1}(- R^+) = \{
\al \in R^+ \mid w(\al) < 0 \}$ denote the {\em inversion set\/} of
$w$.  The second expression uses the partial order $\leq$ on $\R\Delta
= \Span_\R(\Delta)$ defined by $\al \geq \be$ if and only if $\al-\be$
is a linear combination with non-negative coefficients of the simple
roots $\Delta$.  The length of $w$ is defined by $\ell(w) = |I(w)|$.
Equivalently, $\ell(w)$ is the minimal number of simple reflections
that $w$ can be a product of.  Define the length of the coset $w W_P
\in W/W_P$ to be $\ell(w W_P) = |I(w) \ssm R^+_P|$.  The element $w$
can be written uniquely as $w = u v$ such that $I(u) \cap R^+_P =
\emptyset$ and $v \in W_P$.  We then have $u W_P = w W_P$ and $\ell(u)
= \ell(w W_P)$.  The element $u$ is called the {\em minimal
  representative\/} for the coset $w W_P$.  Similarly, if $w_P$
denotes the longest element of $W_P$, then $u w_P$ is the {\em maximal
  representative\/} for $w W_P$.  Let $W^P \subset W$ be the set of
all minimal representatives for cosets in $W/W_P$.

Let $w_0$ be the longest element in $W$ and let $B^\op = w_0 B w_0
\subset G$ be the Borel subgroup opposite to $B$.  For $w \in W$ we
define the $B$-stable Schubert variety $X(w) = \ov{B w.P} \subset X$
and the $B^\op$-stable Schubert variety $Y(w) = \ov{B^\op w.P} \subset
X$.  These varieties depend only on the coset $w W_P$ and we have
$\dim X(w) = \codim Y(w) = \ell(w W_P)$.  We also have $X(w) \cap Y(w)
= \{ w.P \}$.  The collection of points $w.P$ for $w \in W^P$ is the
set of all $T$-fixed points in $X$.

The {\em Bruhat order\/} on $W/W_P$ is defined by $u W_P \leq w W_P$
if and only if $X(u) \subset X(w)$.  This order is compatible with the
Bruhat order on $W$ in the sense that $u W_P \leq w W_P$ whenever $u
\leq w$ in $W$.  This follows because $X(u)$ is the image of $\ov{B
  u.B}$ under the projection $G/B \to X$.

Let $(-,-)$ denote the $W$-invariant inner product on $\R\Delta$.
Each root $\al \in R$ has a coroot $\alv = \frac{2\al}{(\al,\al)}$.
The coroots form the dual root system $R^\vee = \{ \alv \mid \al \in R
\}$, with basis of simple coroots $\Delta^\vee = \{ \bev \mid \be \in
\Delta \}$.  For $\be \in \Delta$ we let $\omega_\be \in \R\Delta$
denote the corresponding fundamental weight, defined by $(\omega_\be,
\alv) = \delta_{\al,\be}$ for $\al \in \Delta$.

\begin{lemma}\label{lem:salcancel}
  Let $\al \in R$ and let $S \subset R$ be any set of roots such that
  $s_\al(S) = S$.  Then we have
  \[
  \sum_{\ga\in S} (\al,\gav) = \sum_{\ga\in S} (\ga,\alv) = 0 \,.
  \]
\end{lemma}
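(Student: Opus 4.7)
The plan is to exploit the fact that $s_\alpha$ permutes $S$, so any sum of a scalar-valued function of $\gamma$ over $\gamma \in S$ equals the sum of that same function composed with $s_\alpha$. For each of the two claimed identities I will find a function whose $s_\alpha$-twist equals its negative; the sum will then equal minus itself and must vanish.

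For the second equality $\sum_{\gamma\in S}(\gamma,\alv)=0$, I would apply the reflection formula $s_\alpha(\gamma) = \gamma - (\gamma,\alv)\,\alpha$ to every $\gamma\in S$ and sum. Since $s_\alpha$ permutes $S$, the left and right sides have the same total, so the term $\bigl(\sum_{\gamma\in S}(\gamma,\alv)\bigr)\alpha$ must be zero, and hence the coefficient is zero because $\alpha\ne 0$.

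For the first equality $\sum_{\gamma\in S}(\alpha,\gav)=0$, the cleanest route is to use two standard facts: the form $(-,-)$ is $W$-invariant, and the coroot map commutes with Weyl-group action, i.e. $s_\alpha(\gav) = s_\alpha(\gamma)^\vee$ (this follows because $s_\alpha$ is an isometry, so it preserves the rescaling $\gamma \mapsto 2\gamma/(\gamma,\gamma)$). Combined with $s_\alpha(\alpha)=-\alpha$, these give $(\alpha,s_\alpha(\gamma)^\vee) = (s_\alpha(\alpha), s_\alpha(\gav)) \cdot (-1)^0 = -(\alpha,\gav)$. Reindexing the sum via $\delta = s_\alpha(\gamma)$ and using $s_\alpha(S)=S$ then shows $\sum_{\gamma\in S}(\alpha,\gav) = -\sum_{\gamma\in S}(\alpha,\gav)$, so the sum vanishes.

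This lemma is essentially formal, so there is no serious obstacle; the only mild subtlety is making sure to record the identity $s_\alpha(\gav) = s_\alpha(\gamma)^\vee$ for the first sum. An alternative, even more uniform approach would be to observe that both $(\alpha,(-)^\vee)$ and $((-),\alv)$ are functions on $R$ that are antisymmetric under $\gamma \mapsto s_\alpha(\gamma)$, and that any antisymmetric function summed over an $s_\alpha$-stable set must give zero; I expect to present the two identities side by side using this common pattern.
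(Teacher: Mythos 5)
Your proposal is correct and follows essentially the same route as the paper: the second identity is the telescoping/reflection-formula argument, and the first identity is obtained by the dual antisymmetry observation (the paper phrases this as applying the same argument to $S^\vee$ under $s_{\alv}$, which is equivalent to your direct computation using $W$-invariance and $s_\al(\gav)=s_\al(\ga)^\vee$). One small slip: the intermediate expression ``$(s_\al(\al),s_\al(\gav))\cdot(-1)^0$'' is garbled --- by $W$-invariance $(\al,s_\al(\gav))=(s_\al(\al),\gav)=(-\al,\gav)$, which already gives the sign, so the stray ``$(-1)^0$'' should be deleted.
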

\begin{proof}
  Since $s_\al$ is an involution of $S$ defined by $s_\al(\ga) = \ga -
  (\ga,\alv)\al$, we obtain
  \[
  \sum_{\ga\in S} (\ga,\alv) = \sum_{\ga\in S}
  \frac{\ga-s_\al(\ga)}{\al} = 0 \,.
  \]
  Since we have $s_{\alv}(S^\vee) = S^\vee$, the first sum of the
  lemma is also equal to zero.
\end{proof}

We need the following observation, which can also be found in
\cite[p.~648]{fulton.woodward:quantum}.

\begin{lemma}\label{lem:alpha_unique}
  Let $\al \in R^+ \ssm R^+_P$.  Then $\al$ is uniquely determined by
  the coset $s_\al W_P \in W/W_P$.
\end{lemma}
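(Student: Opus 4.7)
\medskip

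\noindent\textbf{Proof plan.}
My plan is to pin down $\al$ from $s_\al W_P$ by testing how $s_\al$ acts on the fundamental weights $\omega_\be$ for $\be\in\Delta\ssm\Delta_P$, which are exactly the weights that $W_P$ fixes.

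First, I would note that for any $\be\in\Delta\ssm\Delta_P$ and any $\ga\in\Delta_P$ we have $s_\ga(\omega_\be)=\omega_\be-(\omega_\be,\gav)\ga=\omega_\be$, since $(\omega_\be,\gav)=\delta_{\be,\ga}=0$. Since $W_P$ is generated by such $s_\ga$, every element of $W_P$ fixes $\omega_\be$ pointwise. Consequently, if $\al,\al'\in R^+\ssm R^+_P$ satisfy $s_\al W_P=s_{\al'}W_P$, then $s_\al(\omega_\be)=s_{\al'}(\omega_\be)$ for all $\be\in\Delta\ssm\Delta_P$. Expanding both sides via $s_\al(\omega_\be)=\omega_\be-(\omega_\be,\alv)\al$ yields the identity
\[
(\omega_\be,\alv)\,\al \;=\; (\omega_\be,(\al')^\vee)\,\al'
\qquad \text{for every } \be\in\Delta\ssm\Delta_P.
\]

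Next I would find some $\be\in\Delta\ssm\Delta_P$ with $(\omega_\be,\alv)\ne 0$. Writing $\al=\sum_{\ga\in\Delta} c_\ga\,\ga$ with $c_\ga\geq 0$, and using $\alv=\tfrac{2}{(\al,\al)}\al$ together with $\gav=\tfrac{2}{(\ga,\ga)}\ga$, one gets $\alv=\sum_\ga c_\ga\tfrac{(\ga,\ga)}{(\al,\al)}\gav$, so the coefficient of $\bev$ in $\alv$ equals $(\omega_\be,\alv)$ and has the same sign as $c_\be$. Because $\al\notin R^+_P$, at least one of the coefficients $c_\be$ with $\be\in\Delta\ssm\Delta_P$ is strictly positive, and for that $\be$ we have $(\omega_\be,\alv)>0$.

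For this particular $\be$, the displayed identity shows that $\al'$ is a positive scalar multiple of $\al$. In a reduced (crystallographic) root system, the only positive roots proportional to $\al$ is $\al$ itself, so $\al'=\al$, completing the proof. The only nontrivial step is the observation that $\al\notin R^+_P$ implies $(\omega_\be,\alv)\ne 0$ for some $\be\in\Delta\ssm\Delta_P$, which is the point where the hypothesis $\al\notin R^+_P$ is actually used; everything else is a direct computation.
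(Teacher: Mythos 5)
Your proof is correct and takes essentially the same approach as the paper: both recover $\al$ up to a positive scalar by applying $s_\al$ to a $W_P$-invariant weight. The paper compresses your argument by using the single weight $\la = \sum_{\be\in\Delta\ssm\Delta_P}\omega_\be$, for which $(\la,\alv)>0$ is automatic from $\al\notin R^+_P$, thereby avoiding the need to select a particular $\be$ with $(\omega_\be,\alv)\neq 0$.
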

\begin{proof}
  Set $\la = \sum_{\be \in \Delta \ssm \Delta_P} \omega_\be$.  Then
  $W_P$ acts trivially on $\la$, while $\la - s_\al.\la = (\la,\alv)\,
  \al$ is a non-zero multiple of $\al$.  The lemma follows from this
  because distinct positive roots are never parallel.
\end{proof}

All homology and cohomology groups in this paper are taken with
integer coefficients.  Any closed irreducible subvariety $Z \subset X$
defines a fundamental homology class $[Z] \in H_{2\dim(Z)}(X)$.  We
will also use the notaion $[Z]$ for its Poincare dual class in
$H^{2\codim(Z)}(X)$.  The {\em Schubert classes\/} $[Y(w)]$ for $w \in
W^P$ form a basis for the cohomology ring $H^*(X)$.  It is convenient
to identify $H^2(X)$ with the span $\Z \{ \omega_\be \mid \be \in
\Delta\ssm\Delta_P \}$ and $H_2(X)$ with the quotient $\Z\Delta^\vee /
\Z\Delta_P^\vee$.  More precisely, for each $\be \in
\Delta\ssm\Delta_P$ we identify the class $[X(s_\be)] \in H_2(X)$ with
$\bev + \Z\Delta_P^\vee \in \Z\Delta^\vee / \Z\Delta_P^\vee$ and we
identify $[Y(s_\be)] \in H^2(X)$ with $\omega_\be$.  The Poincare
pairing $H^2(X) \otimes H_2(X) \to \Z$ is then given by the
$W$-invariant inner product $(-,-)$ on $\R\Delta$.

For each positive root $\al \in R^+ \ssm R^+_P$ there is a unique
irreducible $T$-stable curve $C_\al \subset X$ that contains $1.P$ and
$s_\al.P$.  We can restate \cite[Lemma~3.4]{fulton.woodward:quantum} as the
identity
\begin{equation}
  [C_\al] = \al^\vee + \Z\Delta_P^\vee \in H_2(X) \,.
\end{equation}
According to \cite[Lemma~3.5]{fulton.woodward:quantum} we have
\begin{equation}\label{eqn:c1T}
  c_1(T_X) \ = \ \sum_{\ga \in R^+\ssm R^+_P} \ga \ \in H^2(X) \,.
\end{equation}
Indeed, if we let $c_1 = \sum_{\ga \in R^+\ssm R^+_P} \ga$ denote the
right hand side of (\ref{eqn:c1T}), then the cited lemma implies that
$(c_1, \bev) = \int_{X(s_\be)} c_1(T_X)$ for $\be \in
\Delta\ssm\Delta_P$, and Lemma~\ref{lem:salcancel} shows that
$(c_1,\bev)=0$ for each $\be \in \Delta_P$, hence $c_1 \in
\Z\{\omega_\be \mid \be \in \Delta\ssm\Delta_P \} = H^2(X)$.

If $\la \in \Z\{\omega_\be \mid \be \in \Delta\ssm\Delta_P\}$ is an
integral weight, then the assumption that $G$ is simply connected
implies that $\la$ is represented by a character $\la : T \to \C^*$.
It therefore defines the line bundle $L_\la := G \times^P \C_{-\la} =
(G \times \C)/P$ over $X$, where $P$ acts on $G \times \C$ by $p.(g,z)
= (g p^{-1}, \la(p)^{-1}z)$.  By \cite[p.~71]{brion.kumar:frobenius}
we then have
\begin{equation}\label{eqn:c1Lla}
  c_1(L_\la) = \la \in H^2(X) \,.
\end{equation}


\section{The Hecke product}\label{sec:hecke}

Our description of curve neighborhoods is formulated in terms of the
Hecke product, which provides a monoid structure on the Weyl group
$W$.  This product describes the multiplication of basis elements in a
Hecke algebra that was first studied in the context of Tits buildings
\cite[Ch.~4, \S 2.1]{bourbaki:lie*1}.  It also describes the
composition of Demazure operators \cite{demazure:desingularisation}
and plays a key role in the combinatorial study of $K$-theory of
homogeneous spaces, see e.g.\ \cite{kostant.kumar:t-equivariant,
  fomin.kirillov:yang-baxter}.  While the Hecke product and its
properties are well known, we do not know about a reference that gives
a short unified exposition, so we have taken the opportunity to
provide one here.  Everything in this section works more generally if
$W$ is a Coxeter group, see \cite{bjorner.brenti:combinatorics} for
definitions.

For $u \in W$ and $\be \in \Delta$, define
\begin{equation}\label{eqn:dem_right}
u \cdot s_\be = \begin{cases}
  u s_\be & \text{if $u s_\be > u$;} \\
  u & \text{if $u s_\be < u$.}
\end{cases}
\end{equation}
Let $u, v \in W$ and let $v = s_{\be_1} s_{\be_2} \cdots s_{\be_\ell}$
be any reduced expression for $v$.  Define the {\em Hecke product\/}
of $u$ and $v$ by
\[
u \cdot v = u \cdot s_{\be_1} \cdot s_{\be_2} \cdot \ldots \cdot s_{\be_\ell}
\,,
\]
where the simple reflections are multiplied to $u$ in left to right
order.

We claim that this product is independent of the chosen reduced
expression for $v$.  In fact, any reduced expression for $v$ can be
obtained from any other by using finitely many {\em braid relations},
i.e.\ by steps that replace a subexpression of the form $t_0 t_1
\cdots t_{m-1}$ with $t_1 t_2 \cdots t_m$, where $t_{2i} = s_\al$ and
$t_{2i+1} = s_\be$ for given simple roots $\al, \be \in \Delta$ and
all $i \in \N$.  It is therefore enough to show that
\begin{equation}\label{eqn:dem_welldef}
  u \cdot t_0 \cdot t_1 \cdot \ldots \cdot t_{m-1} =
  u \cdot t_1 \cdot t_2 \cdot \ldots \cdot t_m \,.
\end{equation}
Let $W_{\al,\be} \subset W$ denote the parabolic subgroup generated by
$s_\al$ and $s_\be$.  Then $t_0 t_1 \cdots t_{m-1} = t_1 t_2 \cdots
t_m$ is the longest element of $W_{\al,\be}$, and both sides of
(\ref{eqn:dem_welldef}) are equal to the unique maximal representative
for the coset $u W_{\al,\be}$ in $W/W_{\al,\be}$.

Given $u, v \in W$, we will say that the product $u v$ is {\em
  reduced\/} if $\ell(u v) = \ell(u) + \ell(v)$.  This implies that $w
\cdot u v = (w \cdot u) \cdot v$ for all $w \in W$.  Notice also that
for $\be \in \Delta$ and $v \in W$ we have
\begin{equation}\label{eqn:dem_left}
s_\be \cdot v = \begin{cases}
  s_\be v & \text{if $s_\be v > v$;}\\
  v & \text{if $s_\be v < v$.}
\end{cases}
\end{equation}
In fact, if we set $v' = s_\be v$, then the identity is clear if
$\ell(v') > \ell(v)$, and otherwise $v = s_\be v'$ is a reduced
product, hence $s_\be \cdot v = (s_\be \cdot s_\be) \cdot v' = s_\be
\cdot v' = v$.

\begin{prop}\label{prop:hecke}
  Let $u, v, v', w \in W$.\smallskip

  \noin{\rm(a)} The Hecke product is associative, i.e.\ $(u \cdot
  v) \cdot w = u \cdot (v \cdot w)$.\smallskip

  \noin{\rm(b)} We have $(u \cdot v)^{-1} = v^{-1} \cdot
  u^{-1}$.\smallskip

  \noin{\rm(c)} If $v \leq v'$ then $u \cdot v \cdot w \leq u \cdot
  v' \cdot w$.\smallskip

  \noin{\rm(d)} We have $u \leq u \cdot v$, $v \leq u \cdot v$, $u v
  \leq u \cdot v$, and $\ell(u \cdot v) \leq \ell(u) +
  \ell(v)$.\smallskip
  
  \noin{\rm(e)} The element $u' = (u \cdot v) v^{-1}$ satisfies $u'
  \leq u$ and $u'v = u' \cdot v = u \cdot v$.\smallskip
  
  \noin{\rm(f)} $I(v) \subset I(u \cdot v)$.
\end{prop}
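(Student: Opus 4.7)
The plan is to prove the six parts in order, using induction on $\ell(v)$ or $\ell(w)$ together with the subword characterization of the Bruhat order and the standard strong exchange and lifting properties of Coxeter groups. I will rely throughout on the identity $w \cdot (xy) = (w \cdot x) \cdot y$ for reduced $xy$ (established just before the proposition) and on the direct computation $(b \cdot s_\be) \cdot s_\be = b \cdot s_\be$ for $b \in W$ and $\be \in \Delta$, which follows by a two-case check.

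For associativity (a), I induct on $\ell(w)$ to reduce to $w = s_\be$ a single simple reflection, where I must show $(u \cdot a) \cdot s_\be = u \cdot (a \cdot s_\be)$. When $a s_\be > a$, both sides equal $(u \cdot a) \cdot s_\be$ via a reduced expression of $a s_\be = a \cdot s_\be$ obtained by appending $s_\be$ to one for $a$. When $a s_\be < a$, I pick a reduced expression of $a$ ending in $s_\be$, write $a = a'' s_\be$ reduced so $u \cdot a = (u \cdot a'') \cdot s_\be$; then $(u \cdot a) \cdot s_\be = ((u \cdot a'') \cdot s_\be) \cdot s_\be = (u \cdot a'') \cdot s_\be = u \cdot a$, which matches $u \cdot (a \cdot s_\be) = u \cdot a$. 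For the inverse formula (b), I induct on $\ell(v)$ with $v = s_\be v'$ reduced; the key observation $(u \cdot s_\be)^{-1} = s_\be \cdot u^{-1}$ is a two-case check on whether $u s_\be > u$, and associativity closes the induction.

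For monotonicity (c), I rely on two inputs: (i) $a \leq a \cdot s_\be$, immediate from the definition, and (ii) the Bruhat lifting property $a \leq b \Rightarrow a \cdot s_\be \leq b \cdot s_\be$, a short case analysis on the four possibilities for whether $a s_\be \gtrless a$ and $b s_\be \gtrless b$. Given a reduced expression of $v'$ and a reduced subexpression for $v$, applying (i) at skipped positions and (ii) at kept positions yields $u \cdot v \leq u \cdot v'$; iterating (ii) along a reduced expression of $w$ extends this to $u \cdot v \cdot w \leq u \cdot v' \cdot w$. Part (d) then follows quickly: the length bound is immediate by induction on $\ell(v)$ since a single Hecke step changes length by $0$ or $1$; $u \leq u \cdot v$ is (c) applied with $v$ replaced by $e$; $v \leq u \cdot v$ results from applying the previous inequality to $v^{-1} \leq v^{-1} \cdot u^{-1}$ and inverting via (b); and $uv \leq u \cdot v$ is an induction on $\ell(v)$ combining $as \leq a \cdot s$ with (ii) from (c).

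The main obstacle is part (e). I induct on $\ell(v)$, writing $v = v' s_\be$ reduced. By the inductive hypothesis $u \cdot v' = u'' v'$ with $u'' \leq u$ and the product reduced. If $(u'' v') s_\be > u'' v'$, then $u \cdot v = u'' v$ with the product reduced, and I take $u' := u''$. Otherwise $u \cdot v = u'' v'$, and I apply strong exchange to a concatenated reduced expression of $u'' v'$ to find the letter whose deletion yields a reduced expression for $u'' v' s_\be$. The critical step is that this deletion cannot lie in the $v'$ portion, since such a deletion would force $\ell(v' s_\be) = \ell(v') - 1$, contradicting the reducedness of $v = v' s_\be$; hence the deletion lies in the $u''$ portion, yielding $u'' v' s_\be = u''_* v'$ with $u''_* \leq u'' \leq u$. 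Setting $u' := u''_*$, a length count shows $u' v = u''_* v' s_\be$ is reduced, so $u' \cdot v = u' v = u \cdot v$. Finally, (f) is immediate from (e): writing $u \cdot v = u' v$ reduced and partitioning $R^+ = I(v) \sqcup (R^+ \ssm I(v))$, a direct count of the inversion set gives $|I(u' v)| = \ell(u') + \ell(v) - 2|A|$ with $A = \{\al \in I(v) : -v(\al) \in I(u')\}$, so reducedness forces $A = \emptyset$, which is exactly the assertion $I(v) \subset I(u \cdot v)$.
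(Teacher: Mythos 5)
Your proof is correct in all six parts, and it is a genuinely different route from the paper's in several places, so a comparison is worthwhile.

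In (a), the paper inducts by peeling the leftmost simple reflection off the \emph{middle} factor $v$, reducing to the identity $(u\cdot s_\be)\cdot w = u\cdot(s_\be\cdot w)$; you instead induct on $\ell(w)$, peeling the rightmost letter off the right factor, reducing to $(u\cdot a)\cdot s_\be = u\cdot(a\cdot s_\be)$. Both reductions are valid, and your key sub-claim is cleanly handled by the idempotence $(b\cdot s_\be)\cdot s_\be = b\cdot s_\be$. In (c), the paper's one-step lemma $v\cdot s_\be\le v'\cdot s_\be$ is proved by viewing $v\cdot s_\be$ as the maximal representative of $vW_\be$ and invoking monotonicity of the induced order on $W/W_\be$; you instead prove it via a four-case check (the lifting property of the Bruhat order) and then propagate using the subword characterization. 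Your route imports a standard Coxeter-theoretic tool where the paper stays closer to the parabolic-coset formalism that runs through the rest of the paper. The most substantial divergence is in (e): the paper constructs $u'$ directly by scanning a reduced word for $u$ from right to left and keeping precisely the letters that increase the partial Hecke product, getting a reduced subword in one pass; you instead argue by induction on $\ell(v)$ and, in the "absorbed" case, apply the (strong) exchange condition to a concatenated reduced word for $u''v'$, with the key observation that the deleted letter cannot lie in the $v'$ portion (else $\ell(v's_\be) < \ell(v')$, contradicting reducedness of $v = v's_\be$). Both give the same element $u' = (u\cdot v)v^{-1}$; the paper's construction is shorter, while your argument is a textbook exchange-condition proof. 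Finally, for (f) the paper reads the inclusion $I(v)\subset I(s_\be\cdot v)$ directly off the definition of the left Hecke action and iterates; you instead deduce (f) from (e) via the cardinality identity $\ell(wz) = \ell(w)+\ell(z) - 2\,|\{\al\in I(z): -z(\al)\in I(w)\}|$, which forces the obstruction set to vanish when $u'v$ is reduced. Both are correct; your version makes the logical dependence (f)$\,\Leftarrow\,$(e) explicit, whereas the paper's (f) is independent of (e). Overall your proof leans more heavily on the standard Coxeter toolkit (subword property, lifting, exchange condition), which makes it a bit longer but arguably more transferable; the paper's proof is tighter and exploits the Hecke-product structure more directly, especially in (e) and (f).
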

\begin{proof}
  To prove (a) it is enough to show that $(u \cdot s_\be) \cdot w = u
  \cdot (s_\be \cdot w)$ for each $\be \in \Delta$.  This is clear if
  $s_\be \cdot w = s_\be w$, and it is also clear if $u \cdot s_\be =
  u$ and $s_\be \cdot w = w$.  Assume that $u \cdot s_\be = u s_\be$
  and $s_\be \cdot w = w$, and set $w' = s_\be w$.  Since $w = s_\be
  w'$ is a reduced product, we obtain $(u \cdot s_\be) \cdot w = ((u
  \cdot s_\be) \cdot s_\be) \cdot w' = (u \cdot s_\be) \cdot w' = u
  \cdot w = u \cdot (s_\be \cdot w)$, as required.  Part (b) follows
  from the associativity together with (\ref{eqn:dem_right}) and
  (\ref{eqn:dem_left}).  To prove (c) it is enough to show that $v
  \cdot s_\be \leq v' \cdot s_\be$ for each $\be \in \Delta$.  This is
  true because $W_\be = \{1,s_\be\}$ is a parabolic subgroup of $W$,
  $v \cdot s_\be$ is the maximal representative for $v W_\be$ in
  $W/W_\be$, $v' \cdot s_\be$ is the maximal representative for $v'
  W_\be$, and $v W_\be \leq v' W_\be$.  For (d), the inequality $u
  \leq u \cdot v$ follows from (\ref{eqn:dem_right}), and $v \leq u
  \cdot v$ follows from (\ref{eqn:dem_left}).  If we write $v = v'
  s_\be$ as a reduced product with $\be \in \Delta$, then it follows
  from (c) and induction on $\ell(v)$ that $u \cdot v = (u \cdot v')
  \cdot s_\be \geq u v' \cdot s_\be \geq u v' s_\be = u v$.  The
  inequality $\ell(u \cdot v) \leq \ell(u) + \ell(v)$ is clear from
  the definition.  For (e), let $u = s_{\al_1} s_{\al_2} \cdots
  s_{\al_\ell}$ be a reduced expression for $u$, and set $y_j =
  s_{\al_j} \cdot s_{\al_{j+1}} \cdot \ldots \cdot s_{\al_\ell} \cdot
  v$ for each $j$.  Let $\{i_1 < i_2 < \dots < i_p\}$ be the set of
  indices $j$ for which $y_j \neq y_{j+1}$.  Then $u \cdot v =
  s_{\al_{i_1}} s_{\al_{i_2}} \cdots s_{\al_{i_p}} v$ is a reduced
  product, so $u' = s_{\al_{i_1}} s_{\al_{i_2}} \cdots s_{\al_{i_p}}$
  satisfies the requirements.  Finally, part (f) follows from
  (\ref{eqn:dem_left}).
\end{proof}

\begin{prop}
  Let $u, v \in W$.  The following are equivalent.\smallskip
  
  \noin{\rm(a)} The product $u v$ is reduced.\smallskip

  \noin{\rm(b)} $\ell(u \cdot v) = \ell(u) + \ell(v)$.\smallskip

  \noin{\rm(c)} $u \cdot v = u v$.\smallskip

  \noin{\rm(d)} $I(v) \subset I(u v)$.\smallskip

  \noin{\rm(e)} $I(u) \cap I(v^{-1}) = \emptyset$.
\end{prop}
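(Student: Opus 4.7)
The plan is to prove the equivalence in two blocks, (a)$\Leftrightarrow$(b)$\Leftrightarrow$(c) and (a)$\Leftrightarrow$(d)$\Leftrightarrow$(e), with (a) serving as the hinge.

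The first block follows directly from Proposition~\ref{prop:hecke}. Its part~(d) gives $uv \leq u \cdot v$ in the Bruhat order together with $\ell(u \cdot v) \leq \ell(u) + \ell(v)$. Under (a), the length hypothesis squeezes $\ell(u \cdot v)$ to $\ell(u) + \ell(v)$, proving (b), and then the Bruhat inequality becomes an equality because both sides have the same length, proving (c). Conversely, Proposition~\ref{prop:hecke}(e) supplies $u' = (u \cdot v) v^{-1} \leq u$ such that $u'v = u \cdot v$ is a reduced product; under either (b) or (c) this forces $\ell(u') = \ell(u)$ and hence $u' = u$, from which $uv = u \cdot v$ is reduced, yielding (a).

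The second block is a direct inversion-set computation. Partition the positive roots $\al \in I(uv)$ according to the sign of $v(\al)$. For those with $v(\al) < 0$, set $\be := -v(\al) > 0$; then $uv(\al) < 0$ translates to $\be \notin I(u)$, and as $\al$ runs over $I(v)$, the root $\be$ runs bijectively over $I(v^{-1})$. This yields a bijection $I(v) \cap I(uv) \leftrightarrow I(v^{-1}) \ssm I(u)$. Dually, for $\al \notin I(v)$ the substitution $\be := v(\al)$ gives a bijection $I(uv) \ssm I(v) \leftrightarrow I(u) \ssm I(v^{-1})$. Adding the cardinalities produces the length formula
\[
\ell(uv) \;=\; \ell(u) + \ell(v) - 2\, \bigl|I(u) \cap I(v^{-1})\bigr|,
\]
which makes (a)$\Leftrightarrow$(e) automatic. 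The first bijection also gives $|I(v) \cap I(uv)| = \ell(v) - |I(u) \cap I(v^{-1})|$, so $I(v) \subset I(uv)$ holds iff $I(u) \cap I(v^{-1}) = \emptyset$, establishing (d)$\Leftrightarrow$(e).

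The main source of difficulty is keeping the signs straight in the two bijections, especially verifying that $\be = -v(\al)$ really carries $I(v)$ bijectively onto $I(v^{-1})$ (and noting $\ell(v^{-1}) = \ell(v)$). Once that is checked, no tools beyond Proposition~\ref{prop:hecke} and the definition of inversion sets enter; in particular the argument is entirely internal to the Coxeter group $W$, as befits a basic lemma about the Hecke product.
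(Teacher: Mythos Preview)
Your proof is correct and rests on the same two ingredients as the paper's---Proposition~\ref{prop:hecke} and elementary inversion-set bookkeeping---but is organized differently. The paper simply records the single cycle (a)$\Rightarrow$(b)$\Rightarrow$(c)$\Rightarrow$(d)$\Rightarrow$(e)$\Rightarrow$(a) and declares each step immediate from Proposition~\ref{prop:hecke} and the definitions; in particular it routes (c)$\Rightarrow$(d) through Proposition~\ref{prop:hecke}(f), then handles (d)$\Rightarrow$(e) and (e)$\Rightarrow$(a) by the same inversion-set manipulations you carry out explicitly. Your hub-and-spoke layout with (a) at the center, together with the derivation of the exact length formula $\ell(uv)=\ell(u)+\ell(v)-2\,|I(u)\cap I(v^{-1})|$, makes the inversion-set half self-contained and arguably more informative. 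One small point of care: in your first block you invoke that $u'v$ is a \emph{reduced} product, but the stated conclusion of Proposition~\ref{prop:hecke}(e) only records $u'v = u'\cdot v = u\cdot v$; reducedness of $u'v$ is established in the \emph{proof} of part~(e) (the expression $s_{\al_{i_1}}\cdots s_{\al_{i_p}}v$ is shown to be reduced), so your citation is valid but should point there to avoid any appearance of circularity with the very equivalence (c)$\Leftrightarrow$(a) you are proving. Alternatively you could bypass this by sending (c) through (d) via Proposition~\ref{prop:hecke}(f), as the paper does.
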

\begin{proof}
  All of the implications (a) $\Rightarrow$ (b) $\Rightarrow$ (c)
  $\Rightarrow$ (d) $\Rightarrow$ (e) $\Rightarrow$ (a) follow easily
  from the definitions and Proposition~\ref{prop:hecke}.
\end{proof}

The Hecke product also defines a product $W \times W/W_P \to W/W_P$
given by 
\[
u \cdot (w W_P) = (u \cdot w) W_P \,.
\]
To see that this
is well defined, write $w = w' w''$ with $w' \in W^P$ and $w'' \in
W_P$, and set $v = u \cdot w'$ and $p = v^{-1} (v \cdot w'')$.  Then
$u \cdot w = (u \cdot w') \cdot w'' = v \cdot w'' = v p$, and since $p
\leq w''$ by Proposition~\ref{prop:hecke}(e) we must have $p \in
W_P$.  It follows that $(u \cdot w) W_P = (u \cdot w') W_P$, as
required.

Notice also that for $\be \in \Delta$ we have
\begin{equation}\label{eqn:demP_left}
  s_\be \cdot (w W_P) = \begin{cases}
    s_\be w W_P & \text{if $s_\be w W_P > w W_P$;}\\
    w W_P & \text{if $s_\be w W_P \leq w W_P$.}
  \end{cases}
\end{equation}
In fact, if $s_\be w W_P > w W_P$, then we must have $s_\be w > w$ by
compatibility of the Bruhat orders, and otherwise the inequality $w
W_P \leq (s_\be \cdot w) W_P$ implies that $s_\be w W_P = w W_P$.  The
identity follows from this.

\begin{prop}\label{prop:heckeP}
  For $u, v \in W$ we have $\ell(u \cdot v W_P) \leq \ell(u) + \ell(v
  W_P)$.  Moreover, if $\ell(u \cdot v W_P) = \ell(u) + \ell(v W_P)$
  then we must have $u \cdot v W_P = u v W_P$.
\end{prop}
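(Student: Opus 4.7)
The plan is to reduce everything to the Hecke product of $u$ with the minimal representative of $vW_P$, so that the bound from Proposition~\ref{prop:hecke}(d) applies and the equivalent conditions from the preceding proposition can be invoked in the equality case.

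Write $v = v' v''$ with $v' \in W^P$ the minimal representative of the coset $vW_P$ and $v'' \in W_P$, so that $\ell(vW_P) = \ell(v')$. Since the Hecke product $W \times W/W_P \to W/W_P$ is well defined, $u \cdot vW_P = u \cdot v'W_P$. The length of a coset is the length of its minimal representative, hence $\ell(u \cdot v'W_P) \leq \ell(u \cdot v')$, and Proposition~\ref{prop:hecke}(d) gives $\ell(u \cdot v') \leq \ell(u) + \ell(v')$. Chaining these yields
\[
\ell(u \cdot vW_P) \ \leq \ \ell(u \cdot v') \ \leq \ \ell(u) + \ell(v') \ = \ \ell(u) + \ell(vW_P),
\]
which is the asserted inequality.

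If equality holds throughout, then in particular $\ell(u \cdot v') = \ell(u) + \ell(v')$, and the previous proposition (the implication (b)$\Rightarrow$(c)) forces $u \cdot v' = u v'$. The other forced equality $\ell(u \cdot v'W_P) = \ell(u \cdot v')$ says that $uv'$ is already the minimal representative of its coset. Using $vW_P = v'W_P$, we conclude
\[
u \cdot vW_P \ = \ u \cdot v'W_P \ = \ uv' W_P \ = \ uvW_P,
\]
as required. The argument is essentially bookkeeping on top of Proposition~\ref{prop:hecke}(d) and the equivalences established in the immediately preceding proposition, so I do not anticipate any real obstacle; the only point to be mindful of is to distinguish lengths of group elements from lengths of cosets when stringing together the two inequalities.
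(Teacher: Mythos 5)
Your argument is correct, and it takes a genuinely different route than the paper. The paper's one-line proof cites equation (\ref{eqn:demP_left}), which amounts to an induction on $\ell(u)$: write $u = s_\beta u'$ as a reduced product, apply the displayed identity for $s_\beta \cdot (wW_P)$ at each step, and observe that each step raises the coset length by at most one, with equality forcing the Hecke step to coincide with ordinary multiplication. You instead reduce to the minimal coset representative $v'$ of $vW_P$, chain the elementary inequality $\ell(wW_P)\leq\ell(w)$ with Proposition~\ref{prop:hecke}(d), and then invoke the equivalence (b)$\Leftrightarrow$(c) from the unlabeled proposition preceding the statement to extract $u\cdot v' = uv'$ in the equality case. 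This is a perfectly valid alternative: it trades the induction on $\ell(u)$ for the already-established machinery on $W$ plus the passage $vW_P = v'W_P$, and arguably makes the bookkeeping more transparent. One small remark: the sentence claiming that $uv'$ is the minimal representative of its coset is a true byproduct of your forced equalities, but it is not actually used in the final deduction $u \cdot vW_P = uv'W_P = uvW_P$; you could omit it.
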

\begin{proof}
  This follows from equation (\ref{eqn:demP_left}).
\end{proof}


\section{Combinatorial construction of $z_d$}\label{sec:comb_zd}

\subsection{Complete flag varieties $G/B$}

Given a degree $d \in H_2(G/B) = \Z \Delta^\vee$, the maximal elements
of the set $\{ \al \in R^+ \mid \alv \leq d \}$ are called {\em
  maximal roots\/} of $d$.  The root $\al \in R^+$ is {\em cosmall\/}
if $\al$ is a maximal root of $\alv$.  For example, this holds if
$\al$ is a simple root, a long root, or if $R$ is simply laced.
Notice also that if $\al \in R^+$ is a maximal root of any degree,
then $\al$ is automatically cosmall.

\begin{example}\label{ex:classical}
  If $R$ has one of the classical Lie types $A_{n-1}$, $B_n$, $C_n$, or
  $D_n$, then we can identify $R$ with a subset of $\R^n$ as follows.
  Let $e_1,\dots,e_n$ be the standard basis for $\R^n$ and set $\be_i =
  e_{i+1}-e_i$ for $1 \leq i \leq n-1$.  We also set $\be_0 = e_1$,
  $\wh\be_0 = 2e_1$, and $\be_{-1}=e_2+e_1$.  The following table lists
  the simple, long, short, and cosmall (positive) roots in each type.
  \medskip


\noindent
\begin{tabular}{|l|l|l|l|}
\hline\hline
$\!A_{n\!-\!1}\!\!$ 
& Simple 
& \multicolumn{2}{|l|}{$\be_1,\dots,\be_{n-1}$} 
\\\cline{2-4}
& Long
& $e_j-e_i = \be_i+\be_{i+1}+\dots+\be_{j-1}$ 
& $1 \leq i < j \leq n$ 
\\\cline{2-4}
& Cosmall 
& \multicolumn{2}{|l|}{All positive roots.}
\\
\hline\hline
$B_n$ 
& Simple 
& \multicolumn{2}{|l|}{$\be_0,\be_1,\dots,\be_{n-1}$}
\\\cline{2-4}
& Long
& $e_j-e_i= \be_i+\be_{i+1}+\dots+\be_{j-1}$ 
& $1 \leq i < j \leq n$ 
\\\cline{3-4}
& & $e_j+e_i =$ 
& $1 \leq i < j \leq n$
\\
& & $2\be_0+2\be_1+\dots+2\be_{i-1}+\be_i+\dots+\be_{j-1}$ &
\\\cline{2-4}
& Short 
& $e_i = \be_0+\be_1+\dots+\be_{i-1}$ & $1 \leq i \leq n$ 
\\\cline{2-4}
& Cosmall 
& \multicolumn{2}{|l|}{$e_1$, and $e_j-e_i$ for $1 \leq i < j
  \leq n$, and $e_j+e_i$ for $1 \leq i < j \leq n$.} 
\\
\hline\hline
&&\multicolumn{2}{|l|}{}\vspace{-3.5mm}\\
$C_n$ 
& Simple 
& \multicolumn{2}{|l|}{$\wh\be_0,\be_1,\dots,\be_{n-1}$} 
\\\cline{2-4}
&&\multicolumn{2}{|l|}{}\vspace{-3.5mm}\\
& Long 
& $2e_i = \wh\be_0 + 2\be_1 + \dots + 2\be_{i-1}$ 
& $1 \leq i \leq n$ 
\\\cline{2-4}
& Short 
& $e_j-e_i = \be_i+\be_{i+1}+\dots+\be_{j-1}$ 
& $1 \leq i
< j \leq n$ 
\\\cline{3-4}
& & $e_j+e_i =$ 
& $1 \leq i < j \leq n$ 
\\
&&\multicolumn{2}{|l|}{}\vspace{-4mm}\\
& & $\wh\be_0+2\be_1+\dots+2\be_{i-1}+\be_i+\dots+\be_{j-1}$
&
\\\cline{2-4}
& Cosmall 
& \multicolumn{2}{|l|}{$2e_i$ for $1 \leq i \leq n$, and 
$e_j-e_i$ for $1 \leq i < j \leq n$.} 
\\
\hline\hline
$D_n$ 
& Simple 
& \multicolumn{2}{|l|}{$\be_{-1},\be_1,\dots,\be_{n-1}$} 
\\\cline{2-4}
& Long 
& $e_j-e_i = \be_i+\be_{i+1}+\dots+\be_{j-1}$ 
& $1 \leq i < j \leq n$ 
\\\cline{3-4}
& & $e_j+e_1 = \be_{-1} + \be_2 + \be_3 + \dots + \be_{j-1}$ 
& $2 \leq j \leq n$ 
\\\cline{3-4}
& & $e_j+e_i =$ 
& $2 \leq i < j \leq n$ 
\\
& & $\be_{-1}+\be_1+2\be_2+\dots+2\be_{i-1}+\be_i+\dots+\be_{j-1}$ 
& 
\\\cline{2-4}
& Cosmall 
& \multicolumn{2}{|l|}{All positive roots.} 
\\
\hline\hline
\end{tabular}
\smallskip
\end{example}

\begin{example}\label{ex:cosmall_g2}
  Assume that $R$ has type $G_2$, with $\Delta = \{ \be_1, \be_2 \}$
  where $\be_2$ is the long root.  Then the cosmall roots of $R$
  consist of $\be_1$, $\be_2$, $3\be_1+\be_2$, and $3\be_1+2\be_2$.
\end{example}

\begin{example}\label{ex:cosmall_f4}
  Assume that $R$ has type $F_4$, with $\Delta =
  \{\be_1,\be_2,\be_3,\be_4\}$ and Dynkin diagram 1 --- 2 =$>$= 3 ---
  4.  Then the cosmall roots of $R$ consist of $\be_1$, $\be_2$,
  $\be_3$, $\be_4$, $\be_1+\be_2$, $\be_3+\be_4$, $\be_2+2\be_3$,
  $\be_1+\be_2+2\be_3$, $\be_1+2\be_2+2\be_3$, $\be_2+2\be_3+2\be_4$,
  $\be_1+\be_2+2\be_3+2\be_4$, $\be_1+2\be_2+2\be_3+2\be_4$,
  $\be_1+2\be_2+4\be_3+2\be_4$, $\be_1+3\be_2+4\be_3+2\be_4$, and
  $2\be_1+3\be_2+4\be_3+2\be_4$.
\end{example}

For $\al \in R^+$ we set $\supp(\al) = \{ \be \in \Delta \mid \be \leq
\al \}$.  Two positive roots $\al$ and $\be$ are {\em separated\/} if
$\supp(\al) \cup \supp(\be)$ is a disconnected subset of the Dynkin
diagram.  Equivalently, every root in the support of $\al$ is
perpendicular to every root in the support of $\be$.  Notice that if
$\al$ and $\be$ are separated roots, then $s_\al \cdot s_\be = s_\be
\cdot s_\al = s_\al s_\be$.  Given $d, d' \in \R \Delta$ we let $d
\bigvee d'$ denote the smallest element in $\R \Delta$ that is greater
than or equal to both $d$ and $d'$.

\begin{lemma}\label{lem:uniquemax}\mbox{}

  \noin{\rm(a)} For each $\al \in R^+$ there exists exactly one
  maximal root of $\alv$.\smallskip

  \noin{\rm(b)} If $\al,\be \in R^+$ are non-separated roots, then
  $\al \bigvee \be \in R^+$ is also a root.
\end{lemma}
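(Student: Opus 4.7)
The plan is to prove (b) first using the classification of root systems, then deduce (a) from (b) together with a few additional case-checks. Both parts are most delicate in the non-simply-laced types $B_n, C_n, F_4, G_2$.

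For (b), the non-separation hypothesis means $S := \supp(\alpha) \cup \supp(\beta)$ is a connected subdiagram of the Dynkin diagram, so $\alpha$ and $\beta$ both lie in the irreducible subsystem $R_S$ generated by $S$. I may therefore assume $R$ is irreducible. One preliminary observation is useful: in the ``base case'' $\alpha \wedge \beta = 0$ (disjoint supports), connectedness of $\supp(\alpha) \cup \supp(\beta)$ forces at least one edge between the two supports in the Dynkin diagram, while all other cross-terms vanish by orthogonality of non-adjacent simple roots. Thus $(\alpha,\beta) < 0$, and the standard fact gives $\alpha + \beta \in R^+$, which equals $\alpha \bigvee \beta$. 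For the general case $\alpha \wedge \beta \neq 0$ I proceed type-by-type: in types $A_n, B_n, C_n, D_n$ this is a direct coordinate check using Example~\ref{ex:classical} (for instance, in type $A$, roots correspond to intervals and the join of two non-separated intervals is again an interval, hence a root), while for the exceptional types $E_n, F_4, G_2$ the finite list of positive roots can be inspected directly (as in Examples~\ref{ex:cosmall_g2} and \ref{ex:cosmall_f4}).

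For (a), let $\gamma, \delta$ be two maximal elements of $\Sigma(\alpha^\vee) := \{\tau \in R^+ : \tau^\vee \leq \alpha^\vee\}$. Since $\tau^\vee$ has the same support as $\tau$ up to positive rescaling, $\tau^\vee \leq \alpha^\vee$ forces $\supp(\tau) \subseteq \supp(\alpha)$. I next claim $\supp(\gamma) = \supp(\delta) = \supp(\alpha)$: if some $\be \in \supp(\alpha) \ssm \supp(\gamma)$ were adjacent in the Dynkin diagram to $\supp(\gamma)$, then $(\gamma, \be) < 0$ would yield $\gamma + \be \in R^+$, and a type-by-type check gives $(\gamma+\be)^\vee \leq \alpha^\vee$, contradicting the maximality of $\gamma$. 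Thus $\gamma$ and $\delta$ are non-separated, so by (b) the join $\gamma \bigvee \delta$ is a root. A final case analysis shows $(\gamma \bigvee \delta)^\vee \leq \alpha^\vee$, placing $\gamma \bigvee \delta \in \Sigma(\alpha^\vee)$; maximality of $\gamma$ and $\delta$ then forces $\gamma = \gamma \bigvee \delta = \delta$.

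The hardest step will be the classification-based verifications in (b) together with the companion coroot inequalities used for (a). The subtlety arises because $(\cdot)^\vee$ does not commute with $\bigvee$ when roots of different lengths are involved: showing that $\tau_1^\vee, \tau_2^\vee \leq \alpha^\vee$ implies $(\tau_1 \bigvee \tau_2)^\vee \leq \alpha^\vee$ comes down, after expanding in the simple coroot basis, to the inequality $(\tau_1 \bigvee \tau_2)_\be \cdot (\alpha,\alpha) \leq (\alpha)_\be \cdot (\tau_1 \bigvee \tau_2, \tau_1 \bigvee \tau_2)$ for each $\be$, which genuinely needs the explicit root data of each irreducible type to verify.
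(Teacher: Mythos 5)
Your plan for part (b) (first the disjoint-support case via the standard criterion $(\alpha,\beta)<0\Rightarrow\alpha+\beta\in R$, then the remaining cases by inspection) is sound and essentially matches what the paper does. But your derivation of part (a) from part (b) has a genuine gap in the support step.

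You claim that if $\gamma^\vee\leq\alpha^\vee$ and $\beta\in\supp(\alpha)\ssm\supp(\gamma)$ is adjacent to $\supp(\gamma)$, then $\gamma+\beta\in R^+$ and a type-by-type check yields $(\gamma+\beta)^\vee\leq\alpha^\vee$, contradicting maximality of $\gamma$. The problem is that the inequality $(\gamma+\beta)^\vee\leq\alpha^\vee$ is simply false in general, because passing from roots to coroots can make $\gamma+\beta$ ``bigger.'' Concretely, take $R$ of type $C_2$ with $\Delta=\{\hat\beta_0=2e_1,\ \beta_1=e_2-e_1\}$, and set $\gamma=2e_1$, $\beta=\beta_1$, $\alpha=2e_2$. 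Then $\gamma^\vee=\hat\beta_0^\vee\leq\hat\beta_0^\vee+\beta_1^\vee=\alpha^\vee$, $\beta\in\supp(\alpha)\ssm\supp(\gamma)$, and $\gamma+\beta=e_1+e_2$ is a (short) root; but $(\gamma+\beta)^\vee=e_1+e_2=2\hat\beta_0^\vee+\beta_1^\vee\not\leq\hat\beta_0^\vee+\beta_1^\vee=\alpha^\vee$. So your proposed witness $\gamma+\beta$ fails to lie in $\Sigma(\alpha^\vee)$ at all. (Here $\gamma$ happens not to be maximal, because $2e_2\in\Sigma(\alpha^\vee)$ strictly exceeds it, but that is established by producing a \emph{different} root, not $\gamma+\beta$.) The conclusion you want --- that a maximal element of $\Sigma(\alpha^\vee)$ has full support --- is true, but your mechanism for proving it does not work; and to salvage it you would effectively have to enumerate all the maximal elements anyway, which collapses your route for (a) back into the paper's direct one.

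The paper avoids all this by proving (a) \emph{before} introducing (b): for each classical type it writes down the unique maximal root of $\alpha^\vee$ explicitly (the root $\alpha$ itself when $\alpha$ is long; $e_i+e_{i-1}$ for $\alpha=e_i$ in type $B_n$; $2e_j$ for $\alpha=e_j+e_i$ in type $C_n$), and verifies this by inspection; (b) is then also read off from the table and is not used to prove (a). If you want to keep your deduction ``(a) from (b),'' you need a correct replacement for the support step --- for instance, directly checking from the classification that each short root $\gamma\in\Sigma(\alpha^\vee)$ with proper support is dominated in $\Sigma(\alpha^\vee)$ by some explicitly named root, which in the end is the same amount of case work as the paper's proof.

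Two minor remarks. First, in your part (b) argument for disjoint supports, $\alpha+\beta\in R^+$ is correct, but you should note why $\alpha+\beta$ equals $\alpha\bigvee\beta$: with disjoint supports the componentwise maximum of $\alpha$ and $\beta$ is exactly $\alpha+\beta$. Second, the final step ``$(\gamma\bigvee\delta)^\vee\leq\alpha^\vee$'' has the same structural problem as the support step --- it is only vacuously true once (a) is known, so it cannot be used to prove (a) without an independent (and again classification-dependent) verification.
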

\begin{proof}
  The lemma has been checked case by case when $R$ has exceptional Lie
  type, so we will assume that $R$ has classical type and use the
  notation of Example~\ref{ex:classical}.  Let $\al \in R^+$.  If
  $\al$ is a long root, then $\al$ is the unique maximal root of
  $\alv$, so assume that $\al$ is short.  If $R$ has type $B_n$, then
  $\al = e_i$ for some $i$.  If $i=1$, then $\al$ is the unique
  maximal root of $\alv$, and otherwise the unique maximal root of
  $\alv$ is $e_i+e_{i-1}$.  If $R$ has type $C_n$, then we have either
  $\al = e_j-e_i$ in which case $\al$ is the unique maximal root of
  $\alv$, or $\al = e_j+e_i$ in which case the unique maximal root of
  $\alv$ is $2e_j$.  This proves part (a).  Part (b) follows by
  inspection of the table in Example~\ref{ex:classical}.
\end{proof}

\begin{cor}\label{cor:maxmax}
  If $\al,\be \in R^+$ are non-separated roots and $\al$ is a maximal
  root of $\alv \bigvee \bev$, then $\be \leq \al$.
\end{cor}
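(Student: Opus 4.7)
The strategy is to form the positive root $\gamma := \al \bigvee \be$, whose existence follows from Lemma~\ref{lem:uniquemax}(b) using that $\al$ and $\be$ are non-separated. Since $\gamma \geq \be$, it suffices to prove $\gamma = \al$. As $\gamma \geq \al$ holds automatically, I will aim to show $\gamma \leq \al$.

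Apply Lemma~\ref{lem:uniquemax}(a) to obtain the unique maximal root $\delta \in R^+$ of the degree $\gav$, so that $\dev \leq \gav$ and $\delta \geq \gamma \geq \al$. The plan is to establish the key inequality
\[
\dev \;\leq\; \alv \bigvee \bev \;=:\; d.
\]
Granted this inequality, the hypothesis that $\al$ is a maximal root of $d$ together with $\delta \geq \al$ forces $\delta = \al$; the chain $\al \leq \gamma \leq \delta = \al$ then collapses to $\gamma = \al$, which yields $\be \leq \al$ as required.

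The technical heart of the argument is the inequality $\dev \leq d$. In the simply-laced types ($A_n$, $D_n$, $E_6$, $E_7$, $E_8$), every positive root is cosmall, so $\delta = \gamma$, and in the simple-coroot basis $\gav$ and $d$ have matching components (both equal the componentwise maximum of the coefficient vectors of $\al$ and $\be$); the inequality is then immediate. In the non-simply-laced types ($B_n$, $C_n$, $G_2$, $F_4$), one subdivides by the length of $\gamma$. If $\gamma$ is long, it is automatically cosmall and the same equality-of-coefficients argument applies. If $\gamma$ is short, then $\delta$ is a specific long root covering $\gamma$, which can be read off from the tables in Example~\ref{ex:classical} and the explicit lists in Examples~\ref{ex:cosmall_g2} and~\ref{ex:cosmall_f4}; the jump in squared length from $\gamma$ to $\delta$ compensates precisely for the scaling factor $(\be_i,\be_i)/(\gamma,\gamma)$ that inflated $\gav$ beyond $d$, so that $\dev$ lands within the componentwise bound given by $\alv \bigvee \bev$.

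The main obstacle is this short-$\gamma$ case in the non-simply-laced types, which requires careful bookkeeping of the short/long dichotomy of each simple root against the lengths of $\al$, $\be$, $\gamma$, and $\delta$. However, because the list of cosmall roots and of possible covers of a short root is explicitly finite and controlled by Examples~\ref{ex:classical}--\ref{ex:cosmall_f4}, the verification reduces to a bounded casework. Once $\dev \leq d$ is in hand, the maximality argument sketched above concludes the proof in a type-independent fashion.
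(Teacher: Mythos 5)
Your argument is a genuine departure from the paper, and as written it has a gap at its crucial step.

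The paper's proof is much shorter and hinges on applying Lemma~\ref{lem:uniquemax}(b) to the \emph{coroots} directly: since $\al$ and $\be$ are non-separated in $R$, the coroots $\alv$ and $\bev$ are non-separated in the dual root system $R^\vee$ (they have the same support in the Dynkin diagram), so $\alv \bigvee \bev$ is itself a coroot $\gav$. Lemma~\ref{lem:uniquemax}(a) then says $\gav$ has a unique maximal root, which by hypothesis must be $\al$; since $\bev \leq \gav$, the root $\be$ lies in the set whose unique maximal element is $\al$, giving $\be \leq \al$ immediately. You instead form the root join $\gamma = \al \bigvee \be$ and chase it through an intermediary $\delta$ (the maximal root of $\gav$), which forces you to relate $\dev$ back to $\alv \bigvee \bev$ --- exactly the bridge the paper avoids by never leaving the coroot lattice.

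That bridge, namely the inequality $\dev \leq \alv \bigvee \bev$, is where your proposal has a gap. You do not prove it, and the sketch you give for the long-$\gamma$ case in the non-simply-laced types is incorrect: you claim that when $\gamma$ is long, $\gav$ and $d = \alv \bigvee \bev$ ``have matching components,'' but this fails as soon as $\al$ and $\be$ have different lengths. For instance in $B_3$ with $\al = e_3$ (short) and $\be = e_2 + e_1$ (long), one has $\gamma = e_3 + e_1$ long with $\gav = (1,1,1)$ in the simple-coroot basis, while $\alv \bigvee \bev = (1,2,2)$; the components do not match (the inequality happens to hold here, but not by your argument). The short-$\gamma$ case you acknowledge as open. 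So the technical heart of the proof is left as an unfinished finite casework whose simply-stated shortcut is wrong in at least one branch. The paper's route --- recognizing that the coroot join is a coroot and invoking uniqueness of the maximal root --- sidesteps all of this.
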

\begin{proof}
  It follows from Lemma~\ref{lem:uniquemax}(b) that $\alv \bigvee
  \bev = \gav$ for some root $\ga \in R^+$, after which
  Lemma~\ref{lem:uniquemax}(a) implies that $\al$ is the only maximal
  root of $\alv \bigvee \bev$.  It follows that $\be \leq \al$.
\end{proof}

\begin{defn}
  Given an effective degree $d \in \Z \Delta^\vee$, we define an
  element $z_d \in W$ as follows.  If $d=0$ then set $z_d=1$.
  Otherwise we set $z_d = s_\al \cdot z_{d-\alv}$ where $\al$ is any
  maximal root of $d$.
\end{defn}

We prove that this is well defined by induction on $d$.  If $\al$ and
$\be$ are distinct maximal roots of $d$, then
Corollary~\ref{cor:maxmax} implies that $\al$ and $\be$ are separated.
It follows that $\al$ is a maximal root of $d-\bev$ and $\be$ is a
maximal root of $d-\alv$, so we obtain $s_\be \cdot z_{d-\bev} = s_\be
\cdot s_\al \cdot s_{d-\bev-\alv} = s_\al \cdot s_\be \cdot
s_{d-\alv-\bev} = s_\al \cdot z_{d-\alv}$, as required.

\begin{lemma}\label{lem:cover}
  Let $\al, \be \in R^+$ be cosmall roots.

  \noin{\em(a)} We have $\al \leq \be$ if and only if $\al^\vee \leq
  \be^\vee$.

  \noin{\em(b)} If $\al < \be$, then there exists a cosmall root $\ga$
  such that $\al < \ga \leq \be$ and $\gav-\alv$ is a simple coroot.
\end{lemma}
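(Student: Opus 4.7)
The plan is to prove both parts by a case analysis on the irreducible type of $R$, using the classification together with the explicit descriptions of cosmall roots in Example~\ref{ex:classical} and Examples~\ref{ex:cosmall_g2}--\ref{ex:cosmall_f4}. Since roots lying in distinct irreducible components of $R$ are automatically separated and contribute independently to both claims, I may assume that $R$ is irreducible. The reverse implication of part~(a) is immediate and type-independent: if $\alv \leq \bev$, then Lemma~\ref{lem:uniquemax}(a) identifies $\be$ as the unique maximal positive root whose coroot is $\leq \bev$, which yields $\al \leq \be$ at once. Only the forward implication of~(a) and all of~(b) require type-by-type work.

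In simply-laced types ($A_n$, $D_n$, $E_6$, $E_7$, $E_8$) every positive root is cosmall, and $\al \mapsto \alv$ coincides up to rescaling with the identity on $\R\Delta$, so~(a) is trivial. For~(b) in this setting, the standard fact that any two positive roots $\al < \be$ are joined by a chain in $R^+$ obtained by adding one simple root at each step (see \cite[Ch.~VI.1.6]{bourbaki:lie*1}) produces a cover $\al < \ga \leq \be$ with $\ga - \al$ a simple root, which is also a simple coroot. For the exceptional non-simply-laced types $G_2$ and $F_4$, the lists of cosmall roots in Examples~\ref{ex:cosmall_g2} and \ref{ex:cosmall_f4} are short enough that both claims can be verified by direct inspection of the finite partial orders involved.

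For types $B_n$ and $C_n$ I work from the explicit descriptions in Example~\ref{ex:classical}. For~(a), I expand $\al$ and $\be$ in simple roots and $\alv$, $\bev$ in simple coroots (using $\be_0^\vee = 2e_1$ in $B_n$ and $\wh\be_0^\vee = e_1$ in $C_n$) and compare coefficients; for cosmall roots, the two orderings $\al \leq \be$ and $\alv \leq \bev$ then turn out to be controlled by exactly the same inequalities on the index labels $i, j$. For~(b), given cosmall $\al < \be$ I exhibit the required intermediate cosmall root $\ga$ explicitly; for instance in $B_n$ with $\al = e_1$ and $\be = e_j + e_i$, $i \geq 2$, one takes $\ga = e_2 + e_1$, giving $\gav - \alv = \be_1^\vee$.

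The main obstacle is the interaction between roots of different lengths, since the coroot map rescales short and long roots differently. This remains manageable because in $B_n$ the only cosmall short root is $e_1$ and in $C_n$ the cosmall short roots are precisely the $e_j - e_i$, so the mixed-length configurations appearing in~(a) and~(b) reduce to a short and explicit enumeration.
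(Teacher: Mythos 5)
Your proposal is correct and follows essentially the same strategy as the paper: reduce to the classification of irreducible root systems, settle the exceptional types by inspecting the finite lists of cosmall roots (Examples~\ref{ex:cosmall_g2} and~\ref{ex:cosmall_f4}), and handle the classical types via the explicit $e_i$-coordinates of Example~\ref{ex:classical}; and the backward direction of~(a) is handled identically, via Lemma~\ref{lem:uniquemax}(a). The only genuine structural difference is how you interleave~(a) and~(b): the paper proves~(b) first (by noting from the tables that every covering relation in the poset of cosmall roots already has coroot difference a simple coroot) and then obtains the forward direction of~(a) as a formal consequence of~(b), whereas you prove the forward direction of~(a) independently by comparing the coefficient vectors of $\al,\be$ in the simple roots and of $\alv,\bev$ in the simple coroots and observing that for cosmall roots the two systems of inequalities coincide. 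Your route is slightly more laborious but equally sound, and your appeal to the Bourbaki chain lemma to dispose of the simply-laced case of~(b) (where every positive root is cosmall and the coroot map is an isometry) is a clean substitute for table inspection there. The sample computation you display in $B_n$ ($\al=e_1$, $\be=e_j+e_i$, $\ga=e_2+e_1$) is correct, and the observation that the only cosmall short roots are $e_1$ in $B_n$ and the $e_j-e_i$ in $C_n$ is exactly what keeps the remaining enumeration short.
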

\begin{proof}
  The lemma has been checked case by case when $R$ has exceptional Lie
  type, so we will assume that $R$ has classical type.  It follows by
  inspection of Example~\ref{ex:classical} that, if $\al < \be$ is a
  covering relation in the partially ordered set of cosmall roots in
  $R^+$, i.e.\ no cosmall root is strictly in between $\al$ and $\be$,
  then $\bev-\alv$ is a simple coroot.  This proves part (b), which in
  turn shows that $\al\leq\be$ implies $\alv\leq\bev$.  On the other
  hand, if $\alv\leq\bev$, then since Lemma~\ref{lem:uniquemax}(a)
  implies that $\be$ is the unique maximal root of $\bev$, we obtain
  $\al \leq \be$.
\end{proof}

\begin{prop}\label{prop:commute}
  Let $\al, \be \in R^+$.\smallskip
  
  \noin{\rm(a)} Assume that $\be \in \Delta$.  Then we have
  $s_\al\cdot s_\be = s_\be\cdot s_\al$ if and only if $(\al,\be) \geq
  0$.\smallskip

  \noin{\rm(b)} If $\al$ is a maximal root of $\alv+\bev$, then
  $s_\al\cdot s_\be = s_\be\cdot s_\al$.
\end{prop}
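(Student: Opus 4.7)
For part (a) I would split on the sign of $(\al, \be)$. When $(\al, \be) = 0$ the two reflections commute in $W$ and both Hecke products collapse to the common group product. When $(\al, \be) < 0$ the root $s_\al(\be) = \be - (\be, \alv)\al$ is positive (a positive multiple of $\al$ is added to $\be$), so by (\ref{eqn:dem_right}) one has $s_\al \cdot s_\be = s_\al s_\be$ and symmetrically $s_\be \cdot s_\al = s_\be s_\al$, and these differ as group elements since $s_\al, s_\be$ do not commute. When $(\al, \be) > 0$ I would observe that $\be$ must lie in $\supp(\al)$ (otherwise $\be$ is perpendicular to $\supp(\al)$), so the coefficient of $\be$ in the simple-root expansion of $\al$ is at least $1$; combined with $(\be, \alv) \geq 1$ this forces $s_\al(\be) = \be - (\be, \alv)\al \leq 0$, so $s_\al s_\be < s_\al$ in length and by (\ref{eqn:dem_right}) and (\ref{eqn:dem_left}) both Hecke products collapse to $s_\al$.

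For part (b) I would prove it by induction on $\height(\be)$, granting the following sub-lemma: if $\al$ is a maximal root of $\alv + \bev$ and $\ga \in \Delta$ satisfies $(\be, \gav) > 0$, then $(\al, \ga) \geq 0$. The base case $\be \in \Delta$ applies the sub-lemma with $\ga = \be$ to get $(\al, \be) \geq 0$, and part (a) concludes. For the inductive step with $\be$ non-simple, pick a simple $\ga$ with $(\be, \gav) > 0$ (which exists since $\be$ is a positive root) and set $\be' = s_\ga(\be) \in R^+$; then $\height(\be') < \height(\be)$. I would verify three ingredients: (i) $\al$ remains a maximal root of $\alv + (\be')^\vee$ since $(\be')^\vee < \bev$, so by induction $s_\al \cdot s_{\be'} = s_{\be'} \cdot s_\al$; (ii) the sub-lemma and (a) give $s_\al \cdot s_\ga = s_\ga \cdot s_\al$; (iii) the expression $s_\ga s_{\be'} s_\ga$ is reduced for $s_\be$, checked by the direct computations $s_{\be'}(\ga) = \ga + (\bev, \ga)\be' > 0$ and $(s_\ga s_{\be'})(\ga) = (\bev, \ga)\be' + ((\bev, \ga)(\be, \gav) - 1)\ga > 0$, where the coefficient of $\ga$ is non-negative because the product of Cartan integers $(\bev, \ga)(\be, \gav)$ is at least $1$ for the non-parallel positive roots $\be, \ga$. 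Sliding $s_\al$ through the reduced expression using associativity then gives $s_\al \cdot s_\be = s_\be \cdot s_\al$.

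The main obstacle is the sub-lemma. I would first observe that $\al$ is cosmall: any cosmall root $\al^* > \al$ with $(\al^*)^\vee \leq \alv$ would give $(\al^*)^\vee \leq \alv + \bev$ and contradict the maximality hypothesis, so $\al$ itself is the unique maximal root of $\alv$ by Lemma~\ref{lem:uniquemax}(a). Next, expanding $\bev$ in simple coroots and computing $(\bev, \ga) = 2 c_{\gav}(\bev) + \sum_{\de \neq \ga} c_{\dev}(\bev)(\dev, \ga)$ with the tail non-positive, the hypothesis $(\be, \gav) > 0$ forces $c_{\gav}(\bev) \geq 1$, i.e.\ $\gav \leq \bev$ in the coroot order. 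Now suppose for contradiction that $(\al, \ga) < 0$. Then $s_\ga(\al) > \al$ is a positive root with coroot $\alv + k\gav$ where $k = -(\alv, \ga) \geq 1$. Let $\al^{**}$ denote the unique maximal root of this coroot (cosmall by Lemma~\ref{lem:uniquemax}(a)); it satisfies $\al^{**} \geq s_\ga(\al) > \al$. By Lemma~\ref{lem:cover}(b) applied to the cosmall pair $\al < \al^{**}$ there is a cosmall $\al_1$ with $\al < \al_1 \leq \al^{**}$ and $(\al_1)^\vee - \alv$ a simple coroot; the containment $(\al_1)^\vee \leq \alv + k\gav$ forces this simple coroot to be $\gav$. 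Therefore $(\al_1)^\vee = \alv + \gav \leq \alv + \bev$, contradicting the maximality of $\al$. The key insight is that Lemma~\ref{lem:cover}(b) converts the potentially unreachable bound $k\gav \leq \bev$ into the one-step cover bound $\gav \leq \bev$, which is exactly what $(\be, \gav) > 0$ supplies.
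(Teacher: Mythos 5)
Your proof is correct, and while the core contradiction is the same as the paper's, your reduction to the rank-one case is genuinely different. The paper handles part~(b) by writing $s_\be$ as a reduced product of simple reflections $s_\ga$ with $\ga\in\supp(\be)$ and observing (implicitly, since the coefficients of $\bev$ in the simple-coroot basis are positive integers supported on $\supp(\be)$) that $\al$ is a maximal root of $\alv+\gav$ for every such $\ga$; then sliding $s_\al$ through by associativity. You instead run an induction on $\height(\be)$, conjugating by a single simple reflection $s_\ga$ with $(\be,\gav)>0$, and you have to pay for this choice by verifying that $s_\ga s_{\be'} s_\ga$ is a length-additive factorization of $s_\be$ and that $\al$ stays maximal for $\alv+(\be')^\vee$. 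The heart of the matter---suppose $(\al,\ga)<0$, pass to $s_\ga(\al)$, use Lemma~\ref{lem:cover}(b) to manufacture a cosmall $\al_1>\al$ with $\al_1^\vee=\alv+\gav\leq\alv+\bev$, contradicting maximality---is identical to the paper's argument for $\be\in\Delta$. Your version has one virtue: it makes explicit that $\al$ is necessarily cosmall (needed to apply Lemma~\ref{lem:cover}), a fact the paper uses without comment. The paper's reduction is shorter because it sidesteps the reducedness bookkeeping, but both are sound; you may want to note, as the paper implicitly does, that $\gav\leq\bev$ actually holds for \emph{every} $\ga\in\supp(\be)$, which would let you drop the height induction entirely in favor of a direct product over $\supp(\be)$.
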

\begin{proof}
  Assume that $\be$ is a simple root.  Then we have $s_\al \cdot s_\be
  = s_\be \cdot s_\al$ if and only $s_\al s_\be = s_\be s_\al$ or
  $\ell(s_\al s_\be) < \ell(s_\al)$.  The inequality $\ell(s_\al
  s_\be) < \ell(s_\al)$ is equivalent to $(\al,\be) > 0$, and the
  identity $s_\al s_\be = s_\be s_\al$ holds if and only if $\al=\be$
  or $(\al,\be)=0$.  Part (a) follows from this.

  Now let $\al,\be \in R^+$ be arbitrary positive roots such that
  $\al$ is a maximal root of $\alv+\bev$.  We must show that $s_\al
  \cdot s_\be = s_\be \cdot s_\al$.  The assumptions imply that $\al$
  is a maximal root of $\alv+\gav$ for all $\ga \in \supp(\be)$.
  Since $s_\be$ is a product of simple reflections $s_\ga$ for $\ga
  \in \supp(\be)$, we may assume that $\be$ is a simple root.

  Assume that $(\al,\be) < 0$, and set $\de = s_\be(\al)$.  Then $\de
  = \al - (\al,\bev)\be > \al$ and $\dev = \alv - (\be,\alv)\bev >
  \alv$.  Let $\ga' \geq \de$ be a maximal root of $\dev$.  Since $\al
  < \ga'$, it follows from Lemma~\ref{lem:cover} that there exists a
  cosmall root $\ga$ such that $\al < \ga \leq \ga'$ and $\gav - \alv$
  is a simple coroot.  Since we also have $\gav \leq {\ga'}^\vee \leq
  \dev$, we must have $\gav - \alv = \bev$, which contradicts that
  $\al$ is a maximal root of $\alv+\bev$.  We conclude that $(\al,\be)
  \geq 0$, so part (b) follows from part (a).
\end{proof}

Let $d \in \Z \Delta^\vee$ be an effective degree.  A {\em greedy
  decomposition\/} of $d$ is a sequence $(\al_1,\al_2,\dots,\al_k)$ of
positive roots satisfying the recursive condition that $\al_1$ is a
maximal root of $d$, and $(\al_2,\dots,\al_k)$ is a greedy
decomposition of $d - \al_1^\vee$.  The empty sequence is the only
greedy decomposition of the degree $0 \in \Z \Delta^\vee$.  If
$(\al_1,\dots,\al_k)$ is a greedy decomposition of $d$, then it
follows from the definition that $z_d = s_{\al_1} \cdot s_{\al_2}
\cdot \ldots \cdot s_{\al_k}$.  Furthermore, it follows from
Proposition~\ref{prop:commute}(b) that $s_{\al_i} \cdot s_{\al_j} =
s_{\al_j} \cdot s_{\al_i}$ for all $1 \leq i,j \leq k$.  We record the
following consequence.

\begin{cor}\label{cor:zd_inverse}
  For any effective degree $d \in \Z \Delta^\vee$ we have $(z_d)^{-1}
  = z_d$.
\end{cor}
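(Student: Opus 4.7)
The plan is to combine the greedy decomposition description of $z_d$ with the pairwise commutativity of its factors, and then use the behavior of inverses under the Hecke product recorded in Proposition~\ref{prop:hecke}(b).

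Concretely, I would start by fixing a greedy decomposition $(\al_1,\al_2,\dots,\al_k)$ of $d$, so that
\[
z_d = s_{\al_1} \cdot s_{\al_2} \cdot \ldots \cdot s_{\al_k}
\]
by the remark preceding the corollary. The discussion right before the statement also recalls that Proposition~\ref{prop:commute}(b) forces $s_{\al_i} \cdot s_{\al_j} = s_{\al_j} \cdot s_{\al_i}$ for all $1 \leq i, j \leq k$: the root $\al_i$ is a maximal root of $\al_i^\vee + \al_j^\vee$ whenever $i < j$, since $\al_i$ is already a maximal root of the larger degree $d - (\al_1^\vee + \dots + \al_{i-1}^\vee) \geq \al_i^\vee + \al_j^\vee$, and then Proposition~\ref{prop:commute}(b) applies.

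Next, I would apply Proposition~\ref{prop:hecke}(b) iteratively to compute
\[
(z_d)^{-1} = (s_{\al_1} \cdot s_{\al_2} \cdot \ldots \cdot s_{\al_k})^{-1} = s_{\al_k}^{-1} \cdot s_{\al_{k-1}}^{-1} \cdot \ldots \cdot s_{\al_1}^{-1} = s_{\al_k} \cdot s_{\al_{k-1}} \cdot \ldots \cdot s_{\al_1},
\]
using that every reflection is an involution. The pairwise commutativity established above then allows one to reorder the factors of this Hecke product back into their original order, yielding
\[
(z_d)^{-1} = s_{\al_1} \cdot s_{\al_2} \cdot \ldots \cdot s_{\al_k} = z_d,
\]
which is the claim.

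There is essentially no obstacle here; the statement is a formal consequence of two facts already in hand. The only point that requires a sentence of justification is the commutativity claim, which in turn rests on the observation (made just above) that the greedy decomposition selects each $\al_i$ as a maximal root of a degree dominating $\al_i^\vee + \al_j^\vee$ for every later index $j$. Once that is noted, the antihomomorphism property of the inverse together with commutativity gives the corollary in a few lines.
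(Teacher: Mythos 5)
Your proof is correct and matches the paper's intended argument: the corollary is recorded immediately after the paper observes (via Proposition~\ref{prop:commute}(b)) that the Hecke factors $s_{\al_i}$ in $z_d = s_{\al_1}\cdot\ldots\cdot s_{\al_k}$ pairwise commute, and the claim then follows from Proposition~\ref{prop:hecke}(b) exactly as you write. Your added justification that $\al_i$ is a maximal root of $\al_i^\vee + \al_j^\vee$ for $j>i$ (since $\al_i^\vee + \al_j^\vee \leq d - (\al_1^\vee+\dots+\al_{i-1}^\vee)$) correctly fills in the one step the paper leaves implicit.
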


Notice also that if $(\al_1,\dots,\al_k)$ and $(\be_1,\dots,\be_l)$
are greedy decompositions of the same degree $d$, then these
decompositions are equal up to reordering.  To see this, notice that
if $\al_1 \neq \be_1$, then Corollary~\ref{cor:maxmax} shows that that
$\be_1$ is a maximal root of $d - \al_1^\vee$ and $\al_1$ is a maximal
root of $d - \be_1^\vee$.  Let $(\ga_1,\dots,\ga_p)$ be a greedy
decomposition of $d - \al_1^\vee - \be_1^\vee$.  Then
$(\al_2,\dots,\al_k)$ and $(\be_1,\ga_1,\dots,\ga_p)$ are both greedy
decompositions of $d-\al_1^\vee$, and $(\be_2,\dots,\be_l)$ and
$(\al_1,\ga_1,\dots,\ga_p)$ are both greedy decompositions of
$d-\be_1^\vee$.  It therefore follows by induction on $d$ that all of
the sequences $(\al_1,\dots,\al_k)$,
$(\al_1,\be_1,\ga_1,\dots,\ga_p)$, and $(\be_1,\dots,\be_l)$ are
reorderings of each other.

Our proof of the following lemma relies heavily on the classification
of root systems.  It would be very interesting to find a type
independent proof.

\begin{lemma}\label{lem:replace}\mbox{}\smallskip
  
  \noin{\rm(a)} If $\al \in R^+$ is not cosmall, then $s_\al <
  z_{\alv}$.\smallskip
  
  \noin{\rm(b)} If $\al$ and $\be$ are cosmall roots, then $s_\al
  \cdot s_\be \leq z_{\alv+\bev}$.
\end{lemma}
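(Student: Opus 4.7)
The plan is to follow the strategy indicated by the authors and give a case-by-case verification using the classification of root systems, drawing on the explicit lists of cosmall roots in Examples~\ref{ex:classical}, \ref{ex:cosmall_g2}, and \ref{ex:cosmall_f4}.

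For part (a), the first step is to note that a non-cosmall positive root can occur only in a non-simply laced type and is necessarily a short root. By Lemma~\ref{lem:uniquemax}(a), the unique maximal root $\be$ of $\alv$ is then cosmall and strictly greater than $\al$ in the root order, and since the coroot map on $R$ is injective one has $\bev<\alv$ strictly, so $\alv-\bev$ is a nonzero effective coroot and $z_{\alv}=s_\be\cdot z_{\alv-\bev}$. I would then enumerate the non-cosmall roots in each classical type ($\al=e_i$ for $i\geq 2$ in type $B_n$, and $\al=e_j+e_i$ in type $C_n$) and in each exceptional type, and in every instance identify an explicit reduced expression for $z_{\alv}$ that contains a reduced expression for $s_\al$ as a proper subword, thereby establishing $s_\al<z_{\alv}$ in the Bruhat order. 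In the classical cases the computation is short because $\alv-\bev$ is itself the coroot of a single cosmall root, so $z_{\alv-\bev}$ is a single reflection; moreover $\be$ is a maximal root of $\bev+(\alv-\bev)$, and Proposition~\ref{prop:commute}(b) shows that the two reflections occurring in $z_{\alv}$ commute as Hecke factors, which makes the subword check entirely mechanical.

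For part (b), I would split into two cases according to whether $\al$ and $\be$ are separated. If they are separated, then $s_\al$ and $s_\be$ commute, both $\al$ and $\be$ are maximal roots of $\alv+\bev$, and applying the greedy algorithm with the decomposition $(\al,\be)$ shows that $z_{\alv+\bev}=s_\al\cdot s_\be$, so equality holds in the lemma. If $\al$ and $\be$ are not separated, then Lemma~\ref{lem:uniquemax}(b) produces a positive root $\ga$ with $\gav=\alv+\bev$, and Lemma~\ref{lem:uniquemax}(a) gives a unique maximal root $\delta\geq\ga$ of $\gav$; writing $z_{\alv+\bev}=s_\delta\cdot z_{\alv+\bev-\dev}$ and iterating, I would enumerate the non-separated pairs of cosmall roots in each Lie type and verify the inequality $s_\al\cdot s_\be\leq z_{\alv+\bev}$ by direct manipulation of reduced expressions, using Proposition~\ref{prop:hecke}(c) to compare Hecke products piece by piece and the commutation relations from Proposition~\ref{prop:commute} to rearrange factors when needed.

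The main obstacle will be the classification-driven enumeration in part (b) for non-separated pairs of cosmall roots, particularly in the exceptional types where Example~\ref{ex:cosmall_f4} lists fifteen cosmall roots in $F_4$ and hence produces many pairs. A systematic simplification is to group pairs by $\supp(\al)\cup\supp(\be)$ and carry out each verification inside the parabolic subgroup of $W$ generated by the corresponding simple reflections; this reduces every $F_4$ computation to a problem in rank at most four (and often in rank two or three), and the $G_2$ analysis to a short computation in the twelve-element dihedral group $W(G_2)$. In practice one expects the stronger statement $s_\al\cdot s_\be=z_{\alv+\bev}$ to hold in most of these cases, which keeps the bookkeeping manageable and also foreshadows the combinatorial construction of $z_d^P$ in the rest of the section.
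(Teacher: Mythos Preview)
Your overall strategy---case-by-case verification via the classification, with explicit reduced-word computations in the classical types and a finite check in the exceptional ones---is exactly what the paper does, and your treatment of part~(a) matches the paper's closely (the paper handles precisely the two classical families you name, $\al=e_i$ for $i\geq 2$ in $B_n$ and $\al=e_j+e_i$ in $C_n$, with short explicit chains of Hecke identities).

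There are, however, two points in your outline for part~(b) that would cause trouble. First, your invocation of Lemma~\ref{lem:uniquemax}(b) is a misreading: that lemma asserts that the \emph{join} $\al\bigvee\be$ in the root order is a root, not that there exists a root $\ga$ with $\gav=\alv+\bev$. In general $\alv+\bev$ is not a coroot at all (e.g.\ in type $A_2$ with $\al=\be_1$ and $\be=\be_1+\be_2$), so the element $z_{\alv+\bev}$ must be computed from its greedy decomposition directly, not via a single root $\ga$. Second, your expectation that $s_\al\cdot s_\be = z_{\alv+\bev}$ holds in most non-separated cases is too optimistic: the paper's proof shows that a strict inequality is the norm, and essentially every one of its five main cases (with roughly fifteen subcases total) requires an explicit Bruhat-order inequality established by inserting an extra simple reflection into a carefully rewritten Hecke product. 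The bookkeeping is therefore heavier than your outline suggests; the paper introduces auxiliary elements $\tau_{i,j}$, $\nu_{i,j}$, $\sigma_k$, $u_{i,j}$, $d_{i,j}$ precisely to manage these manipulations systematically in the classical types, and several subcases (notably those with $\al=e_j+e_i$ in types $B_n$ and $D_n$) rely on earlier subcases as lemmas.
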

\begin{proof}[Proof of Lemma~\ref{lem:replace}]
  The lemma has been checked case by case when $R$ has exceptional Lie
  type, so we will assume that $R$ has classical type.  We will use
  the notation of Example~\ref{ex:classical} for the roots in $R$.
  The coroots are given by $(e_j-e_i)^\vee = e_j-e_i$, $(e_j+e_i)^\vee
  = e_j+e_i$, $(e_i)^\vee = 2e_i$, and $(2e_i)^\vee = e_i$.

  The Weyl group $W$ is a subgroup of $\Aut(\R^n)$.  For $-1 \leq i
  \leq n-1$ we set $s_i = s_{\be_i} \in \Aut(\R^n)$.  Depending on the
  Lie type of $R$, these reflections may or may not be elements of
  $W$.  We also set $\sigma_k = s_{e_k}$ for $1 \leq k \leq n$, and
  set $\nu_{i,j} = s_{e_j+e_i}$, $\tau_{i,j} = s_{e_j-e_i}$, $u_{i,j}
  = s_i s_{i+1} \cdots s_{j-1}$, and $d_{i,j} = s_{j-1} s_{j-2} \cdots
  s_i$ for $1 \leq i < j \leq n$.  To reduce the number of special
  cases, we furthermore set $\tau_{i,j} = u_{i,j} = d_{i,j} = 1$ for
  $i \geq j$.  Notice that the Hecke products of these elements depend
  on the Lie type of $R$.  For example, if $R$ has type $D_n$, then
  $s_{-1}$ is a simple reflection and $s_{-1}\cdot s_{-1} = s_{-1}$,
  whereas $s_{-1}\cdot s_{-1} = s_0 s_1 s_0 s_1 = s_{-1} s_1$ if $R$
  has type $B_n$.

  Assume that $\al \in R^+$ is not cosmall.  Then the root system $R$
  is not simply laced.  If $R$ has type $B_n$, then $\al=e_i$ where $2
  \leq i \leq n$, the greedy decomposition of $\alv$ is $(e_i+e_{i-1},
  e_i-e_{i-1})$, and $s_\al = \sigma_i = s_{i-1} \cdot \sigma_{i-1}
  \cdot s_{i-1} < \sigma_{i-1} \cdot s_{i-1} \cdot \sigma_{i-1} \cdot
  s_{i-1} = \nu_{i-1,i} \cdot s_{i-1} = z_{\alv}$.  If $R$ has type
  $C_n$, then $\al = e_j+e_i$ where $1 \leq i < j \leq n$, the greedy
  decomposition of $\alv$ is $(2e_j, 2e_i)$, and $s_\al = \nu_{i,j} =
  \sigma_i \cdot \tau_{i,j} \cdot \sigma_i = \sigma_i \cdot d_{i+1,j}
  \cdot u_{i,j} \cdot \sigma_i = d_{i+1,j} \cdot \sigma_i \cdot
  u_{i,j} \cdot \sigma_i < d_{i,j} \cdot \sigma_i \cdot u_{i,j} \cdot
  \sigma_i = \sigma_j \cdot \sigma_i = z_{\alv}$.  This proves part
  (a).

  Now let $\al, \be \in R^+$ be cosmall roots.  We must prove the
  inequality $s_\al \cdot s_\be \leq z_{\alv+\bev}$ in the Bruhat
  order of $W$.  Notice that this implies that $s_\be \cdot s_\al =
  (s_\al \cdot s_\be)^{-1} \leq (z_{\alv+\bev})^{-1} = z_{\alv+\bev}$,
  hence we are free to interchange $\al$ and $\be$.  We may assume
  that $\al$ (and $\be$) is not a maximal root of $\alv+\bev$, since
  otherwise $(\al,\be)$ is a greedy decomposition and there is nothing
  to prove.  In what follows we will use the convention that zero
  vectors should be omitted in any specification of a greedy
  decomposition.  We consider five main cases.  All commutations of
  factors in the identities below follow from
  Proposition~\ref{prop:commute}.\smallskip
  
  \noin{\bf Case 1}: Assume that $\al = e_k-e_i$ and $\be = e_l-e_j$
  for some $i<k$ and $j<l$.  Up to interchanging $\al$ and $\be$, the
  assumption that $\al$ and $\be$ are not maximal roots of $\alv+\bev$
  implies that $i < j < k < l$.  In this case the greedy decomposition
  of $\alv+\bev$ is $(e_{l}-e_{i} , e_{k}-e_{j})$, and $s_\al \cdot
  s_\be \leq z_{\alv+\bev}$ holds because
  \[
  \begin{split}
    \tau_{j,l}\cdot \tau_{i,k} &=
    \tau_{j,l}\cdot d_{j,k}\cdot \tau_{i,j}\cdot u_{j,k} =
    \tau_{j,l}\cdot d_{j+1,k}\cdot \tau_{i,j}\cdot u_{j,k} =
    \tau_{j,l}\cdot \tau_{i,j}\cdot d_{j+1,k}\cdot u_{j,k} \\ &=
    \tau_{j,l}\cdot \tau_{i,j}\cdot \tau_{j,k} =
    \tau_{j,l}\cdot u_{i,j-1}\cdot d_{i,j}\cdot \tau_{j,k} =
    u_{i,j-1}\cdot \tau_{j,l}\cdot d_{i,j}\cdot \tau_{j,k} \\ &\leq
    u_{i,j}\cdot \tau_{j,l}\cdot d_{i,j}\cdot \tau_{j,k} =
    \tau_{i,l}\cdot \tau_{j,k} \,.
  \end{split}
  \]
  
  \noin{\bf Case 2}: Assume that $\al = 2e_j$ for some $j$.  Then $R$
  has type $C_n$, and since $\al$ and $\be$ are not maximal roots of
  $\alv+\bev$ we must have $\be = e_k-e_i$ where $i \leq j \leq k$.
  The greedy decomposition of $\alv+\bev$ is $(2e_{k} , e_{j}-e_{i})$;
  as mentioned above, the vector $e_j-e_i$ is omitted if $i=j$.  The
  inequality $s_\al \cdot s_\be \leq z_{\alv+\bev}$ holds because
  \[
  \begin{split}
    \sigma_{j}\cdot \tau_{i,k} &=
    d_{i,j}\cdot \sigma_{i}\cdot u_{i,j}\cdot \tau_{i,k} =
    d_{i,j}\cdot \sigma_{i}\cdot u_{i+1,j}\cdot \tau_{i,k}  =
    d_{i,j}\cdot u_{i+1,j}\cdot \sigma_{i}\cdot \tau_{i,k} \\ &=
    \tau_{i,j}\cdot \sigma_{i}\cdot \tau_{i,k} =
    \tau_{i,j}\cdot \sigma_{i}\cdot d_{i+1,k}\cdot u_{i,k} =
    \tau_{i,j}\cdot d_{i+1,k}\cdot \sigma_{i}\cdot u_{i,k} \leq \\ &=
    \tau_{i,j}\cdot d_{i,k}\cdot \sigma_{i}\cdot u_{i,k} =
    \tau_{i,j}\cdot \sigma_{k} \,.
  \end{split}
  \]
  
  \noin{\bf Case 3}: Assume that $\al = e_1$.  Then $R$ has type
  $B_n$, and since $\al$ and $\be$ are not maximal roots of
  $\alv+\bev$ we must have $\be = e_i-e_1$ for some $i>1$.  The greedy
  decomposition of $\alv+\bev$ is $(e_{i}+e_{1})$, and $s_\al \cdot
  s_\be \leq z_{\alv+\bev}$ holds because
  \[
  s_{0}\cdot \tau_{1,i} \leq
  s_{0}\cdot \tau_{1,i}\cdot s_{0} =
  \nu_{1,i} \,.
  \]
  
  \noin{\bf Case 4}: Assume that $\al = e_j+e_i$ and $\be = e_l-e_k$
  for some $i<j$ and $k<l$.  Then $R$ has type $B_n$ or $D_n$.  The
  assumption that $\al$ is not a maximal root of $\alv+\bev$ holds if
  and only if we have either $k \leq j < l$, or $i< l \leq j$ and $k
  \leq i < j-1$.  If $k \leq j < l$, then the inequality $s_\al \cdot
  s_\be \leq z_{\alv+\bev}$ follows from the subcases 4a, 4b, and 4c
  below, and if $i< l \leq j$ and $k \leq i < j-1$, then it follows
  from the subcases 4d and 4e.\smallskip

  \noin{\bf Case 4a}: Assume that $\alpha = e_{j}+e_{i}$ and $\beta =
  e_{k}-e_{i}$ where $i<j<k$.  Then the greedy decomposition of
  $\alv+\bev$ is $(e_{k}+e_{j})$.  If $R$ has type $B_n$, then the
  inequality $s_\al \cdot s_\be \leq z_{\alv+\bev}$ holds because
  \[
  \begin{split}
    \nu_{i,j}\cdot \tau_{i,k} 
    &=  \sigma_{i}\cdot \tau_{i,j}\cdot \sigma_{i}\cdot \tau_{i,k} 
    =  \sigma_{i}\cdot u_{i,j-1}\cdot d_{i,j}\cdot \sigma_{i}\cdot
    \tau_{i,k} \\
    &=  \sigma_{i}\cdot u_{i,j-1}\cdot d_{i,j}\cdot \sigma_{i}\cdot
    u_{i,j}\cdot \tau_{j,k}\cdot d_{i,j}
    =  \sigma_{i}\cdot u_{i,j-1}\cdot \sigma_{j}\cdot \tau_{j,k}\cdot d_{i,j} \\
    &=  \sigma_{i}\cdot \sigma_{j}\cdot \tau_{j,k}\cdot u_{i,j-1}\cdot d_{i,j}
    =  \sigma_{i}\cdot \sigma_{j}\cdot \tau_{j,k}\cdot \tau_{i,j} \\
    &=  \sigma_{j}\cdot \tau_{j,k}\cdot \sigma_{i}\cdot \tau_{i,j} 
    \leq  \sigma_{j}\cdot \tau_{j,k}\cdot \sigma_{j}
    =  \nu_{j,k} \hspace{10mm}\text{(by Case 2).}
  \end{split}
  \]
  Otherwise $R$ has type $D_n$ and the inequality holds because
  \[
  \begin{split}
    \nu_{i,j} &\cdot \tau_{i,k} 
    =  \nu_{i,j}\cdot u_{i,k}\cdot d_{i,k-1}
    =  d_{1,i}\cdot d_{2,j}\cdot s_{-1}\cdot u_{2,j}\cdot u_{1,i}\cdot
    u_{i,k}\cdot d_{i,k-1} \\
    &=  d_{1,i}\cdot d_{2,j}\cdot s_{-1}\cdot u_{2,j}\cdot
    u_{1,k}\cdot d_{i,k-1}
    =  d_{1,i}\cdot d_{2,j}\cdot s_{-1}\cdot u_{1,k}\cdot
    u_{1,j-1}\cdot d_{i,k-1} \\
    &=  d_{1,i}\cdot d_{2,j}\cdot s_{-1}\cdot s_{1}\cdot u_{2,k}\cdot
    u_{1,j-1}\cdot d_{i,k-1} \\
    &=  d_{1,i}\cdot d_{2,j}\cdot s_{1}\cdot s_{-1}\cdot u_{2,k}\cdot
    u_{1,j-1}\cdot d_{i,k-1} \\
    &=  d_{1,i}\cdot d_{1,j}\cdot s_{-1}\cdot u_{2,k}\cdot
    u_{1,j-1}\cdot d_{i,k-1} \\
    &=  d_{1,i}\cdot d_{1,j}\cdot s_{-1}\cdot u_{2,k}\cdot
    u_{1,j-1}\cdot d_{j,k-1}\cdot d_{i,j} \\
    &=  d_{1,i}\cdot d_{1,j}\cdot s_{-1}\cdot u_{2,k}\cdot
    d_{j,k-1}\cdot u_{1,j-1}\cdot d_{i,j} \\
    &=  d_{1,i}\cdot d_{1,j}\cdot s_{-1}\cdot u_{2,k}\cdot
    d_{j,k-1}\cdot u_{1,j}\cdot d_{i,j-1} \\
    &=  d_{1,i}\cdot d_{1,j}\cdot s_{-1}\cdot u_{2,k}\cdot
    d_{j,k-1}\cdot d_{i+1,j}\cdot u_{1,j} \\
    &=  d_{1,i}\cdot d_{1,j}\cdot s_{-1}\cdot u_{2,k}\cdot
    d_{i+1,k-1}\cdot u_{1,j}
    =  d_{1,i}\cdot d_{1,j}\cdot s_{-1}\cdot d_{i+2,k}\cdot
    u_{2,k}\cdot u_{1,j} \\
    &=  d_{1,j}\cdot d_{2,i+1}\cdot s_{-1}\cdot d_{i+2,k}\cdot
    u_{2,k}\cdot u_{1,j}
    =  d_{1,j}\cdot d_{i+2,k}\cdot d_{2,i+1}\cdot s_{-1}\cdot
    u_{2,k}\cdot u_{1,j} \\
    &\leq  d_{1,j}\cdot d_{2,k}\cdot s_{-1}\cdot u_{2,k}\cdot u_{1,j}
    =  \nu_{j,k} \,.
  \end{split}
  \]
  
  \noindent{\bf Case 4b}: Assume that $\alpha = e_{k}+e_{j}$ and
  $\beta = e_{l}-e_{i}$ where $i\leq j<k<l$.  Then the greedy
  decomposition of $\alpha^\vee+\beta^\vee$ is $(e_{l}+e_{k} ,
  e_{j}-e_{i})$, and $s_\al \cdot s_\be \leq z_{\alv+\bev}$ holds
  because
  \[
  \begin{split}
    \nu_{j,k}\cdot \tau_{i,l} 
    &=  \nu_{j,k}\cdot u_{i,j}\cdot \tau_{j,l}\cdot d_{i,j} 
    =  u_{i,j}\cdot \nu_{j,k}\cdot \tau_{j,l}\cdot d_{i,j}
    \leq \hspace{5mm}\text{(by Case 4a)}\\
    & u_{i,j}\cdot \nu_{k,l}\cdot d_{i,j}
    =  \nu_{k,l}\cdot u_{i,j}\cdot d_{i,j}
    =  \nu_{k,l}\cdot \tau_{i,j} \,.
  \end{split}
  \]
  
  \noindent{\bf Case 4c}: Assume that $\alpha = e_{k}+e_{i}$ and
  $\beta = e_{l}-e_{j}$ where $i<j\leq k<l$.  Then the greedy
  decomposition of $\alpha^\vee+\beta^\vee$ is $(e_{l}+e_{i} ,
  e_{k}-e_{j})$, and $s_\al \cdot s_\be \leq z_{\alv+\bev}$ holds
  because
  \[
  \begin{split}
    \nu_{i,k}\cdot \tau_{j,l} 
    &=  \nu_{i,k}\cdot u_{j,k}\cdot \tau_{k,l}\cdot d_{j,k}
    =  \nu_{i,k}\cdot u_{j,k}\cdot d_{k+1,l}\cdot u_{k,l}\cdot d_{j,k} \\
    &=  u_{j,k}\cdot d_{k+1,l}\cdot \nu_{i,k}\cdot u_{k,l}\cdot d_{j,k}
    \leq  u_{j,k}\cdot d_{k,l}\cdot \nu_{i,k}\cdot u_{k,l}\cdot d_{j,k} \\
    &=  u_{j,k}\cdot \nu_{i,l}\cdot d_{j,k}
    =  \nu_{i,l}\cdot u_{j,k}\cdot d_{j,k}
    =  \nu_{i,l}\cdot \tau_{j,k} \,.
  \end{split}
  \]
  
  \noindent{\bf Case 4d}: Assume that $\alpha = e_{k}+e_{j}$ and
  $\beta = e_{k}-e_{i}$ where $i\leq j<k-1$.  Then the greedy
  decomposition of $\alpha^\vee+\beta^\vee$ is $(e_{k}+e_{k-1} ,
  e_{k}-e_{k-1} , e_{j}-e_{i})$, and $s_\al \cdot s_\be \leq
  z_{\alv+\bev}$ holds because
  \[
  \begin{split}
    \nu_{j,k}\cdot \tau_{i,k} 
    &=  \nu_{j,k}\cdot u_{i,j}\cdot \tau_{j,k}\cdot d_{i,j}
    =  \nu_{j,k}\cdot u_{i,j}\cdot d_{j+1,k}\cdot u_{j,k}\cdot d_{i,j} \\
    &=  u_{i,j}\cdot d_{j+1,k}\cdot \nu_{j,k}\cdot u_{j,k}\cdot d_{i,j}
    \leq  u_{i,j}\cdot d_{j,k}\cdot \nu_{j,k}\cdot u_{j,k}\cdot d_{i,j} \\
    &=  u_{i,j}\cdot s_{k-1}\cdot d_{j,k-1}\cdot \nu_{j,k}\cdot
    u_{j,k-1}\cdot s_{k-1}\cdot d_{i,j} \\ 
    &=  u_{i,j}\cdot s_{k-1}\cdot \nu_{k-1,k}\cdot s_{k-1}\cdot d_{i,j}
    =  u_{i,j}\cdot \nu_{k-1,k}\cdot s_{k-1}\cdot s_{k-1}\cdot d_{i,j} \\
    &=  u_{i,j}\cdot \nu_{k-1,k}\cdot s_{k-1}\cdot d_{i,j}
    =  \nu_{k-1,k}\cdot s_{k-1}\cdot u_{i,j}\cdot d_{i,j}
    =  \nu_{k-1,k}\cdot s_{k-1}\cdot \tau_{i,j} \,.
  \end{split}
  \]
  
  \noindent{\bf Case 4e}: Assume that $\alpha = e_{l}+e_{j}$ and
  $\beta = e_{k}-e_{i}$ where $i\leq j<k<l$.  Then the greedy
  decomposition of $\alpha^\vee+\beta^\vee$ is $(e_{l}+e_{k} ,
  e_{j}-e_{i})$, and $s_\al \cdot s_\be \leq z_{\alv+\bev}$ holds
  because
  \[
  \begin{split}
    \nu_{j,l}\cdot \tau_{i,k} 
    &=  \nu_{j,l}\cdot u_{i,j}\cdot \tau_{j,k}\cdot d_{i,j}
    =  \nu_{j,l}\cdot u_{i,j}\cdot d_{j+1,k}\cdot u_{j,k}\cdot d_{i,j} \\
    &=  u_{i,j}\cdot d_{j+1,k}\cdot \nu_{j,l}\cdot u_{j,k}\cdot d_{i,j}
    \leq  u_{i,j}\cdot d_{j,k}\cdot \nu_{j,l}\cdot u_{j,k}\cdot d_{i,j} \\
    &=  u_{i,j}\cdot \nu_{k,l}\cdot d_{i,j}
    =  \nu_{k,l}\cdot u_{i,j}\cdot d_{i,j}
    =  \nu_{k,l}\cdot \tau_{i,j} \,.
  \end{split}
  \]
  
  \noin{\bf Case 5}: Assume that $\al = e_l+e_k$ and $\be = e_j+e_i$
  for some $i<j$ and $k<l$.  Then $R$ has type $B_n$ or $D_n$.  The
  assumption that $\al$ and $\be$ are not maximal roots of $\alv+\bev$
  implies that $i<l$ and $k<j$.  If $j \neq l$, then the inequality
  $s_\al \cdot s_\be \leq z_{\alv+\bev}$
  follows from the subcases 5a, 5b, and 5c below, and if $j = l$, then
  it follows from the subcases 5d and 5e.\smallskip
  
  \noindent{\bf Case 5a}: Assume that $\alpha = e_{l}+e_{j}$ and
  $\beta = e_{k}+e_{i}$ where $i<j<k<l$.  Then the greedy
  decomposition of $\alpha^\vee+\beta^\vee$ is $(e_{l}+e_{k} ,
  e_{j}+e_{i})$.  If $R$ has type $B_n$, then the inequality 
  $s_\al \cdot s_\be \leq z_{\alv+\bev}$ holds because
  \[
  \begin{split}
    \nu_{j,l}\cdot \nu_{i,k} 
    &=  \nu_{j,l}\cdot \sigma_{i}\cdot \tau_{i,k}\cdot \sigma_{i} 
    =  \sigma_{i}\cdot \nu_{j,l}\cdot \tau_{i,k}\cdot \sigma_{i} 
    \leq  \sigma_{i}\cdot \nu_{k,l}\cdot \tau_{i,j}\cdot \sigma_{i} 
    \hspace{3mm}\text{(by Case 4)}\\
    &=  \nu_{k,l}\cdot \sigma_{i}\cdot \tau_{i,j}\cdot \sigma_{i} 
    =  \nu_{k,l}\cdot \nu_{i,j} \,.
  \end{split}
  \]
  Otherwise $R$ has type $D_n$ and the inequality holds because
  \[
  \begin{split}
    \nu_{j,l} &\cdot \nu_{i,k} 
    =  \nu_{j,l}\cdot d_{1,i}\cdot \nu_{1,k}\cdot u_{1,i}
    =  \nu_{j,l}\cdot d_{1,i}\cdot s_{-1}\cdot \tau_{2,k}\cdot
    s_{-1}\cdot u_{1,i} \\ 
    &=  d_{1,i}\cdot s_{-1}\cdot \nu_{j,l}\cdot \tau_{2,k}\cdot
    s_{-1}\cdot u_{1,i}
    \leq  d_{1,i}\cdot s_{-1}\cdot \nu_{k,l}\cdot \tau_{2,j}\cdot
    s_{-1}\cdot u_{1,i} \hspace{2mm}\text{(by Case 4)}\\ 
    &=  \nu_{k,l}\cdot d_{1,i}\cdot s_{-1}\cdot \tau_{2,j}\cdot
    s_{-1}\cdot u_{1,i} 
    =  \nu_{k,l}\cdot d_{1,i}\cdot \nu_{1,j}\cdot u_{1,i} 
    =  \nu_{k,l}\cdot \nu_{i,j} \,.
  \end{split}
  \]
  
  \noindent{\bf Case 5b}: Assume that $\alpha = e_{l}+e_{i}$ and
  $\beta = e_{k}+e_{j}$ where $i<j<k<l$.  Then the greedy
  decomposition of $\alpha^\vee+\beta^\vee$ is $(e_{l}+e_{k} ,
  e_{j}+e_{i})$.  If $R$ has type $B_n$, then the inequality $s_\al
  \cdot s_\be \leq z_{\alv+\bev}$ holds because
  \[
  \begin{split}
    \nu_{i,l}\cdot \nu_{j,k} 
    &=  \sigma_{i}\cdot \tau_{i,l}\cdot \sigma_{i}\cdot \nu_{j,k}
    =  \sigma_{i}\cdot \tau_{i,l}\cdot \nu_{j,k}\cdot \sigma_{i}
    \leq  \sigma_{i}\cdot \nu_{k,l}\cdot \tau_{i,j}\cdot \sigma_{i} 
    \hspace{3mm}\text{(by Case 4)}\\
    &=  \nu_{k,l}\cdot \sigma_{i}\cdot \tau_{i,j}\cdot \sigma_{i}
    =  \nu_{k,l}\cdot \nu_{i,j} \,.
  \end{split}
  \]
  Otherwise $R$ has type $D_n$ and the inequality holds because
  \[
  \begin{split}
    \nu_{i,l} &\cdot \nu_{j,k} 
    =  d_{1,i}\cdot \nu_{1,l}\cdot u_{1,i}\cdot \nu_{j,k}
    =  d_{1,i}\cdot s_{-1}\cdot \tau_{2,l}\cdot s_{-1}\cdot
    u_{1,i}\cdot \nu_{j,k} \\ 
    &=  d_{1,i}\cdot s_{-1}\cdot \tau_{2,l}\cdot \nu_{j,k}\cdot
    s_{-1}\cdot u_{1,i}
    \leq  d_{1,i}\cdot s_{-1}\cdot \nu_{k,l}\cdot \tau_{2,j}\cdot
    s_{-1}\cdot u_{1,i} \hspace{2mm}\text{(by Case 4)}\\ 
    &=  \nu_{k,l}\cdot d_{1,i}\cdot s_{-1}\cdot \tau_{2,j}\cdot
    s_{-1}\cdot u_{1,i}
    =  \nu_{k,l}\cdot d_{1,i}\cdot \nu_{1,j}\cdot u_{1,i}
    =  \nu_{k,l}\cdot \nu_{i,j} \,.
  \end{split}
  \]
  
  \noindent{\bf Case 5c}: Assume that $\alpha = e_{k}+e_{i}$ and
  $\beta = e_{j}+e_{i}$ where $i<j<k$.  If $i=1$, then the assumption
  that $\al$ is not a maximal root of $\alv+\bev$ implies that $R$ has
  type $B_n$.  In this case the greedy decomposition of $\alv+\bev$ is
  $(e_{k}+e_{j} , e_{1})$, and the inequality $s_\al \cdot s_\be \leq
  z_{\alv+\bev}$ holds because
  \[
  \nu_{1,k} \cdot \nu_{1,j}
  = \nu_{1,k} \cdot s_0 \cdot \tau_{1,j} \cdot s_0
  = \nu_{1,k} \cdot \tau_{1,j} \cdot s_0
  \leq \nu_{j,k} \cdot s_0 \hspace{5mm}\text{(by Case 4).}
  \]
  Otherwise we have $1 < i < j < k$, the greedy decomposition of
  $\alpha^\vee+\beta^\vee$ is $(e_{k}+e_{j} , e_{i}+e_{i-1} ,
  e_{i}-e_{i-1})$, and the inequality follows from Case 5a because
  \[
  \nu_{i,k}\cdot \nu_{i,j} 
  =  \nu_{i,k}\cdot s_{i-1}\cdot \nu_{i-1,j}\cdot s_{i-1}
  =  \nu_{i,k}\cdot \nu_{i-1,j}\cdot s_{i-1}
  \leq  \nu_{j,k}\cdot \nu_{i-1,i}\cdot s_{i-1} \,.
  \]
  
  \noindent{\bf Case 5d}: Assume that $\alpha = e_{k}+e_{j}$ and
  $\beta = e_{k}+e_{i}$ where $i<j<k-1$.  Then the greedy
  decomposition of $\alpha^\vee+\beta^\vee$ is $(e_{k}+e_{k-1} ,
  e_{k}-e_{k-1} , e_{j}+e_{i})$, and the inequality $s_\al \cdot s_\be
  \leq z_{\alv+\bev}$ holds because
  \[
  \begin{split}
    \nu_{j,k}\cdot \nu_{i,k} 
    &=  \nu_{j,k}\cdot s_{k-1}\cdot \nu_{i,k-1}\cdot s_{k-1}
    =  \nu_{j,k}\cdot \nu_{i,k-1}\cdot s_{k-1} 
    \leq  \hspace{5mm}\text{(by Case 5a)} \\
    & \nu_{k-1,k}\cdot \nu_{i,j}\cdot s_{k-1}
    =  \nu_{k-1,k}\cdot s_{k-1}\cdot \nu_{i,j} \,.
  \end{split}
  \]
  
  \noindent{\bf Case 5e}: Assume that $\alpha = e_{j}+e_{i}$ and
  $\beta = e_{j}+e_{i}$ where $i<j-1$.  If $i=1$, then the assumption
  that $\al$ is not a maximal root of $\alv+\bev$ implies that $R$ has
  type $B_n$.  In this case the greedy decomposition of $\alv+\bev$ is
  $(e_{j}+e_{j-1} , e_{j}-e_{j-1} , e_{1})$, and the inequality $s_\al
  \cdot s_\be \leq z_{\alv+\bev}$ holds because
  \[
    \nu_{1,i}\cdot \nu_{1,i} 
    =  \nu_{1,i}\cdot s_{0}\cdot \tau_{1,i}\cdot s_{0}
    =  \nu_{1,i}\cdot \tau_{1,i}\cdot s_{0}
    \leq  \nu_{i-1,i}\cdot s_{i-1}\cdot s_{0} \hspace{5mm}\text{(by
      Case 4).}
  \]
  Otherwise we have $1<i<j-1$, the greedy decomposition of
  $\alpha^\vee+\beta^\vee$ is $(e_{j}+e_{j-1} , e_{j}-e_{j-1} ,
  e_{i}+e_{i-1} , e_{i}-e_{i-1})$, and the inequality follows from
  Case 5d because
  \[
  \nu_{i,j}\cdot \nu_{i,j} 
  = \nu_{i,j}\cdot s_{i-1}\cdot \nu_{i-1,j}\cdot s_{i-1}
  = \nu_{i,j}\cdot \nu_{i-1,j}\cdot s_{i-1}
  \leq \nu_{j-1,j}\cdot s_{j-1}\cdot \nu_{i-1,i}\cdot s_{i-1} \,.
  \]
  Since the above cases cover all possibilities for $\al$ and $\be$,
  the proof is complete.
\end{proof}

\begin{thm}\label{thm:zd_ieq}
  Let $d \in \Z \Delta^\vee$ be an effective degree.\smallskip

  \noin{\rm(a)} If $\al \in R^+$ satisfies $\alv \leq d$, then
  $s_\al \cdot z_{d-\alv} \leq z_d$.\smallskip

  \noin{\rm(b)} If $0 \leq d' \leq d$, then $z_{d'} \cdot z_{d-d'}
  \leq z_d$.
\end{thm}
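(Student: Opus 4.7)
My plan is to prove (a) and (b) by simultaneous strong induction on $d$, using the observation that $z_{\alpha^\vee} = s_\alpha$ for cosmall $\alpha$, so (a) for cosmall $\alpha$ coincides with the special case of (b) at $d' = \alpha^\vee$. Assuming both parts for all $d'' < d$, the order of work at $d$ is: first establish (a) at $d$ for cosmall $\alpha$, then deduce (b) at $d$, then deduce (a) at $d$ for non-cosmall $\alpha$.

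Given (a) at $d$ for cosmall roots, the remaining deductions are immediate. For (b) at $d$ with $0 < d'$, let $\alpha_1$ be a maximal root of $d'$ (hence cosmall); the inductive hypothesis at the smaller degree $d - \alpha_1^\vee$ gives $z_{d' - \alpha_1^\vee} \cdot z_{d - d'} \leq z_{d - \alpha_1^\vee}$, so Proposition~\ref{prop:hecke}(c) yields $z_{d'} \cdot z_{d-d'} \leq s_{\alpha_1} \cdot z_{d - \alpha_1^\vee} \leq z_d$. For non-cosmall $\alpha$ in (a), Lemma~\ref{lem:replace}(a) gives $s_\alpha < z_{\alpha^\vee}$, so $s_\alpha \cdot z_{d-\alpha^\vee} \leq z_{\alpha^\vee} \cdot z_{d-\alpha^\vee} \leq z_d$ by Proposition~\ref{prop:hecke}(c) and the freshly-established (b) at $d$.

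The substantive work is (a) at $d$ for cosmall $\alpha$ which is not itself a maximal root of $d$. I would split on whether some maximal root $\beta_1$ of $d$ is separated from $\alpha$. In the separated case, disjoint supports force $\alpha^\vee + \beta_1^\vee \leq d$, and $\beta_1$ also qualifies as a maximal root of $d - \alpha^\vee$ (any cosmall $\gamma' > \beta_1$ with $\gamma'^\vee \leq d - \alpha^\vee$ would contradict maximality of $\beta_1$ in $d$), so $z_{d-\alpha^\vee} = s_{\beta_1} \cdot z_{d-\alpha^\vee-\beta_1^\vee}$. Since separated roots have commuting reflections, the inductive (a) at $d-\beta_1^\vee$ applied to cosmall $\alpha$ yields $s_\alpha \cdot z_{d-\alpha^\vee} = s_{\beta_1} \cdot s_\alpha \cdot z_{d-\alpha^\vee-\beta_1^\vee} \leq s_{\beta_1} \cdot z_{d-\beta_1^\vee} = z_d$. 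If no maximal root of $d$ is separated from $\alpha$, then since distinct maximal roots of $d$ are pairwise separated (Corollary~\ref{cor:maxmax}) while $\supp(\alpha)$ is connected, there must be a unique maximal root $\beta$ of $d$, and $\alpha < \beta$.

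The main obstacle is this remaining case. Lemma~\ref{lem:cover}(b) yields a chain of cosmall roots $\alpha = \gamma_0 < \gamma_1 < \cdots < \gamma_r = \beta$ with each $\gamma_i^\vee - \gamma_{i-1}^\vee = \mu_i^\vee$ a simple coroot, and iterating Lemma~\ref{lem:replace}(b) gives $s_\alpha \cdot s_{\mu_1} \cdots s_{\mu_r} \leq s_\beta$ in $W$. Lifting this $W$-level inequality to the Hecke product inequality $s_\alpha \cdot z_{d-\alpha^\vee} \leq z_d$ is the subtle point. I plan a secondary induction on $r$, reducing to the one-step claim $s_{\gamma_{i-1}} \cdot z_{d-\gamma_{i-1}^\vee} \leq s_{\gamma_i} \cdot z_{d-\gamma_i^\vee}$; the essential combinatorial difficulty is identifying an appropriate factor of $s_{\mu_i}$ inside $z_{d-\gamma_{i-1}^\vee}$ that can be moved to the front via Proposition~\ref{prop:hecke}, which I expect to require exploiting the irreducibility of the subsystem of $R$ supported on $\supp(\beta)$ together with the structural properties of greedy decompositions developed earlier in Section~\ref{sec:comb_zd}.
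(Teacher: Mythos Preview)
Your overall induction scheme, the derivation of (b) from (a), and the non-cosmall case are all correct and match the paper's argument. Your separated sub-case for cosmall $\alpha$ is also valid (the paper does not split this way, but the reduction to degree $d-\beta_1^\vee$ works as you describe).

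The genuine gap is exactly where you flag it: the non-separated cosmall case. Your plan is to reduce to the one-step inequality $s_{\gamma_{i-1}}\cdot z_{d-\gamma_{i-1}^\vee}\le s_{\gamma_i}\cdot z_{d-\gamma_i^\vee}$ by ``extracting a factor $s_{\mu_i}$'' from $z_{d-\gamma_{i-1}^\vee}$. But the only general relation you have is the inductive inequality $s_{\mu_i}\cdot z_{d-\gamma_i^\vee}\le z_{d-\gamma_{i-1}^\vee}$, which points the wrong way; a simple root $\mu_i$ is almost never a maximal root of $d-\gamma_{i-1}^\vee$, so you cannot write $z_{d-\gamma_{i-1}^\vee}=s_{\mu_i}\cdot(\text{something})$ by definition, and there is no mechanism in Section~\ref{sec:comb_zd} to peel off a prescribed simple reflection from a $z$-element.

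The paper's resolution is to avoid simple reflections entirely. With $\alpha$ cosmall and not maximal for $d$, pick $\gamma>\alpha$ cosmall with $\gamma^\vee-\alpha^\vee=\mu^\vee$ a simple coroot (Lemma~\ref{lem:cover}(b)). Since $\mu^\vee\le d-\alpha^\vee$, there is a \emph{maximal} root $\beta$ of $d-\alpha^\vee$ with $\mu^\vee\le\beta^\vee$; now $z_{d-\alpha^\vee}=s_\beta\cdot z_{d-\alpha^\vee-\beta^\vee}$ holds \emph{by definition}. Both $\alpha$ and $\beta$ are cosmall, so Lemma~\ref{lem:replace}(b) gives $s_\alpha\cdot s_\beta\le z_{\alpha^\vee+\beta^\vee}$. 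Since $\gamma^\vee=\alpha^\vee+\mu^\vee\le\alpha^\vee+\beta^\vee$, a maximal root $\gamma'\ge\gamma>\alpha$ of $\alpha^\vee+\beta^\vee$ exists, and one obtains
\[
s_\alpha\cdot z_{d-\alpha^\vee}
= s_\alpha\cdot s_\beta\cdot z_{d-\alpha^\vee-\beta^\vee}
\le z_{\alpha^\vee+\beta^\vee}\cdot z_{d-\alpha^\vee-\beta^\vee}
= s_{\gamma'}\cdot z_{\alpha^\vee+\beta^\vee-\gamma'^\vee}\cdot z_{d-\alpha^\vee-\beta^\vee}
\le s_{\gamma'}\cdot z_{d-\gamma'^\vee},
\]
using (b) at the strictly smaller degree $d-\gamma'^\vee$ for the last step. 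Now $\gamma'>\alpha$ closes a descending induction on $\alpha$. The key move you are missing is to pull off a \emph{maximal} root of $d-\alpha^\vee$ rather than the simple root $\mu_i$, so that Lemma~\ref{lem:replace}(b) applies to a pair of cosmall roots and produces a strictly larger cosmall root for the inner induction.
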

\begin{proof}
  We proceed by induction on $d$, the case $d=0$ being clear.  Let $d
  > 0$ and assume that the theorem is true for all strictly smaller
  degrees.  We first show that part (b) follows from part (a).  Given
  an effective degree $d'$ with $0 < d' < d$, let $\al \in R^+$ be a
  maximal root of $d'$.  Then part (a) and the induction hypothesis
  imply that
  \[
  z_{d'}\cdot z_{d-d'} = 
  s_\al \cdot z_{d'-\alv} \cdot z_{d-d'} \leq
  s_\al \cdot z_{d-\alv} \leq z_d
  \]
  as required.

  We prove part (a) by descending induction on $\al$.  The statement
  is true by definition if $\al$ is a maximal root of $d$.  
  Assume $\al$ is not a maximal root of $d$.
  If $\al$ is not cosmall, then let $\be > \al$ be a maximal root
  of $\alv$.  We then obtain from Lemma~\ref{lem:replace}(a) that
  \[
  s_\al \cdot z_{d-\alv} \leq
  z_{\alv} \cdot z_{d-\alv} =
  s_\be \cdot z_{\alv-\bev} \cdot z_{d-\alv} \leq
  s_\be \cdot z_{d-\bev} \leq
  z_d \,.
  \]

  We may therefore assume that $\al$ is cosmall.  Since $\al$ is not a
  maximal root of $d$, we can choose a cosmall root $\ga$ such that
  $\al < \ga$ and $\gav \leq d$.  By Lemma~\ref{lem:cover} we may
  assume that $\gav-\alv$ is a simple coroot.  We can therefore choose
  a maximal root $\be$ of $d-\alv$ such that $\gav-\alv \leq \bev$.
  Now let $\ga' \geq \ga$ be a maximal root of $\alv + \bev$.  We
  finally obtain from Lemma~\ref{lem:replace}(b) that
  \begin{multline*}
  s_\al \cdot z_{d-\alv} =
  s_\al \cdot s_\be \cdot z_{d-\alv-\bev} \leq
  z_{\alv+\bev} \cdot z_{d-\alv-\bev} \\ =
  s_{\ga'} \cdot z_{\alv+\bev-{\ga'}^\vee} \cdot z_{d-\alv-\bev} \leq
  s_{\ga'} \cdot z_{d-{\ga'}^\vee} \leq
  z_d \,.
  \end{multline*}
  This completes the proof.
\end{proof}

\subsection{General homogeneous spaces}

We finish this section by extending the construction of $z_d$ to an
arbitrary homogeneous space $X = G/P$.  Given a degree $d \in H_2(X) =
\Z\Delta^\vee / \Z\Delta_P^\vee$, the maximal elements of the set $\{
\al \in R^+\ssm R^+_P \mid \alv+\Delta_P^\vee \leq d \}$ are called
{\em maximal roots\/} of $d$.  The root $\al \in R^+ \ssm R^+_P$ is
called {\em $P$-cosmall\/} if $\al$ is a maximal root of
$\alv+\Delta_P^\vee \in H_2(X)$.  Notice that any $P$-cosmall root is
cosmall, and a $B$-cosmall root is the same as a cosmall root.  The
highest root in $R$ is $P$-cosmall for every parabolic subgroup $P$.
If $\al$ is $P$-cosmall, then Proposition~\ref{prop:commute} implies
that $s_\al \cdot s_\be = s_\be \cdot s_\al$ for each $\be \in
\Delta_P$.  It follows that $s_\al \cdot w_P = w_P \cdot s_\al$.

Define a {\em greedy decomposition\/} of $d \in H_2(X)$ to be a
sequence of positive roots $(\al_1, \al_2, \dots, \al_k)$ such that
$\al_1 \in R^+ \ssm R^+_P$ is a maximal root of $d$ and
$(\al_2,\dots,\al_k)$ is a greedy decomposition of $d - \al_1^\vee \in
H_2(X)$.  The empty sequence is the only greedy decomposition of $0
\in H_2(X)$.  If $(\al_1,\al_2,\dots,\al_k)$ is a greedy decomposition
of $d \in H_2(X)$, then for any sufficiently large degree $e \in
H_2(G/B)$ such that $e + \Z\Delta_P^\vee = d$ there exist positive
roots $\ga_1,\dots,\ga_m \in R^+_P$ such that $(\al_1, \dots, \al_k,
\ga_1, \dots, \ga_m)$ is a greedy decomposition of $e$.  It follows
from this that any other greedy decomposition of $d$ is a reordering
of $(\al_1,\dots,\al_k)$.

If $d \in H_2(X)$ is any effective degree and $(\al_1,\dots,\al_k)$ is
a greedy decomposition of $d$, then we let $z_d^P \in W^P$ be the
unique element satisfying
\[
z_d^P w_P = s_{\al_1} \cdot s_{\al_2} \cdot \ldots \cdot s_{\al_k}
\cdot w_P \,.
\]
Notice that $w_P \cdot z_d^P w_P = z_d^P w_P$.

\begin{cor}\label{cor:dzP_ieq}
  Let $d \in H_2(X)$ be an effective degree.\smallskip

  \noin{\rm(a)} If $\al \in R^+$ satisfies $\alv +
  \Z\Delta_P^\vee \leq d$, then $s_\al \cdot z_{d-\alv}^P w_P \leq
  z_d^P w_P$.\smallskip

  \noin{\rm(b)} If $0 \leq d' \leq d$, then $z_{d'}^P \cdot z_{d-d'}^P
  w_P \leq z_d^P w_P$.\smallskip

  \noin{\rm(c)} If $\al \in R^+\ssm R^+_P$ is a maximal root of $d$,
  then $s_\al \cdot z_{d-\alv}^P w_P = z_d^P w_P$.\smallskip

  \noin{\rm(d)} For all sufficiently large degrees $e \in H_2(G/B)$
  such that $e + \Z\Delta_P = d \in H_2(X)$ we have $z^P_d w_P = z_e$.
\end{cor}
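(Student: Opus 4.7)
The plan is to prove part~(d) first. This identifies $z_d^P w_P$ with $z_e$ for a sufficiently large lift $e$ of $d$, reducing parts (a) and (b) to the corresponding statements of Theorem~\ref{thm:zd_ieq} applied in $G/B$, while part (c) will follow directly from associativity of the Hecke product and the combinatorial definition of $z_d^P w_P$.

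For (d), I would fix a greedy decomposition $(\al_1,\dots,\al_k)$ of $d$. By the observation immediately preceding the corollary, any sufficiently large lift $e \in H_2(G/B)$ of $d$ extends this to a greedy decomposition $(\al_1,\dots,\al_k,\ga_1,\dots,\ga_m)$ of $e$ with $\ga_j \in R^+_P$, and hence
\[
  z_e \;=\; s_{\al_1}\cdot\ldots\cdot s_{\al_k}\cdot s_{\ga_1}\cdot\ldots\cdot s_{\ga_m}.
\]
The tail Hecke product lies in $W_P$ and, by Proposition~\ref{prop:hecke}(d), is bounded above by $w_P$. For $e$ large enough the tail actually equals $w_P$: once every simple reflection of $W_P$ appears sufficiently often among the $s_{\ga_j}$, the Hecke product saturates at the longest element of $W_P$. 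This gives $z_e = s_{\al_1}\cdot\ldots\cdot s_{\al_k}\cdot w_P = z_d^P w_P$, proving (d).

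Part (c) is then immediate: if $\al \in R^+\ssm R^+_P$ is a maximal root of $d$, form a greedy decomposition of $d$ starting with $\al$, so that the remaining part is a greedy decomposition of $d-\alv$; associativity of the Hecke product gives $z_d^P w_P = s_\al\cdot z_{d-\alv}^P w_P$. For (a), given $\al \in R^+$ with $\alv+\Z\Delta_P^\vee \leq d$, choose a lift $e$ of $d$ large enough that $e-\alv$ is an effective lift of $d-\alv \in H_2(X)$ with both $e$ and $e-\alv$ satisfying (d); then Theorem~\ref{thm:zd_ieq}(a) gives $s_\al \cdot z_{e-\alv} \leq z_e$, which translates directly. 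Part (b) is analogous: split a sufficiently large lift as $e = e' + (e-e')$ with $e'$ a lift of $d'$ and $e-e'$ a lift of $d-d'$, apply Theorem~\ref{thm:zd_ieq}(b), and use the identity $w_P \cdot z_{d-d'}^P w_P = z_{d-d'}^P w_P$ (a consequence of the fact that each $P$-cosmall root commutes with every $\be \in \Delta_P$ under the Hecke product, via Proposition~\ref{prop:commute}(b)) to rewrite $z_{d'}^P w_P \cdot z_{d-d'}^P w_P$ as $z_{d'}^P \cdot z_{d-d'}^P w_P$.

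The main obstacle is verifying the ``sufficiently large'' conditions coherently across all parts and making precise the claim that the tail product of reflections $s_{\ga_j}$ exhausts $w_P$; both points are handled by noting that any deficiency can be removed by adding a large element of $\Z\Delta_P^\vee$ to a chosen initial lift, which affects neither $d \in H_2(X)$ nor the initial segment $(\al_1,\dots,\al_k)$ of the greedy decomposition. Once this is in place, everything else is a formal consequence of Theorem~\ref{thm:zd_ieq} together with the monoid structure established in Section~\ref{sec:hecke}.
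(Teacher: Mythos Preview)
Your proposal is correct and follows essentially the same approach as the paper, which simply records that parts (a) and (b) follow from Theorem~\ref{thm:zd_ieq} while parts (c) and (d) are clear from the definitions. You have supplied exactly the details that justify this one-line proof: the lift to a sufficiently large $e \in \Z\Delta^\vee$ in order to invoke Theorem~\ref{thm:zd_ieq}, the use of the identity $w_P \cdot z_{d-d'}^P w_P = z_{d-d'}^P w_P$ (noted just before the corollary) to absorb the extra $w_P$ factor in (b), and the saturation of the $R^+_P$-tail of the greedy decomposition at $w_P$ for (d).
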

\begin{proof}
  Parts (a) and (b) follow from Theorem~\ref{thm:zd_ieq}, and parts
  (c) and (d) are clear from the definitions.
\end{proof}

\begin{remark}
  Let $0 \leq d \in H_2(X)$.  For any root $\al \in R^+$ such that
  $\alv \leq d \in H_2(X)$ we have (1) $z_{d'}^P w_P \cdot s_\al
  \leq z_d^P w_P$, (2) $s_\al \cdot z_{d'}^P w_P \leq z_d^P w_P$, (3)
  $z_{d'}^P \cdot s_\al W_P \leq z_d^P W_P$, and (4) $s_\al \cdot
  z_{d'}^P W_P \leq z_d^P W_P$, where $d' = d - \alv \in H_2(X)$.
  Furthermore, if $\al \in R^+ \ssm R^+_P$ is any maximal root of $d$,
  then (1), (2), (3), and (4) hold with equality.
\end{remark}


\section{Curve Neighborhoods}\label{sec:curve_nbhd}

\subsection{The main theorem}

Given an effective degree $d \in H_2(X) = \Z\Delta^\vee /
\Z\Delta_P^\vee$, let $\Mb_{0,n}(X,d)$ be the Kontsevich space of
$n$-pointed stable maps of degree $d$ to $X$, with total evaluation
map $\ev = (\ev_1,\dots,\ev_n) : \Mb_{0,n}(X,d) \to X^n$.  We have
\[
\dim \Mb_{0,n}(X,d) = \dim(X) + (c_1(T_X),d) + n-3
\]
where $c_1(T_X)$ is given by (\ref{eqn:c1T}).  Given any subvariety
$\Omega \subset X$, define the degree $d$ {\em curve neighborhood\/}
of $\Omega$ to be $\Gamma_d(\Omega) = \ev_1(\ev_2^{-1}(\Omega))$.  A
geometric argument in \cite[Cor.~3.3(a)]{buch.chaput.ea:finiteness}
shows that, if $Z$ is a $B$-stable Schubert variety, then so is
$\Gamma_d(\Omega)$.  The following result gives a more combinatorial
proof of this fact, and also identifies the Weyl group element of the
curve neighborhood.  This result is equivalent to Theorem~1.

\begin{thm}\label{thm:main}
  For any $w \in W$ we have $\Gamma_d(X(w)) = X(w \cdot z_d^P)$.
\end{thm}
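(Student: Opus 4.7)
I will prove the two inclusions $X(w \cdot z_d^P) \subseteq \Gamma_d(X(w))$ and $\Gamma_d(X(w)) \subseteq X(w \cdot z_d^P)$ separately, relying only on facts already established in Sections~\ref{sec:schubvars}--\ref{sec:comb_zd}.

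For the forward inclusion, fix a greedy decomposition $(\al_1, \dots, \al_k)$ of $d \in H_2(X)$ and set $v_i = w \cdot s_{\al_1} \cdot s_{\al_2} \cdots s_{\al_i}$, so $v_0 = w$ and $v_k W_P = (w \cdot z_d^P) W_P$ by the definition of $z_d^P$. Since each $v_i = v_{i-1} \cdot s_{\al_i}$ equals either $v_{i-1}$ or $v_{i-1} s_{\al_i}$, the $T$-stable curve $v_{i-1} C_{\al_i}$, which has degree $\al_i^\vee \in H_2(X)$, passes through both $v_{i-1}.P$ and $v_i.P$. Gluing these $k$ curves along their shared $T$-fixed points produces a nodal chain of $\P^1$'s in $X$; equipped with marked points at $w.P$ and $v_k.P$ this is a stable map in $\Mb_{0,2}(X, d)$, proving $(w \cdot z_d^P).P \in \Gamma_d(X(w))$. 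The inclusion follows because $\Gamma_d(X(w))$ is closed and $B$-stable.

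For the reverse inclusion, the results of \cite{buch.chaput.ea:finiteness} imply $\Gamma_d(X(w))$ is an irreducible $B$-stable closed subvariety of $X$, so $\Gamma_d(X(w)) = X(v)$ for some $v \in W^P$, and it suffices to prove $v W_P \leq (w \cdot z_d^P) W_P$. Since $v.P \in X(v)$, there is a stable map $(f, x_1, x_2) \in \Mb_{0,2}(X,d)$ with $f(x_1) = v.P$ and $f(x_2) \in X(w)$. Applying a generic one-parameter subgroup $\lambda \colon \C^* \to T$ and using properness of $\Mb_{0,2}(X,d)$, I pass to the $T$-fixed limit $f_0$, whose image is a tree of $T$-curves, with $f_0(x_1) = v.P$ and $f_0(x_2) = u.P$ for some $u \in W^P$ satisfying $u W_P \leq w W_P$. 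Reading the chain $v_0 C_{\be_1}, v_1 C_{\be_2}, \dots, v_{m-1} C_{\be_m}$ along the unique path in the dual tree from $x_2$ to $x_1$, with $v_0.P = u.P$, $v_m.P = v.P$, each $\be_i \in R^+ \ssm R^+_P$, and $v_i W_P = v_{i-1} s_{\be_i} W_P$, set $d' = \sum_i \be_i^\vee \in H_2(X)$, so $d' \leq d$. Iterating Proposition~\ref{prop:hecke}(d) gives $v W_P = u\, s_{\be_1} s_{\be_2} \cdots s_{\be_m} W_P \leq (u \cdot s_{\be_1} \cdot s_{\be_2} \cdots s_{\be_m}) W_P$, and induction on $m$ based on Corollary~\ref{cor:dzP_ieq}(a) shows $s_{\be_1} \cdot s_{\be_2} \cdots s_{\be_m} W_P \leq z_{d'}^P W_P$. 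Finally, Hecke-product monotonicity (Proposition~\ref{prop:hecke}(c)) combined with $u W_P \leq w W_P$ and $z_{d'}^P W_P \leq z_d^P W_P$ (the latter from Corollary~\ref{cor:dzP_ieq}(b) applied to $d' \leq d$) yields $(u \cdot z_{d'}^P) W_P \leq (w \cdot z_d^P) W_P$, completing the argument.

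The main obstacle is the $T$-equivariant specialization step in the reverse inclusion: one must justify that a generic one-parameter specialization of $(f,x_1,x_2)$ inside $\Mb_{0,2}(X,d)$ yields a limiting stable map whose image really is a tree of $T$-curves, whose marked points sit at the stated $T$-fixed positions, and whose dual-tree path from $x_2$ to $x_1$ carries exactly the Weyl-group data $u,\be_1,\dots,\be_m$ described above. Once this geometric setup is in place, the remainder of the argument is purely combinatorial and rests entirely on the Hecke-product monotonicity of Section~\ref{sec:hecke} together with Corollary~\ref{cor:dzP_ieq}.
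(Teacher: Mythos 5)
Your argument is correct in outline and leads to the same conclusion, but it takes a noticeably different route from the paper and has two places where the paper is cleaner.

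\textbf{Forward inclusion.} You build the whole nodal chain attached to the full greedy decomposition $(\al_1,\dots,\al_k)$ in one step. The paper instead peels off only the first maximal root, uses the single $T$-stable curve $v.C_\al$ to get $X(w\cdot s_\al)\subset\Gamma_{\alv}(X(w))$, and then applies the inductive hypothesis for $\Gamma_{d-\alv}$ together with the composition law $\Gamma_{d-\alv}(\Gamma_{\alv}(\Omega))\subset\Gamma_d(\Omega)$. Both work; your version requires you to check that when $v_i=v_{i-1}$ the curve $v_{i-1}.C_{\al_i}$ is attached as a bubble rather than part of the spine, so the paper's one-step induction is a little safer.

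\textbf{Reverse inclusion.} Here the difference is more substantive. You appeal to \cite{buch.chaput.ea:finiteness} to know that $\Gamma_d(X(w))$ is an irreducible $B$-stable closed subvariety, write it as $X(v)$, and then only have to bound the single element $v$. The paper explicitly wants to avoid that external input (it says the theorem gives a \emph{more combinatorial} proof of that irreducibility statement), so it instead takes an arbitrary $T$-fixed point $u.P\in\Gamma_d(X(w))$ and shows directly that $u.P\in X(w\cdot z_d^P)$, using only the easy $B$-stability of $\Gamma_d(X(w))$.

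You also flag the $T$-equivariant specialization step as the main obstacle, and rightly so. The paper sidesteps it: the locus of degree-$d$ stable maps joining $X(w)$ to $u.P$ is a nonempty closed $T$-stable subvariety of $\Mb_{0,2}(X,d)$, so by the Borel fixed point theorem it contains a $T$-fixed stable map, whose image is then a connected union of $T$-stable curves. No generic one-parameter subgroup, no analysis of the limit, no need to match marked points with $T$-fixed points through the degeneration. Moreover, the paper again peels off a single component from the $X(w)$-end and applies the inductive hypothesis for smaller degree, whereas you read off the whole chain $u,\be_1,\dots,\be_m$ and then run an internal induction on $m$ via Corollary~\ref{cor:dzP_ieq}(a). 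The two inductions carry the same combinatorial content, but the paper's version is shorter because the inductive hypothesis for $d-\alv$ already packages the full chain.

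Your final Bruhat-order deductions are sound, though the monotonicity statements you use (that $u\mapsto u\cdot v$ and $vW_P\mapsto w\cdot vW_P$ are order-preserving, and that $z_{d'}^P W_P\leq z_d^P W_P$ for $d'\leq d$) are justified; each follows from Proposition~\ref{prop:hecke}(c)--(d) and Corollary~\ref{cor:dzP_ieq}(b) exactly as you indicate. In summary: correct, same combinatorial engine (Corollary~\ref{cor:dzP_ieq}), but a longer geometric setup that also requires the irreducibility result from \cite{buch.chaput.ea:finiteness}, which the paper was explicitly trying to reprove.
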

\begin{proof}
  We prove the result by induction on $d$, the case $d=0$ being clear.
  Assume that $d>0$ and that $\Gamma_{d'}(X(u)) = X(u \cdot z_{d'}^P)$
  for all $d' < d$ and all $u \in W$.  Let $\al \in R^+ \ssm R^+_P$ be
  a maximal root of $d$ and set $v = (w \cdot s_\al)s_\al$.  Since
  $v.P \in X(w)$ and $v.C_\al$ is a curve of degree $\alv +
  \Z\Delta^\vee_P$ from $v.P$ to $(w \cdot s_\al).P$, it follows that
  $X(w \cdot s_\al) \subset \Gamma_{\alv}(X(w))$.  We therefore obtain
  $X(w \cdot z_d^P) = X(w \cdot s_\al \cdot z_{d-\alv}^P) =
  \Gamma_{d-\alv}(X(w \cdot s_\al)) \subset
  \Gamma_{d-\alv}(\Gamma_{\alv}(X(w))) \subset \Gamma_d(X(w))$.

  On the other hand, let $u.P \in \Gamma_d(X(w))$ be any $T$-fixed
  point.  Since the locus of curves of degree $d$ from $X(w)$ to $u.P$
  is a closed $T$-stable subvariety of $\Mb_{0,2}(X,d)$, it follows
  that this locus contains a $T$-stable curve $C$ connecting $u.P$ to
  a point $v.P \in X(w)$ where $v \in W^P$.  This curve must be a
  connected union of irreducible $T$-stable components.  At least one
  component contains $v.P$, and any such component has the form
  $v.C_\al$ with $\al \in R^+\ssm R^+_P$.  Choose a component
  $v.C_\al$ such that $C' = \ov{C \ssm v.C_\al}$ connects $v s_\al.P$
  to $u.P$.  Since $v s_\al.P \in X(w \cdot s_\al)$ and $[C'] \leq
  d-\alv \in H_2(X)$, it follows from Corollary~\ref{cor:dzP_ieq}(a)
  that $u.P \in \Gamma_{d-\alv}(X(w \cdot s_\al)) = X(w \cdot s_\al
  \cdot z_{d-\alv}^P) \subset X(w \cdot z_d^P)$.  Since
  $\Gamma_d(X(w))$ is $B$-stable and all its $T$-fixed points belong
  to $X(w \cdot z_d^P)$, we deduce that $\Gamma_d(X(w)) \subset X(w
  \cdot z_d^P)$.  This completes the proof.
\end{proof}

\begin{remark}
  The curve neighborhood of the opposite Schubert variety $Y(w)$ is
  given by $\Gamma_d(Y(w)) = Y(w_0 (w_0 w \cdot z_d^P))$, where $w_0
  (w_0 w \cdot z_d^P)$ may be regarded as a `decreasing' Hecke product
  of $w$ and $z_d^P$.
\end{remark}

\ignore{
\begin{thm1}
  For any $w \in W$ we have $\Gamma_d(X(w)) = X(w \cdot z_d^P)$.
\end{thm1}
\begin{proof}
  We prove the result by induction on $d$, the case $d=0$ being clear.
  Assume that $d > 0$.  Let $\al$ be a maximal root of $d$, set $d' =
  d - \alv \in H_2(X)$, and set $v = (w \cdot z_d^P w_P) s_\al$.  Then
  $v = (w \cdot z_{d'}^P w_P \cdot s_\al) s_\al \leq w \cdot z_{d'}^P
  w_P$, so by induction we have $v.P \in X(w \cdot z_{d'}^P) =
  \Gamma_{d'}(X(w))$.  We can therefore choose a curve $C'$ of degree
  $d'$ from $X(w)$ to $v.P$.  Then $C = C' \cup v.C_\al$ is a curve of
  degree $d$ from $X(w)$ to $vs_\al.P$, so we have $(w \cdot z_d^P).P
  = v s_\al.P \in \Gamma_d(X(w))$.  Since $\Gamma_d(X(w))$ is closed
  and $B$-stable, this implies that $X(w \cdot z_d^P) \subset
  \Gamma_d(X(w))$.

  On the other hand, let $u.P \in \Gamma_d(X(w))$ be any $T$-fixed
  point.  Since the locus of curves of degree $d$ from $X(w)$ to $u.P$
  is a closed $T$-stable subvariety of $\Mb_{0,2}(X,d)$, it follows
  that this locus contains a $T$-stable curve $C$.  This curve must be
  a connected union of irreducible $T$-stable components.  At least
  one component contains $u.P$, and any such component has the form
  $u.C_\al$ with $\al \in R^+ \ssm R^+_P$.  Choose a component
  $u.C_\al$ so that $C' = \ov{C \ssm u.C_\al}$ connects $X(w)$ to $u
  s_\al.P$.  Set $d' = [C'] \leq d - \alv \in H_2(X)$.  Since we have
  $u s_\al.P \in \Gamma_{d'}(X(w))$ and the induction hypothesis
  implies that $\Gamma_{d'}(X(w)) = X(w \cdot z_{d'}^P)$, we obtain $u
  = u s_\al s_\al \leq u s_\al \cdot s_\al \leq w \cdot z_{d'}^P w_P
  \cdot s_\al \leq w \cdot z_{d'+\alv}^P w_P \leq w \cdot z_d^P w_P$,
  hence $u.P \in X(w \cdot z_d^P)$.  Since $\Gamma_d(X(w))$ is
  $B$-stable and all its $T$-fixed points belong to $X(w \cdot
  z_d^P)$, we deduce that $\Gamma_d(X(w)) \subset X(w \cdot z_d^P)$.
  This completes the proof.
\end{proof}
}

\subsection{The moment graph}

The element $z_d^P \in W^P$ can also be constructed using the {\em
  moment graph\/} of $X$.  The vertices of this graph are the
$T$-fixed points $X^T$ and the edges are the irreducible $T$-stable
curves in $X$.  More precisely, there is an edge between $u.P$ and
$w.P$ if and only if $w W_P = u s_\al W_P$ for some root $\al \in R^+
\ssm R^+_P$; the corresponding $T$-stable curve is $u.C_\al \subset X$
which has degree $[C_\al] = \alv + \Z\Delta_P \in H_2(X)$.  Define
the {\em weight\/} of a path in the moment graph to be the sum of the
degrees of its edges.  Given $d \in H_2(X)$, let $Z_d \subset X^T$
be the subset of points $w.P$ for which there exists a path from $1.P$
to $w.P$ of weight at most $d$.  Then Theorem~1 implies that $z_d^P.P$
is the unique maximal element of $Z_d$ in the Bruhat order on $X^T$,
defined by $u.P \leq w.P$ if and only if $u W_P \leq w W_P$.

\begin{example}\label{ex:moment_B2}
  Let $X = \SO(5)/B = \OF(5)$ be the variety of isotropic flags in the
  vector space $\C^5$ equipped with an orthogonal form.  The
  corresponding root system has type $B_2$.  Let $\Delta = \{ \be_1,
  \be_2 \}$ be the simple roots, with $\be_1$ long and $\be_2$ short.
  The remaining positive roots are $\al = \be_1+\be_2$ and $\ga =
  \be_1+2\be_2$, with coroots $\alv = 2\be_1^\vee + \be_2^\vee$ and
  $\gav = \be_1^\vee + \be_2^\vee$.  Write $s_i = s_{\be_i}$ for
  $i=1,2$.  The moment graph of $X$ is displayed below, with each edge
  labeled by its degree.  Since the paths of weight $\gav$ starting at
  $1$ are $1 \to s_1 \to s_1 s_2$, $1 \to s_2 \to s_2 s_1$, and $1 \to
  s_\ga = s_2 s_1 s_2$, we have $z_{\gav} = s_\ga$.  On the other
  hand, the paths of weight $\alv$ starting at $1$ include $1 \to s_1
  \to w_0$, so $z_{\alv} = w_0 \neq s_\al = s_1 s_2 s_1$.
  \[
  {
    \psfrag{a}{\raisebox{1mm}{$\be_1$}}
    \psfrag{n}{$\be_2$}
    \psfrag{c}{\raisebox{1mm}{$\al$}}
    \psfrag{e}{\raisebox{1mm}{$\ga$}}
    \raisebox{20mm}{\pic{.8}{b2}}
  }
  \ \ \ \ \ \ \ \  
  {
    \psfrag{a}{$\!\!\!\be_1^\vee$}
    \psfrag{c}{$\!\!\!\be_2^\vee$}
    \psfrag{e}{\raisebox{-1mm}{$\!\!\!\alv$}}
    \psfrag{n}{\raisebox{-.5mm}{$\!\!\!\gav$}}
    \psfrag{w0}{$\!\!s_1 s_2$}
    \psfrag{w1}{$\!\!s_1 s_2 s_1$}
    \psfrag{w2}{$\!w_0$}
    \psfrag{w3}{$\!\!\!\!\!\!\!\!\!\!\!s_2 s_1 s_2$}
    \psfrag{w4}{$\!\!\!\!\!s_2 s_1$}
    \psfrag{w5}{$\!s_2$}
    \psfrag{w6}{$1$}
    \psfrag{w7}{$s_1$}
    \pic{.8}{moment}
  }
  \]
\end{example}

\subsection{Line neighborhoods}

Let $\ga \in \Delta \ssm \Delta_P$ and consider the Dynkin diagram of
the simple roots in the set $\Delta_P \cup \{\ga\}$.  We will say that
$\ga$ is a {\em Fano root\/} of $X$ if $\ga$ is at least as long as
all other roots in its connected component in this diagram.  It has
been proved by Strickland \cite{strickland:lines} and by Landsberg and
Manivel \cite{landsberg.manivel:projective*1} that $\ga$ is a Fano root
for $X$ if and only if the Fano variety of lines in $X$ of degree
$\gav \in H_2(X)$ is a homogeneous space.  In this case we will
compute the element $z_\gav^P$ that describes neighborhoods defined by
lines of degree $\gav$.

\begin{lemma}\label{lem:wP_max_root}
  Let $\ga \in \Delta\ssm \Delta_P$.  Then $w_P(\ga)$ is the largest
  root in $R \cap (\ga + \Z\Delta_P)$.
\end{lemma}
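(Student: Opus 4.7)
Let $S = R \cap (\ga + \Z\Delta_P)$. My plan is to exhibit $S$ as a poset (under the restriction of the usual order $\leq$ on $\R\Delta$) with $\ga$ as its minimum, and then show that $w_P$ acts on $S$ as an order-reversing involution, so that the image $w_P(\ga)$ of the minimum is automatically the maximum.

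First I would observe the basic structural facts. Every $\delta \in S$ has $\ga$-coefficient equal to $1$ when expanded in the simple root basis $\Delta$, and zero coefficient on each simple root in $\Delta \ssm (\Delta_P \cup \{\ga\})$; since the $\ga$-coefficient is positive, $\delta$ lies in $R^+$, so $\delta - \ga \in \Z_{\geq 0}\Delta_P$. This immediately gives $\ga \leq \delta$ for all $\delta \in S$, so $\ga$ is the minimum of $S$. Next, because each $s_\be$ with $\be \in \Delta_P$ acts on $\R\Delta$ by adding a multiple of $\be$, the subgroup $W_P$ preserves the affine subspace $\ga + \R\Delta_P$, and of course it preserves $R$; hence $w_P(S) = S$.

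The key step is to verify that the involution $\delta \mapsto w_P(\delta)$ of $S$ reverses the order $\leq$. This reduces to showing $w_P(\Z_{\geq 0}\Delta_P) = \Z_{\leq 0}\Delta_P$, which in turn follows from the standard fact that the longest element $w_P$ of the Weyl group $W_P$ sends $\Delta_P$ into $-\Delta_P$ (more precisely, $-w_P$ is a permutation of $\Delta_P$). Granting this, if $\delta \leq \delta'$ in $S$ then $\delta' - \delta \in \Z_{\geq 0}\Delta_P$, and applying $w_P$ yields $w_P(\delta') - w_P(\delta) \in \Z_{\leq 0}\Delta_P$, i.e.\ $w_P(\delta) \geq w_P(\delta')$.

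Combining the pieces, $w_P : S \to S$ is an order-reversing bijection, and $\ga$ is the minimum of $S$, so $w_P(\ga)$ is the maximum of $S$, which is precisely the claim. I expect no real obstacle here; the only point that requires a moment of care is the verification that $w_P$ sends $\Delta_P$ to $-\Delta_P$, but this is a standard property of the longest element of a finite Coxeter group and is already implicit in the discussion of Section~\ref{sec:hecke}.
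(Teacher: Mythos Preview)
Your argument is correct and is essentially the same as the paper's: both show that $\ga$ is the minimum of $S = R \cap (\ga + \Z\Delta_P)$ and that $w_P$ acts as an order-reversing involution on $S$ (the paper phrases this as $\rho - w_P(\ga) = w_P(w_P(\rho)-\ga) \leq 0$ for any maximal $\rho \in S$, using $w_P(\Delta_P) \subset R^-$). The only cosmetic difference is that you invoke the slightly stronger fact $w_P(\Delta_P) = -\Delta_P$, which is equally standard.
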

\begin{proof}
  Let $\rho$ be any maximal root in the set $R \cap (\ga +
  \Z\Delta_P)$.  Then $w_P(\rho) - \ga$ is a non-negative linear
  combination of $\Delta_P$.  Since $w_P(\Delta_P) \subset R^-$ we
  obtain $\rho - w_P(\ga) = w_P(w_P(\rho)-\ga) \leq 0$.  Since
  $w_P(\ga) \in R \cap (\ga + \Z\Delta_P)$, this implies that $\rho =
  w_P(\ga)$.
\end{proof}

\begin{prop}\label{prop:fano_nbhd}
  Let $\ga \in \Delta \ssm \Delta_P$ be any Fano root for $X$.  Then
  $w_P(\ga)$ is the unique maximal root of $\gav + \Z\Delta_P^\vee \in
  H_2(X)$.
\end{prop}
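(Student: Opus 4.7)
Set $\rho := w_P(\ga)$, and let $S = \{\al \in R^+ \ssm R^+_P : \alv + \Z\Delta_P^\vee \leq \gav + \Z\Delta_P^\vee\}$. The plan is to show first that $\rho \in S$ and then that $\al \leq \rho$ for every $\al \in S$; the proposition follows because $\rho$ is then simultaneously a maximum and hence the unique maximal root.

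For $\rho \in S$, I would observe that $w_P \in W_P$ fixes each coset modulo $\Z\Delta_P$, so the support of $\rho$ outside $\Delta_P$ coincides with that of $\ga$; in particular $\rho \in R^+ \ssm R^+_P$. For the coroot congruence, I would induct on $\ell(w_P)$ using $s_\be(\gav) = \gav - (\be,\gav)\bev$ for $\be \in \Delta_P$, noting that $(\be,\gav)$ is an integer Cartan pairing, to deduce $\rho^\vee = w_P(\gav) \equiv \gav \pmod{\Z\Delta_P^\vee}$.

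The real content lies in step two. Given $\al \in S$, write $\al = \sum_{\de \in \Delta} a_\de \de$ and expand $\alv = \sum_\de b_\de \dev$ in the simple coroot basis; pairing with the fundamental weight $\omega_\de$ (using $(\omega_\de, {\al'}^\vee) = \delta_{\de \al'}$ on the basis) yields $b_\de = a_\de (\de,\de)/(\al,\al)$. The inequality defining $S$ translates into $b_\be \leq \delta_{\be\ga}$ for every $\be \in \Delta \ssm \Delta_P$, which forces $a_\be = 0$ outside $\Delta_P \cup \{\ga\}$ and $a_\ga (\ga,\ga) \leq (\al,\al)$. Since $\al \notin R_P$ we must have $a_\ga \geq 1$. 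The Fano hypothesis now enters: $\supp(\al)$ is connected, contains $\ga$, and lies in $\Delta_P \cup \{\ga\}$, so it sits inside the connected component of $\ga$ in this sub-diagram, where $\ga$ is a longest simple root; hence $(\al,\al) \leq (\ga,\ga)$. Combining the two length inequalities forces $a_\ga = 1$ and $(\al,\al) = (\ga,\ga)$, so $\al \in R \cap (\ga + \Z\Delta_P)$. Lemma~\ref{lem:wP_max_root} then gives $\al \leq w_P(\ga) = \rho$.

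The principal obstacle is the careful bookkeeping in step two: one must juggle the simple coroot expansion of $\alv$ against the simple root expansion of $\al$ and invoke the Fano property precisely at the length bound $(\al,\al) \leq (\ga,\ga)$. Once both the support constraint and the length constraint are pinned down, membership in $\ga + \Z\Delta_P$ is automatic, and Lemma~\ref{lem:wP_max_root} closes the argument by producing simultaneously the maximality and uniqueness of $\rho$ in $S$.
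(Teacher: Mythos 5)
Your proof is correct and follows essentially the same route as the paper: both arguments reduce to showing that any candidate root $\al$ satisfies $|\al| = |\ga|$ via the Fano length bound combined with the integrality of the $\ga$-coefficient, then conclude $\al \in \ga + \Z\Delta_P$ and invoke Lemma~\ref{lem:wP_max_root}. The only cosmetic difference is that you verify $\rho \in S$ and establish $\rho$ as the maximum of $S$, while the paper takes a maximal $\al$ and identifies it with $\rho$ at the end — the same observations in a slightly different order.
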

\begin{proof}
  Let $\al \in R^+\ssm R^+_P$ be any maximal root of $\gav +
  \Z\Delta_P^\vee$.  Then we have $\alv = \gav + y$ for some $y \in
  \Z\Delta_P^\vee$, and $\al = \frac{|\al|^2}{|\ga|^2}\,\ga +
  \frac{|\al|^2}{2}\,y$.  Since $\ga$ is a Fano root of $X$ and the
  support of $\al$ is contained in the connected component of $\ga$ in
  the Dynkin diagram of $\Delta_P\cup\{\ga\}$, it follows that $|\al|
  \leq |\ga|$.  We deduce that $\al \in R \cap (\ga + \Z\Delta_P)$, so
  Lemma~\ref{lem:wP_max_root} implies that $\al \leq \rho :=
  w_P(\ga)$.  Finally, since $\rho^\vee + \Z\Delta_P^\vee = \ga^\vee +
  \Z\Delta_P^\vee \in H_2(X)$, we must have $\al = \rho$.
\end{proof}

\begin{cor}
  If $\ga \in \Delta \ssm \Delta_P$ is a Fano root for $X$, then 
  $z_\gav^P W_P = w_P s_\ga W_P$.
\end{cor}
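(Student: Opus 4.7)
The plan is to apply Proposition~\ref{prop:fano_nbhd} to identify the greedy decomposition of $d := \gav + \Z\Delta_P^\vee \in H_2(X)$, and then descend the resulting identity in $W$ to the quotient $W/W_P$ using the compatibility of the Hecke product with cosets that was established in Section~\ref{sec:hecke}.

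First, set $\rho := w_P(\ga)$. By Proposition~\ref{prop:fano_nbhd}, $\rho$ is the unique maximal root of $d$. Since any element of $W_P$ acts on the coroot lattice by a transformation of the form $\mu^\vee \mapsto \mu^\vee + (\text{element of } \Z\Delta_P^\vee)$, one has $\rho^\vee \equiv \gav \pmod{\Z\Delta_P^\vee}$, so $d - \rho^\vee = 0 \in H_2(X)$. Thus the only greedy decomposition of $d$ in $H_2(X)$ is the single-term sequence $(\rho)$, and the definition of $z_d^P$ gives
\[
z_\gav^P\, w_P \;=\; s_\rho \cdot w_P.
\]

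Next, I would pass to $W/W_P$ via the Hecke action $u \cdot (v W_P) = (u \cdot v) W_P$ introduced just before Proposition~\ref{prop:heckeP}. Taking $u = s_\rho$ and $v = w_P$ and using $w_P W_P = W_P$, I obtain
\[
z_\gav^P\, W_P \;=\; (s_\rho \cdot w_P)\, W_P \;=\; s_\rho \cdot W_P \;=\; s_\rho W_P.
\]

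To conclude, I would invoke the conjugation identity $s_\rho = s_{w_P(\ga)} = w_P s_\ga w_P^{-1}$, together with $w_P \in W_P$, to deduce $s_\rho W_P = w_P s_\ga w_P W_P = w_P s_\ga W_P$. Chaining the three displays gives the claim. I do not anticipate any real obstacle here: once Proposition~\ref{prop:fano_nbhd} produces a single-term greedy decomposition of $d$, all that remains is one cosetwise manipulation of Hecke products, which is a direct application of the formalism built in Section~\ref{sec:hecke}.
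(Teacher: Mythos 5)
Your proof is correct and follows essentially the same path as the paper: invoke Proposition~\ref{prop:fano_nbhd} to obtain the single-term greedy decomposition $(\rho)$ with $\rho = w_P(\ga)$, read off $z_\gav^P w_P = s_\rho \cdot w_P$ from the definition, and then use $s_\rho = w_P s_\ga w_P$. The paper performs the final simplification directly on maximal representatives in $W$, while you pass to $W/W_P$ first via the Hecke action on cosets, but this is only a cosmetic difference.
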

\begin{proof}
  The root $\rho = w_P(\ga)$ is the unique maximal root of $\gav \in
  H_2(X)$, so we have $z_\gav^P w_P = s_\rho \cdot w_P = w_P s_\ga w_P
  \cdot w_P = w_P s_\ga \cdot w_P$.
\end{proof}

\subsection{Degree distances in cominuscule varieties}

A simple root $\ga \in \Delta$ is {\em cominuscule\/} if, when the
highest root in $R^+$ is written as a linear combination of simple
roots, the coefficient of $\ga$ is one.  The variety $X = G/P$ is
cominuscule if $P$ is a maximal parabolic subgroup and $\Delta_P =
\Delta \ssm \{\ga\}$ for a cominuscule root $\ga$.  In this case
$H_2(X) \cong \Z$ is generated by $\gav + \Z\Delta_P^\vee$, so any
degree can be identified with an integer.  The following result is
known from combining lemmas 4.2 and 4.4 in
\cite{buch.chaput.ea:finiteness}.

\begin{thm}\label{thm:zdP_comin}
  Assume that $X = G/P$ is cominuscule with $\Delta_P = \Delta \ssm
  \{\ga\}$.  For any effective degree $d \in H_2(X)$ we have $z_d^P
  W_P = (w_P s_\ga) \cdot (w_P s_\ga) \cdot \ldots \cdot (w_P s_\ga)
  W_P$ where the Hecke product has $d$ factors.
\end{thm}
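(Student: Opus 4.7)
The plan is to proceed by induction on $d$, the base case $d = 0$ being trivial. Throughout I set $\eta := w_P s_\ga$, so the target identity reads $z_d^P W_P = (\eta \cdot \eta \cdots \eta)\, W_P$ with $d$ Hecke factors.

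First I extract the structural consequences of the cominuscule hypothesis. Since $m_\ga(\theta) = 1$ for the highest root $\theta$ of $R$, we have $\theta \in R^+ \ssm R^+_P$ and $\theta \in \ga + \Z\Delta_P$. Because $\theta$ is maximal in $R^+$, Lemma~\ref{lem:wP_max_root} applied to $\ga$ forces $\theta = w_P(\ga)$. Hence $\theta^\vee + \Z\Delta_P^\vee = w_P(\gav) + \Z\Delta_P^\vee = \gav + \Z\Delta_P^\vee = 1 \in H_2(X)$, and since $\theta$ is maximal in the root poset, it is the unique maximal root of every effective degree $d \geq 1$. Moreover $s_\theta = w_P s_\ga w_P$, so $s_\theta W_P = \eta W_P$.

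For the inductive step I apply Theorem~1 with $\al = \theta$, which gives $\Gamma_d(X(1)) = \Gamma_{d-1}(X(s_\theta))$. Since $X(s_\theta) = X(\eta)$ (same $W_P$-coset), Theorem~\ref{thm:main} rewrites $\Gamma_{d-1}(X(\eta)) = X(\eta \cdot z_{d-1}^P)$, yielding the coset identity $z_d^P W_P = (\eta \cdot z_{d-1}^P) W_P$. Using that the Hecke action of $W$ on $W/W_P$ is well-defined in its $W/W_P$ argument, together with the inductive hypothesis, I compute
\[
(\eta \cdot z_{d-1}^P)\, W_P = \eta \cdot (z_{d-1}^P W_P) = \eta \cdot ((\eta \cdots \eta)\, W_P) = (\eta \cdots \eta)\, W_P,
\]
where the right-hand side has $d$ Hecke factors, closing the induction.

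The main subtlety, and the reason a purely combinatorial argument inside the Hecke monoid would be delicate, is that the \emph{left} factor of a Hecke product is not determined by its $W_P$-coset: the identity $s_\theta W_P = \eta W_P$ does \emph{not} by itself imply $(s_\theta \cdot y) W_P = (\eta \cdot y) W_P$ for arbitrary $y \in W$. The replacement of $s_\theta$ by $\eta$ used above is justified only because it occurs inside a curve-neighborhood operation, and Theorem~\ref{thm:main} tells us that $\Gamma_{d-1}(X(w))$ depends on $w$ only through the coset $w W_P$. This geometric input is what unlocks the reduction to degree $d-1$ and drives the entire induction.
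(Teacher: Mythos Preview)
Your proof is correct, but it follows a genuinely different route from the paper's.

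The paper's argument is purely combinatorial and never invokes Theorem~1 or Theorem~\ref{thm:main}. After identifying $\rho = w_P(\ga)$ as the highest root (just as you do), the paper observes that the greedy decomposition of any $d \geq 1$ is $(\rho,\dots,\rho)$, so the \emph{definition} of $z_d^P$ gives $z_d^P w_P = s_\rho \cdot \ldots \cdot s_\rho \cdot w_P$ directly. The substitution $s_\rho \leadsto w_P s_\ga$ is then handled algebraically via the Hecke identity $s_\rho \cdot w_P = w_P s_\ga \cdot w_P$, which follows from the commutation $s_\rho \cdot w_P = w_P \cdot s_\rho$ established for $P$-cosmall roots just before Corollary~\ref{cor:dzP_ieq}. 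Iterating this identity converts $s_\rho^{\cdot d} \cdot w_P$ into $(w_P s_\ga)^{\cdot d} \cdot w_P$ at the level of $W$, not merely $W/W_P$.

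Your approach instead routes the substitution through geometry: the coset equality $s_\theta W_P = \eta W_P$ becomes the equality of Schubert varieties $X(s_\theta) = X(\eta)$, and the fact that $\Gamma_{d-1}(-)$ depends only on this variety lets you replace $s_\theta$ by $\eta$ inside the inductive step. You are right that this replacement is \emph{not} available purely in the Hecke monoid (the left factor is not coset-invariant), and your explanation of why the geometric input resolves this is accurate. The trade-off is that the paper's proof is logically independent of the main curve-neighborhood theorems and yields a slightly sharper statement (an equality after right-multiplication by $w_P$ rather than only modulo $W_P$), while your argument is shorter once Theorem~\ref{thm:main} is in hand and makes the role of the cominuscule hypothesis very transparent.
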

\begin{proof}
  Lemma~\ref{lem:wP_max_root} and the cominuscule condition imply that
  $\rho = w_P(\ga)$ is the highest root in $R^+$.  Since $\rho^\vee +
  \Z\Delta_P^\vee = \gamma^\vee + \Z\Delta_P^\vee$ is the smallest
  positive degree in $H_2(X)$, it follows that $\rho$ is the only
  $P$-cosmall root in $R^+\ssm R^+_P$.  The greedy decomposition of $d
  \in H_2(X)$ is therefore $(\rho,\rho,\dots,\rho)$, with $\rho$
  repeated $d$ times.  Finally, since $s_\rho \cdot w_P = w_P s_\ga
  \cdot w_P$, we obtain
  \[
  z_d^P w_P = s_\rho \cdot s_\rho \cdot \ldots \cdot s_\rho \cdot w_P
  = (w_P s_\ga) \cdot (w_P s_\ga) \cdot \ldots \cdot (w_P s_\ga) \cdot
  w_P \,,
  \]
  as required.
\end{proof}

Given two point $x,y \in X$ in a cominuscule variety, define the {\em
  degree distance\/} $d(x,y)$ to be the smallest possible degree of a
rational curve from $x$ to $y$.  The study of this integer was
suggested by Zak \cite{zak:tangents} and plays a fundamental role in
Chaput, Manivel, and Perrin's generalization
\cite{chaput.manivel.ea:quantum*1} of the `quantum equals classical'
principle from \cite{buch.kresch.ea:gromov-witten}.  The maximal value
of $d(x,y)$ was computed by Hwang and Kebekus
\cite{hwang.kebekus:geometry}.  Notice that we may choose $g \in G$
such that $g.x = 1.P$ and $g.y = u.P$ for some $u \in W^P$, in which
case $d(x,y) = d(1.P, u.P)$.  The function $d(x,y)$ is therefore
determined by the following corollary, which was obtained earlier in
\cite[Prop.~18]{chaput.manivel.ea:quantum*1}.

\begin{cor}
  Assume that $X = G/P$ is cominuscule with $\Delta_P = \Delta \ssm
  \{\ga\}$, and let $u \in W^P$.  Then $d(1.P, u.P)$ is the number of
  occurrences of $s_\ga$ in any reduced expression for $u$.
\end{cor}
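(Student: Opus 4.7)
The plan is to combine Theorem~\ref{thm:main} with Theorem~\ref{thm:zdP_comin}. Applying Theorem~\ref{thm:main} to $w=1$ gives $\Gamma_d(X(1)) = X(z_d^P)$, so the existence of a degree-$d$ rational curve from $1.P$ to $u.P$ is equivalent to $u W_P \leq z_d^P W_P$, whence
\[
d(1.P,u.P) \;=\; \min\{d \geq 0 : u W_P \leq z_d^P W_P\}\,.
\]
Writing $N_\ga(u)$ for the number of occurrences of $s_\ga$ in a reduced expression of $u$ (well defined because every element of $W^P$ is fully commutative in the cominuscule case, so reduced expressions differ only by commutations of commuting simple reflections), the corollary reduces to matching upper and lower bounds on this minimum in terms of $N_\ga(u)$.

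For the upper bound, fix any reduced expression $u = s_{\be_1} \cdots s_{\be_\ell}$ containing $N_\ga(u) = k$ occurrences of $s_\ga$, and form the partial products $u_i = s_{\be_1} \cdots s_{\be_i}$. When $\be_i \neq \ga$ we have $s_{\be_i} \in W_P$, so $u_{i-1}.P = u_i.P$ in $X$; when $\be_i = \ga$, the $T$-fixed points $u_{i-1}.P$ and $u_i.P = u_{i-1}s_\ga.P$ are joined by the $T$-stable curve $u_{i-1}.C_\ga$ of degree $\gav + \Z\Delta_P^\vee = 1 \in H_2(X) \cong \Z$. Concatenating the at most $k$ nontrivial lines yields a connected rational curve of degree $\leq k$ from $1.P$ to $u.P$, giving $d(1.P,u.P) \leq k$.

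For the lower bound, fix a reduced expression $w_P = s_{\al_1} \cdots s_{\al_m}$, which involves no $s_\ga$ since $\Delta_P = \Delta \ssm \{\ga\}$. Because $w_P(\ga) > 0$, the product $w_P s_\ga$ is reduced, so $s_{\al_1} \cdots s_{\al_m} s_\ga$ is a reduced expression of $w_P s_\ga$ containing exactly one $s_\ga$. By the definition of the Hecke product, the concatenation $(s_{\al_1} \cdots s_{\al_m} s_\ga)^d$, which contains exactly $d$ occurrences of $s_\ga$, admits a reduced expression of $(w_P s_\ga)^{\cdot d}$ as a subword. Theorem~\ref{thm:zdP_comin} then gives $z_d^P \leq (w_P s_\ga)^{\cdot d}$ in the Bruhat order on $W$, so the subword property produces a reduced expression of $z_d^P$ as a subword of this one, still with at most $d$ occurrences of $s_\ga$. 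If $d = d(1.P,u.P)$, then $u \leq z_d^P$, and a final application of the subword property exhibits a reduced expression of $u$ with at most $d$ occurrences of $s_\ga$; by full commutativity this count is $N_\ga(u)$, hence $N_\ga(u) \leq d$.

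The most delicate part of the argument is the two-step application of the subword property — passing from a reduced expression of $(w_P s_\ga)^{\cdot d}$ down to one for $z_d^P$ and then to one for $u$ — together with the identification, via the Hecke product construction, of a reduced expression of $(w_P s_\ga)^{\cdot d}$ in which $s_\ga$ appears at most $d$ times.
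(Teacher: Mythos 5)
Your proposal is correct and follows essentially the same route as the paper: both reduce to the equivalence ``$u.P \in \Gamma_d(X(1))$ iff $u$ has a reduced expression with at most $d$ occurrences of $s_\ga$'' obtained from Theorem~\ref{thm:zdP_comin}, and both finish by invoking Stembridge's full commutativity. Where the paper cites Theorem~\ref{thm:zdP_comin} in one line, you spell out the two directions — the forward one via the subword property applied twice (first to pass from $(w_P s_\ga)^{\cdot d}$ to $z_d^P$, then to $u$), and the backward one via an explicit chain of $T$-stable lines attached at each occurrence of $s_\ga$ — which is a legitimate and careful unpacking rather than a new approach.
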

\begin{proof}
  For any degree $d \in H_2(X)$ it follows from
  Theorem~\ref{thm:zdP_comin} that $u.P \in \Gamma_d(X(1))$ if and
  only if $u$ has a reduced expression with at most $d$ occurrences of
  $s_\ga$.  Set $d = d(1.P,u.P)$.  Since we have $u.P \in
  \Gamma_d(X(1))$ and $u.P \notin \Gamma_{d-1}(X(1))$, we deduce that
  $u$ has a reduced expression with exactly $d$ occurrences of
  $s_\ga$.  We finally appeal to Stembridge's result
  \cite{stembridge:fully} that all elements of $W^P$ are {\em fully
    commutative}, i.e.\ any reduced expression for an element of $W^P$
  can be obtained from any other by interchanging commuting simple
  transpositions (see also \cite[\S 2]{buch.samuel:k-theory} for an
  alternative proof of this fact).  This implies that all reduced
  expressions for $u$ have the same number of occurrences of $s_\ga$.
\end{proof}


\section{Criteria for cosmall roots}\label{sec:criteria}

In this section we prove several criteria for cosmall roots.  These
results will be useful for proving the quantum Chevalley formula.  We
start with the following two theorems which are proved together.

\begin{thm}\label{thm:Pcosmall}
  Given any root $\al \in R^+ \ssm R^+_P$ we have $\ell(s_\al W_P)
  \leq (c_1(T_X),\alv) - 1$.  Moreover, the following are
  equivalent.\smallskip
  
  \noin{\rm(a)} The root $\al$ is $P$-cosmall.\smallskip
  
  \noin{\rm(b)} We have equality $\ell(s_\al W_P) = (c_1(T_X),\alv) -
  1$.\smallskip

  \noin{\rm(c)} We have $(R^+ \ssm R^+_P) \cap s_\al(R^+_P)
  = \emptyset$ and $(\ga,\alv)=1$ for all $\ga \in I(s_\al) \ssm
  (R^+_P \cup \{\al\})$.
\end{thm}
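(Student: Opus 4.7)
The plan is to derive the inequality and (b) $\iff$ (c) by a pairing argument on $(c_1(T_X), \alv) = \sum_{\ga \in R^+ \ssm R^+_P}(\ga, \alv)$ (from (\ref{eqn:c1T})), and then to connect (a) and (b) via the combinatorial description of $z_\alv^P$ together with Lemma~\ref{lem:alpha_unique}. Grouping the sum by $s_\al$-orbits intersected with $R^+ \ssm R^+_P$: two-element orbits $\{\ga, s_\al(\ga)\}$ entirely contained in $R^+ \ssm R^+_P$ contribute $(\ga, \alv) + (s_\al(\ga), \alv) = 0$, fixed points contribute $0$, and $\al$ itself contributes $(\al, \alv) = 2$. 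The remaining contribution comes from the disjoint sets
\[ A = \{\ga \in (R^+ \ssm R^+_P) \ssm \{\al\} : s_\al(\ga) \in R^-\}, \quad B = \{\ga \in (R^+ \ssm R^+_P) \ssm \{\al\} : s_\al(\ga) \in R^+_P\}, \]
and for each such $\ga$ one has $(\ga, \alv) \geq 1$: for $\ga \in A$ this follows from $s_\al(\ga) = \ga - (\ga,\alv)\al < 0 < \ga$, and for $\ga \in B$ from analyzing the $\Delta \ssm \Delta_P$-coefficients of $\ga - s_\al(\ga) = (\ga, \alv) \al$. Since $I(s_\al) \ssm R^+_P = \{\al\} \sqcup A$ gives $\ell(s_\al W_P) = 1 + |A|$, these estimates combine to
\[ \ell(s_\al W_P) = 1 + |A| \leq (c_1(T_X), \alv) - 1 - |B| \leq (c_1(T_X), \alv) - 1, \]
with equality iff $|B| = 0$ and $(\ga, \alv) = 1$ for every $\ga \in A$. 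Translating $|B| = 0$ via the involution $s_\al$ (noting that $s_\al(\de) = \al$ would force $\de = -\al \notin R^+_P$) gives $(R^+ \ssm R^+_P) \cap s_\al(R^+_P) = \emptyset$, so this equality condition is precisely (c), establishing the inequality and (b) $\iff$ (c).

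For (b) $\Rightarrow$ (a), let $\be \in R^+ \ssm R^+_P$ be the $P$-cosmall maximal root of $\alv + \Z\Delta_P^\vee \in H_2(X)$, so that $z_\alv^P W_P = s_\be W_P$ by construction (with $\be$ uniquely determined by $s_\be W_P$ via Lemma~\ref{lem:alpha_unique}). The $T$-stable curve $C_\al$ of degree $\alv + \Z\Delta_P^\vee$ from $1.P$ to $s_\al.P$ combined with Theorem~\ref{thm:main} gives $s_\al W_P \leq z_\alv^P W_P = s_\be W_P$ in the Bruhat order. The already-proven inequality applied to $\be$ gives $\ell(s_\be W_P) \leq (c_1(T_X), \bev) - 1 = (c_1(T_X), \alv) - 1$, using that $c_1(T_X) \in \Z\{\omega_\eta : \eta \in \Delta \ssm \Delta_P\}$ annihilates $\bev - \alv \in \Z\Delta_P^\vee$. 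Assuming (b), the chain
\[ (c_1(T_X), \alv) - 1 = \ell(s_\al W_P) \leq \ell(s_\be W_P) \leq (c_1(T_X), \alv) - 1 \]
collapses to equalities, forcing $s_\al W_P = s_\be W_P$, and Lemma~\ref{lem:alpha_unique} yields $\al = \be$, hence (a).

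The main obstacle is (a) $\Rightarrow$ (c), which I would prove by contrapositive. Assuming $\al$ is $P$-cosmall and (c) fails, I would construct a root $\be' \in R^+ \ssm R^+_P$ with $\be' > \al$ and $(\be')^\vee + \Z\Delta_P^\vee \leq \alv + \Z\Delta_P^\vee$, contradicting the maximality of $\al$. If (c1) fails via $\de \in R^+_P$ with $s_\al(\de) \in R^+ \ssm R^+_P$, then $(s_\al(\de))^\vee = \dev - (\dev, \al)\alv$ represents $-(\dev, \al)(\alv + \Z\Delta_P^\vee)$ in $H_2(X)$; the borderline case $(\dev, \al) = -1$ produces $\be' = s_\al(\de)$ directly, while the mixed-length cases $(\dev, \al) \in \{-2, -3\}$ require passing to a suitable root in the $\al$-string through $\de$. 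If (c2) fails via $\ga$ with $(\ga, \alv) \geq 2$, the $\al$-string through $\ga$ produces $\be'$ by an analogous analysis. The branching by the ratios of root lengths between $\al$ and $\de$ or $\ga$, in the spirit of Lemma~\ref{lem:replace}, is the classification-based bookkeeping where the combinatorial weight of the proof lies.
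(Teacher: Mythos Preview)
Your counting argument for the inequality and for (b)$\iff$(c) is correct and matches the paper's proof essentially verbatim.

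There is a genuine gap in your (b)$\Rightarrow$(a). You assert that if $\be$ is a maximal root of $d=\alv+\Z\Delta_P^\vee$ then $z_{\alv}^P W_P = s_\be W_P$ and $\bev-\alv\in\Z\Delta_P^\vee$. Neither follows from the definition: being a maximal root of $d$ only says $\bev+\Z\Delta_P^\vee\leq d$, not $\bev+\Z\Delta_P^\vee=d$, so the greedy decomposition of $d$ can have several terms. For instance in type $B_2$ with $P=B$ and $\al=\be_1+\be_2$ one has $\alv=2\be_1^\vee+\be_2^\vee$, the unique maximal root is $\ga=\be_1+2\be_2$ with $\gav=\be_1^\vee+\be_2^\vee$, and the greedy decomposition is $(\ga,\be_1)$; here $z_{\alv}=s_\ga\cdot s_{\be_1}=w_0\neq s_\ga$. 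The paper closes this gap by proving Theorem~\ref{thm:zdP_length} \emph{simultaneously}: once the inequality $\ell(s_\ga W_P)\leq(c_1(T_X),\gav)-1$ is in hand, an induction on $d$ shows that if the greedy decomposition of $d$ has more than one term then $\ell(z_d^P)\leq(c_1(T_X),d)-2$. Hence $\ell(z_{\alv}^P)=(c_1(T_X),\alv)-1$ forces $\alv\equiv\gav$ in $H_2(X)$ for a single $P$-cosmall $\ga$, after which your chain of equalities and Lemma~\ref{lem:alpha_unique} apply.

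For (a)$\Rightarrow$(c) your contrapositive plan is right, but the casework you anticipate is avoidable. When $B\neq\emptyset$ with $\de\in R_P^+$ and $s_\al(\de)\in R^+\setminus R_P^+$, do \emph{not} take $\be'=s_\al(\de)$; take instead $\ga=s_\de(\al)=\al-(\al,\dev)\de$. Since $(\de,\alv)<0$ forces $(\al,\dev)<0$, one gets $\ga>\al$, and $\gav=s_\de(\alv)=\alv-(\de,\alv)\dev\equiv\alv$ in $H_2(X)$ in a single stroke, with no dependence on $(\dev,\al)$. For the failure of (c) via $\ga\in A$ with $(\ga,\alv)\geq 2$, set $\be=-s_\al(\ga)\in R^+$; then $(\al,\gav)=1$, $\bev+\gav=\alv$, and $s_\be(\al)+s_\ga(\al)=(2-(\ga,\alv))\al\leq 0$, so one of $\ga,\be$ strictly dominates $\al$ while its coroot is $\leq\alv$ in $\Z\Delta^\vee$. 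Both cases are handled type-independently; no $\al$-string bookkeeping or classification is needed.
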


\begin{thm}\label{thm:zdP_length}
  Let $0 < d \in H_2(X)$.  Then $\ell(z_d^P) \leq (c_1(T_X),d)-1$.
  Furthermore, if $\ell(z_d^P) = (c_1(T_X),d)-1$, then $d = \alv +
  \Z\Delta_P^\vee$ for a unique $P$-cosmall root $\al$.
\end{thm}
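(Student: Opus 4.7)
My plan is to deduce both parts of the theorem geometrically, combining the identification $X(z_d^P) = \Gamma_d(X(1))$ from Theorem~\ref{thm:main} with an equidimensionality property of $\ev_2$ on the Kontsevich moduli space.  Since $X = G/P$ is a single $G$-orbit, $\ev_2 : \Mb_{0,2}(X,d) \to X$ is $G$-equivariant, so all of its fibers are isomorphic and in particular share the common dimension $\dim \Mb_{0,2}(X,d) - \dim X = (c_1(T_X),d) - 1$.  Therefore
\[
\ell(z_d^P) = \dim \Gamma_d(X(1)) = \dim \ev_1\bigl(\ev_2^{-1}(1.P)\bigr) \leq \dim \ev_2^{-1}(1.P) = (c_1(T_X),d) - 1,
\]
which proves the first assertion.

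For the equality case I would peel off a maximal root.  Assume equality, pick a maximal (hence $P$-cosmall) root $\al$ of $d$, and set $d' = d - \alv \in H_2(X)$.  Theorem~\ref{thm:main} applied to $w = 1$ gives $\Gamma_d(X(1)) = \Gamma_{d'}(X(s_\al))$.  If $d' > 0$, the same fiber-dimension count, applied now to $\Mb_{0,2}(X,d')$, yields $\dim \Gamma_{d'}(X(s_\al)) \leq \dim X(s_\al) + (c_1(T_X),d') - 1$; since $\al$ is $P$-cosmall, Theorem~\ref{thm:Pcosmall} gives $\dim X(s_\al) = \ell(s_\al W_P) = (c_1(T_X),\alv) - 1$, so this simplifies to $(c_1(T_X),d) - 2$, contradicting the assumed equality $\dim \Gamma_d(X(1)) = (c_1(T_X),d) - 1$.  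Hence $d' = 0$ and $d = \alv$ in $H_2(X)$.  For uniqueness, if $\bev = \alv$ in $H_2(X)$ for a second $P$-cosmall root $\be$, then Corollary~\ref{cor:dzP_ieq}(c) forces $s_\al W_P = z_d^P W_P = s_\be W_P$, and Lemma~\ref{lem:alpha_unique} gives $\al = \be$.

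The principal obstacle is the logical interlock with Theorem~\ref{thm:Pcosmall}: my equality argument invokes the implication ``$\al$ is $P$-cosmall $\Rightarrow$ $\ell(s_\al W_P) = (c_1(T_X),\alv) - 1$,'' so the two theorems must be established in parallel rather than in sequence.  I would first derive the inequality in Theorem~\ref{thm:Pcosmall} from the inequality above together with $s_\al W_P \leq z_{\alv}^P W_P$ (Corollary~\ref{cor:dzP_ieq}(a)), and then supply the $P$-cosmall $\Rightarrow$ equality direction combinatorially: Theorem~\ref{thm:Pcosmall}(c) implies that $s_\al$ permutes the set $\{\ga \in R^+ \ssm R^+_P : s_\al(\ga) > 0\}$, so Lemma~\ref{lem:salcancel} makes $\sum_\ga (\ga,\alv)$ vanish over this set; combining with $(\al,\alv) = 2$ and $(\ga,\alv) = 1$ for the remaining inversions (again by Theorem~\ref{thm:Pcosmall}(c)) gives $(c_1(T_X),\alv) = \ell(s_\al W_P) + 1$, closing the loop.
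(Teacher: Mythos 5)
Your geometric route via Theorem~\ref{thm:main} and the constancy of $\ev_2$-fibers is correct and genuinely different from the paper's argument, which is purely combinatorial: the paper proves the inequality by induction on $d$, peeling off a maximal root $\ga$ and using $\ell(z_d^P) = \ell(z_{d-\gav}^P \cdot s_\ga W_P) \leq \ell(z_{d-\gav}^P) + \ell(s_\ga W_P)$ (Proposition~\ref{prop:heckeP}) together with the inequality $\ell(s_\ga W_P) \leq (c_1(T_X),\gav)-1$ from Theorem~\ref{thm:Pcosmall}; when $d - \gav \neq 0$ this gives $\ell(z_d^P) \leq (c_1(T_X),d)-2$ outright, so the equality case and uniqueness drop out for free.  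Your fiber-dimension argument buys the same inequality without any induction and explains where it comes from geometrically, which is nice; the cost is that you need Theorem~\ref{thm:main}, whereas the paper's proof of this result sits upstream of the geometry.

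There are, however, two points you should tighten.  First, your peeling argument does not actually need the equality $\dim X(s_\al) = (c_1(T_X),\alv)-1$ for the $P$-cosmall root $\al$; the inequality $\dim X(s_\al) = \ell(s_\al W_P) \leq (c_1(T_X),\alv)-1$, which you already derived from $s_\al W_P \leq z_{\alv}^P W_P$ and the fiber-dimension bound, gives $\dim \Gamma_{d'}(X(s_\al)) \leq (c_1(T_X),\alv)-1 + (c_1(T_X),d')-1 = (c_1(T_X),d)-2$ for $d' > 0$.  Using the inequality removes the dependence on the implication (a)~$\Rightarrow$~(b) of Theorem~\ref{thm:Pcosmall} entirely, just as the paper's proof does, and the whole delicate ``parallel establishment'' you worry about becomes unnecessary.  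Second, and relatedly, your proposed ``closing the loop'' paragraph is not a proof of (a)~$\Rightarrow$~(b): you cite condition~(c) as a known property of $P$-cosmall roots and then derive~(b), so you have only shown (c)~$\Rightarrow$~(b).  The implication (a)~$\Rightarrow$~(c) is the hardest part of the proof of Theorem~\ref{thm:Pcosmall} (requiring a separate case analysis on whether some inversion $\ga$ has $(\ga,\alv)\geq 2$ or the set $(R^+\ssm R^+_P) \cap s_\al(R^+_P)$ is nonempty), and you have not supplied it.  Fortunately, as noted above, you do not need it.

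Finally, for uniqueness you invoke Corollary~\ref{cor:dzP_ieq}(c) and Lemma~\ref{lem:alpha_unique}; this is fine and a bit more explicit than the paper's appeal to uniqueness of greedy decompositions, though logically equivalent.
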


\begin{proof}[Proof of Theorems \ref{thm:Pcosmall} and \ref{thm:zdP_length}]
  Given $\al \in R^+ \ssm R^+_P$ we consider the sets $A = I(s_\al)
  \ssm (R^+_P \cup \{\al\})$, $B = (R^+ \ssm R^+_P) \cap
  s_\al(R^+_P)$, and $C = (R^+ \ssm R^+_P) \cap s_\al(R^+ \ssm
  R^+_P)$.  Since $R^+ \ssm R^+_P$ is the disjoint union of $\{\al\}$,
  $A$, $B$, and $C$, we obtain from (\ref{eqn:c1T}) that
  \[
  (c_1(T_X),\alv) = 2 + \sum_{\ga \in A} (\ga,\alv) + \sum_{\ga \in B}
  (\ga,\alv) + \sum_{\ga \in C} (\ga,\alv) \,.
  \]
  Notice that $|A| = \ell(s_\al W_P)-1$, $(\ga,\alv) \geq 1$ for all
  $\ga \in A \cup B$, and Lemma~\ref{lem:salcancel} implies that
  $\sum_{\ga\in C} (\ga,\alv) = 0$.  The inequality $\ell(s_\al W_P)
  \leq (c_1(T_X),\alv)-1$ and the equivalence of (b) and (c) in
  Theorem~\ref{thm:Pcosmall} follow from this.
  
  We next prove Theorem~\ref{thm:zdP_length}.  Let $0 < d \in H_2(X)$
  and let $\ga$ be a maximal root of $d$.  If $d-\gav \neq 0 \in
  H_2(X)$, then we obtain by induction that $\ell(z_d^P) =
  \ell(z_{d-\gav}^P \cdot s_\ga W_P) \leq \ell(z_{d-\gav}^P) +
  \ell(s_\ga W_P) \leq (c_1(T_X),d-\gav)-1 + (c_1(T_X),\gav)-1 =
  (c_1(T_X),d)-2$.  We deduce that, if $\ell(z_d^P) = (c_1(T_X),d)-1$,
  then $d = \gav + \Z\Delta_P^\vee$ in $H_2(X)$.  The uniqueness of
  $\ga$ follows because the greedy decomposition $(\ga)$ of $d$ is
  unique.

  Assume that $\al \in R^+ \ssm R^+_P$ satisfies $\ell(s_\al W_P) =
  (c_1(T_X),\alv) - 1$.  Since $s_\al W_P \leq z_{\alv}^P W_P$, we
  deduce from Theorem~\ref{thm:zdP_length} that $\alv = \gav \in
  H_2(X)$ for a $P$-cosmall root $\ga \in R^+ \ssm R^+_P$, and we have
  $s_\al W_P = z_{\alv}^P W_P = s_\ga W_P$.
  Lemma~\ref{lem:alpha_unique} therefore implies that $\al = \ga$.
  This proves the implication (b) $\Rightarrow$ (a) in
  Theorem~\ref{thm:Pcosmall}.

  On the other hand, assume that condition (c) fails.  If there exists
  a root $\ga \in A$ with $(\ga,\alv) \geq 2$, then $\al$ is short,
  $\ga$ is long, and $(\al,\gav) = 1$.  Set $\be = - s_\al(\ga) \in
  R^+$.  Then $\bev + \gav = \gav - s_\al(\gav) = \alv$, and
  $(\al,\bev) = (\al,\alv-\gav) = 1$.  Since $|\be| = |\ga|$ we obtain
  $s_\be(\al) + s_\ga(\al) = 2\al - (\be+\ga) = 2\al -
  \frac{|\ga|^2}{|\al|^2} \al \leq 0$.  Up to interchanging $\be$ and
  $\ga$, this implies that $s_\ga(\al) = \al-\ga < 0$, so $\al < \ga$
  and $\gav \leq \alv \in H_2(X)$.  This shows that $\al$ is not
  $P$-cosmall.  Otherwise $B \neq \emptyset$ and we can choose $\be
  \in R^+_P$ such that $s_\al(\be) \in R^+ \ssm R^+_P$.  Since the
  support of $s_\al(\be)$ is not contained in the support of $\be$, we
  must have $(\be,\alv) < 0$.  Set $\ga = s_\be(\al) = \al -
  (\al,\bev)\be > \al$.  Then $\gav = s_\be(\alv) = \alv -
  (\be,\alv)\bev = \alv \in H_2(X)$, hence $\al$ is not $P$-cosmall.
  This establishes the implication (a) $\Rightarrow$ (c) and completes
  the proof.
\end{proof}

\begin{remark}
  Theorem~\ref{thm:Pcosmall} implies that $\ell(s_\al) = 2
  \min(\height(\al),\height(\alv)) - 1$ for each $\al \in R^+$, where
  $\height(\al)$ is the sum of the coefficients obtained when $\al$ is
  expressed in the basis of simple roots.  In fact, we have
  $(c_1(T_{G/B}),\alv) = 2 \height(\alv)$, and either $\al$ or $\alv$
  is cosmall.
\end{remark}

The following criterion has so far only been proved for $B$-cosmall
roots, but we believe that it holds for $P$-cosmall roots when the
root system $R$ is simply laced.

\begin{prop}
  Let $\al \in R^+$.  Then $\al$ is cosmall if and only if $z_\alv =
  s_\al$.
\end{prop}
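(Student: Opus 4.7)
The plan is to deduce both directions directly from results already established in Section~\ref{sec:comb_zd}, so there is essentially no new technical work required.

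For the forward direction, assume $\al \in R^+$ is cosmall. By definition this means $\al$ is a maximal root of $\alv$. Lemma~\ref{lem:uniquemax}(a) asserts that $\alv$ has exactly one maximal root, so $\al$ is the \emph{only} maximal root of $\alv$. Hence the unique greedy decomposition of $\alv$ is the one-term sequence $(\al)$, and from the recursive definition of $z_d$ we obtain $z_\alv = s_\al \cdot z_0 = s_\al$.

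For the reverse direction, I would argue by contrapositive: if $\al$ is not cosmall, then Lemma~\ref{lem:replace}(a) gives the strict inequality $s_\al < z_\alv$ in the Bruhat order, so in particular $z_\alv \neq s_\al$. Combining the two directions proves the biconditional.

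The main obstacle is already absorbed into Lemma~\ref{lem:replace}(a), whose proof required the case-by-case analysis in Section~\ref{sec:comb_zd}; once that input is available, the proposition itself is essentially a direct consequence together with the uniqueness of the maximal root from Lemma~\ref{lem:uniquemax}(a). It is worth noting that this proposition provides a satisfying intrinsic characterization of the cosmall condition: the element $z_\alv$ realizes the Bruhat-theoretic cost of reaching a curve neighborhood of degree $\alv$ from a point, and $\al$ being cosmall is precisely the statement that a single reflection $s_\al$ suffices.
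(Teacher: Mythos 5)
Your proof is correct and takes essentially the same approach as the paper, which proves the proposition in one line by citing Lemma~\ref{lem:replace}(a) together with the definition of $z_{\alv}$. Your invocation of Lemma~\ref{lem:uniquemax}(a) in the forward direction is a harmless elaboration: once $\al$ is a maximal root of $\alv$, the recursive definition already yields $z_{\alv} = s_\al \cdot z_0 = s_\al$, whichever maximal root one chooses, since greedy decompositions are well-defined.
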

\begin{proof}
  This follows from Lemma~\ref{lem:replace}(a) and the definition of
  $z_{\alv}$.
\end{proof}

\begin{conj}
  Assume that $R$ is simply laced and let $\al \in R^+ \ssm R^+_P$.
  Then $\al$ is $P$-cosmall if and only if $z_{\alv}^P W_P = s_\al
  W_P$.
\end{conj}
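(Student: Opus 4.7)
The forward implication requires no hypothesis on $R$. If $\al$ is $P$-cosmall, then the length-one sequence $(\al)$ is a greedy decomposition of $\alv \in H_2(X)$, so by the definition of $z_d^P$ we have $z_\alv^P\, w_P = s_\al \cdot w_P$. Writing $s_\al = u v$ with $u \in W^P$ the minimal representative of $s_\al W_P$ and $v \in W_P$, Proposition~\ref{prop:hecke}(e) yields $s_\al \cdot w_P = u \cdot v \cdot w_P = u\, w_P$, which is the maximal representative of the coset $s_\al W_P$. Hence $z_\alv^P = u$ and $z_\alv^P W_P = s_\al W_P$.

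For the converse, assume $R$ is simply laced and $z_\alv^P W_P = s_\al W_P$, and argue by contradiction: suppose $\al$ is not $P$-cosmall. By definition of $P$-cosmall there is then a root $\ga > \al$ in $R^+ \ssm R^+_P$ with $\gav + \Z\Delta_P^\vee \leq \alv + \Z\Delta_P^\vee$; choosing $\ga$ maximal with this property, $\ga$ is itself $P$-cosmall.

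The key step is to upgrade the inequality $\gav + \Z\Delta_P^\vee \leq \alv + \Z\Delta_P^\vee$ to an equality in $H_2(X)$, and this is the single point where simply-lacedness is used. Since every root in a simply laced system is cosmall, Lemma~\ref{lem:cover}(a) applied to $\al < \ga$ gives $\alv < \gav$ in $\Z\Delta^\vee$; that is, $\gav - \alv = \sum_{\be \in \Delta} c_\be \bev$ with $c_\be \geq 0$ and not all zero. On the other hand, $\alv - \gav \geq 0$ in $H_2(X) = \Z\Delta^\vee / \Z\Delta_P^\vee$ says that $\alv - \gav$ is a nonnegative integer combination of $\{\bev + \Z\Delta_P^\vee : \be \in \Delta \ssm \Delta_P\}$, which forces $c_\be = 0$ for each $\be \in \Delta\ssm\Delta_P$. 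Therefore $\gav - \alv \in \Z\Delta_P^\vee$, i.e.\ $\gav = \alv \in H_2(X)$.

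Since $z_d^P$ depends only on the class $d \in H_2(X)$, we obtain $z_\alv^P = z_\gav^P$. Applying the forward direction to the $P$-cosmall root $\ga$ yields $z_\gav^P W_P = s_\ga W_P$, and hence $s_\al W_P = z_\alv^P W_P = s_\ga W_P$. Lemma~\ref{lem:alpha_unique} now forces $\al = \ga$, contradicting $\ga > \al$. The main obstacle---and the only reason the simply laced hypothesis is needed---is the equivalence of the root and coroot orders supplied by Lemma~\ref{lem:cover}(a); in multiply laced types this equivalence fails (for example in $B_2$, $\be_1 + \be_2 < \be_1 + 2\be_2$ while $(\be_1 + 2\be_2)^\vee < (\be_1 + \be_2)^\vee$), and the upgrade to an equality in $H_2(X)$ breaks down.
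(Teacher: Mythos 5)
The statement you are addressing is not proved in the paper: it is stated as a \emph{Conjecture}, and the authors explicitly say the criterion ``has so far only been proved for $B$-cosmall roots'' (which is the preceding Proposition). So there is no argument of the authors to compare against; you appear to have supplied a proof.

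Your argument looks correct, and I cannot find a gap. The forward implication is, as you note, independent of simply-lacedness: if $\al$ is $P$-cosmall then $(\al)$ is the unique greedy decomposition of $\alv+\Z\Delta_P^\vee$, so $z_{\alv}^P w_P = s_\al\cdot w_P$, and since $s_\al\cdot w_P = u\cdot v\cdot w_P = u\,w_P$ (where $s_\al = uv$ with $u\in W^P$, $v\in W_P$) is the maximal representative of $s_\al W_P$, we get $z_{\alv}^P W_P = s_\al W_P$. One small bookkeeping point: the justification for $v\cdot w_P = w_P$ is more directly Proposition~\ref{prop:hecke}(d) (which gives $w_P\le v\cdot w_P\in W_P$) than part (e). For the converse you isolate the role of simply-lacedness precisely: choosing $\ga>\al$ to be a maximal root of $\alv+\Z\Delta_P^\vee$ (and hence $P$-cosmall), the coroot map preserves the root order, so $\gav-\alv\ge 0$ in $\Z\Delta^\vee$; combined with $\gav\le\alv$ in $H_2(X)$ this forces $\gav-\alv\in\Z\Delta_P^\vee$, i.e.\ $\gav=\alv$ in $H_2(X)$. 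Applying the forward direction to $\ga$ then gives $z_{\alv}^P W_P = z_{\gav}^P W_P = s_\ga W_P$, and Lemma~\ref{lem:alpha_unique} yields $\al=\ga$, contradicting $\ga>\al$. Your diagnosis of why the argument breaks in multiply-laced types matches the paper's Example~\ref{ex:counter}: there $\al=\be_1+\be_2 < \ga=\be_1+2\be_2$ but $\alv > \gav$, so the upgrade to $\gav=\alv$ in $H_2(X)$ fails. If this holds up under further scrutiny it settles the conjecture, so it would be worth checking the argument once more against the exceptional simply-laced types $E_6$, $E_7$, $E_8$ for a few non-$B$ parabolics.
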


\begin{example}\label{ex:counter}
  Let $G$ be a group of type $B_2$, let $\Delta = \{\be_1, \be_2\}$,
  $\al = \be_1+\be_2$, and $\ga = \be_1+2\be_2$ be as in
  Example~\ref{ex:moment_B2}, and let $P \subset G$ be the parabolic
  subgroup defined by $\Delta_P = \{ \be_2 \}$.  Then $s_\al W_P = s_1
  s_2 s_1 W_P = w_0 W_P$ is the longest coset, hence $s_\al W_P =
  z_\alv^P W_P$.  However, the greedy decomposition of $\alv +
  \Z\Delta_P^\vee$ is $(\ga,\ga)$, so $\al$ is not $P$-cosmall.  In
  fact, Example~\ref{ex:moment_B2} shows that $\al$ is not even
  $B$-cosmall.
\end{example}

The following definition together with
Proposition~\ref{prop:small_cosmall} below is our reason for choosing
the name `{\em cosmall\/}' in section~\ref{sec:comb_zd}.

\begin{defn}
  A positive root $\al \in R^+$ is {\em large\/} if it is long and can
  be written as the sum of two short positive roots.  Otherwise $\al$
  is {\em small}.
\end{defn}

\begin{prop}\label{prop:small_cosmall}
  Let $\al \in R^+$.  Then $\al$ is cosmall if and only if the coroot
  $\alv$ is a small root of the dual root system $R^\vee$.
\end{prop}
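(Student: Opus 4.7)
The plan is to verify the equivalence case-by-case using the classification of root systems, paralleling the strategy of Lemmas~\ref{lem:uniquemax}, \ref{lem:cover}, and \ref{lem:replace}. In simply-laced types, every positive root is cosmall (by Example~\ref{ex:classical} for $A_{n-1}, D_n$, and direct verification for $E_6, E_7, E_8$), and under the convention that ``long'' and ``short'' presuppose a non-trivial length ratio, no root in $R^\vee$ is large; hence both sides of the equivalence hold vacuously.

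For the non-simply-laced types, I would compare the list of cosmall roots (Examples~\ref{ex:classical}, \ref{ex:cosmall_g2}, \ref{ex:cosmall_f4}) against the condition that $\alv$ is long in $R^\vee$ (equivalently, $\al$ is short in $R$) and decomposes as $\alv = \delta^\vee + \epsilon^\vee$ for two long positive roots $\delta, \epsilon$ of $R$. The key observation is that for each non-cosmall short positive root $\al$, the greedy decomposition of $\alv$ has exactly two terms, both of which are long positive roots of $R$; their coroots are then short positive coroots of $R^\vee$, exhibiting $\alv$ as large. In type $B_n$ the non-cosmall roots are $\al = e_i$ with $i \geq 2$, and the decomposition is $(e_i)^\vee = 2e_i = (e_i+e_{i-1})^\vee + (e_i-e_{i-1})^\vee$. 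In type $C_n$ the non-cosmall roots are $\al = e_j+e_i$, with $(e_j+e_i)^\vee = (2e_j)^\vee + (2e_i)^\vee$. The cosmall roots are handled dually: either $\al$ is long (so $\alv$ is short in $R^\vee$, hence not large) or any candidate decomposition of $\alv$ fails to have both summands be short positive coroots of $R^\vee$; in $B_n$, the remaining case is $\al = e_1$, where a simple $e_1$-coefficient count rules out all candidate decompositions in $C_n$.

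The types $F_4$ and $G_2$ are then handled by direct enumeration; $G_2$ has only 2 non-cosmall short positive roots and $F_4$ has 9, and for each the greedy decomposition of $\alv$ in $R^\vee$ produces a two-term decomposition into coroots of long positive roots of $R$. The main obstacle is the bookkeeping for $F_4$, which requires enumerating the 12 long and 12 short positive roots in the $\R^4$-realization and checking each of the 9 non-cosmall short roots individually, but this is a finite and elementary computation.
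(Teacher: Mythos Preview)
Your plan is correct and would work, but it takes a substantially different route from the paper's proof. The paper gives a short, classification-free argument built on Theorem~\ref{thm:Pcosmall}: with $P=B$, criterion~(c) there says that $\al$ fails to be cosmall exactly when some $\ga\in I(s_\al)\setminus\{\al\}$ has $(\ga,\alv)\geq 2$. The paper then shows directly that this condition is equivalent to $\alv$ being large in $R^\vee$. If $\alv$ is large, write $\alv=\bev+\gav$ with $\be,\ga$ long positive; from $(\al,\bev)+(\al,\gav)=2$ and $(\al,\bev),(\al,\gav)\leq 1$ one gets $(\al,\gav)=1$, hence $(\ga,\alv)\geq 2$, and $s_\al(\gav)=-\bev$ forces $\ga\in I(s_\al)\setminus\{\al\}$. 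Conversely, given such a $\ga$, set $\be=-s_\al(\ga)\in R^+$; then $\al$ is short, $\be$ and $\ga$ are long, and $\bev=\alv-\gav$, exhibiting $\alv$ as large.

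Your case-by-case verification is more self-contained in that it avoids Theorem~\ref{thm:Pcosmall}, but it is considerably longer and offers no structural explanation for the equivalence. Two points to tighten: (i) your converse direction (cosmall $\Rightarrow$ $\alv$ small) is fully argued only in type $B_n$; you must also check the short cosmall roots in $C_n$ (namely $e_j-e_i$), in $G_2$ (namely $\be_1$), and in $F_4$ (namely $\be_3$, $\be_4$, $\be_3+\be_4$) admit no decomposition of $\alv$ into two short positive roots of $R^\vee$ --- routine, but it should be said; (ii) your assertion that the greedy decomposition of $\alv$ for non-cosmall $\al$ always has exactly two terms, both long, is not something proved earlier in the paper for $G_2$ or $F_4$, so your enumeration must supply it. The paper's uniform argument sidesteps all of this and makes clear \emph{why} the two notions coincide: a witness $\ga$ to the failure of criterion~(c) and its reflection $-s_\al(\ga)$ are precisely the two short coroots summing to $\alv$.
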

\begin{proof}
  By Theorem~\ref{thm:Pcosmall} it is enough to show that $\alv$ is
  large if and only if there exists a root $\ga \in I(s_\al) \ssm
  \{\al\}$ for which $(\ga,\alv) \geq 2$.  If $\alv$ is large, then
  $\al$ is short and we can write $\alv = \bev + \gav$ where $\be$ and
  $\ga$ are long positive roots.  Since we have $(\al,\bev)\leq 1$,
  $(\al,\gav)\leq 1$, and $(\al,\bev) + (\al,\gav) = (\al,\alv) = 2$,
  we deduce that $(\al,\gav)=1$ and $(\ga,\alv) \geq 2$.  Finally,
  since $s_\al(\gav) = \gav - \alv = -\bev$, it follows that $\ga \in
  I(s_\al) \ssm \{\al\}$.  Conversely, if $\ga \in I(s_\al) \ssm
  \{\al\}$ satisfies $(\ga,\alv) \geq 2$, then set $\be = -s_\al(\ga)
  \in R^+$.  Then $\al$ is short, $\ga$ and $\be$ are long, and $\bev
  = -s_\al(\gav) = \alv-\gav$.  This shows that $\alv$ is large.
\end{proof}


\section{Gromov-Witten invariants}\label{sec:gw_inv}

Given $w \in W$ and $d \in H_2(X)$, define the Gromov-Witten
variety
\[
\GW_d(w) = \ev_2^{-1}(X(w)) \ \subset \Mb_{0,2}(X,d) \,.
\]
We have $\dim \GW_d(w) = \ell(w W_P) + (c_1(T_X),d) - 1$, and
Theorem~1 implies that $\ev_1(\GW_d(w)) = \Gamma_d(X(w)) = X(w \cdot
z_d^P)$.  We need the following consequence of
\cite[Prop.~3.3]{buch.chaput.ea:finiteness}.

\begin{prop}[\cite{buch.chaput.ea:finiteness}]\label{prop:loctriv}
  The variety $\GW_d(w)$ is unirational, and the evaluation map $\ev_1
  : \GW_d(w) \to \Gamma_d(X(w))$ is a locally trivial fibration over
  the open $B$-orbit in $\Gamma_d(X(w))$.  In particular, the general
  fibers of $\ev_1$ are unirational.
\end{prop}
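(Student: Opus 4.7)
The plan is to derive all three assertions from \cite[Prop.~3.3]{buch.chaput.ea:finiteness} by a $B$-equivariance argument. The key ingredients are: the irreducibility of the Kontsevich moduli space $\Mb_{0,2}(X,d)$; the $B$-equivariance of $\ev_1$ and $\ev_2$; the identification $\Gamma_d(X(w)) = X(w \cdot z_d^P)$ from Theorem~\ref{thm:main}; and the fact that the relevant closed subgroups of $B$ are special in Serre's sense, so that principal bundles for such subgroups are Zariski locally trivial.

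First I would establish the locally trivial fibration statement. Set $u_0 = (w \cdot z_d^P).P$ and let $B_{u_0} \subset B$ be its isotropy subgroup. By Theorem~\ref{thm:main}, the open $B$-orbit in $\Gamma_d(X(w))$ is $U = B.u_0 \cong B/B_{u_0}$. The $B$-equivariance of $\ev_1$ then yields a canonical isomorphism
\[
\ev_1^{-1}(U) \;\cong\; B \times^{B_{u_0}} F, \qquad F := \ev_1^{-1}(u_0).
\]
Specialness of $B_{u_0}$ makes the projection $B \to B/B_{u_0}$ Zariski locally trivial, so $\ev_1^{-1}(U) \to U$ is a Zariski locally trivial fibration with fiber $F$. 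In particular, every fiber of $\ev_1$ over a point of the dense orbit $U$ is isomorphic to $F$.

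Next, for the unirationality statements, I would reduce both $\GW_d(w)$ and the general fiber of $\ev_1$ to unirationality of the single variety $F$. Irreducibility of $\Mb_{0,2}(X,d)$ implies that $\GW_d(w) = \ev_2^{-1}(X(w))$ is irreducible; hence the open subset $\ev_1^{-1}(U)$ is dense in it, and the preceding display reduces unirationality of $\GW_d(w)$ to unirationality of $F$ (using that $B/B_{u_0}$ is an open subset of a rational homogeneous variety). But $F$ consists, up to a $G$-translation carrying $u_0$ to $1.P$, of two-pointed stable maps of degree $d$ sending the first marked point to $1.P$ and the second to $X(w)$, and this is precisely the type of evaluation fiber shown to be unirational in \cite[Prop.~3.3]{buch.chaput.ea:finiteness}.

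The only delicate point I expect is the appeal to specialness of $B_{u_0}$ in order to upgrade $B$-equivariance to an honest Zariski-local triviality rather than merely an étale-local one; this is classical for connected solvable linear groups but would need to be handled carefully if $B_{u_0}$ turns out to be disconnected. Everything else in the argument is either cited directly from \cite{buch.chaput.ea:finiteness} or is a formal consequence of equivariance.
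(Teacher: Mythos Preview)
The paper does not supply its own proof of this proposition; it is simply recorded as a consequence of \cite[Prop.~3.3]{buch.chaput.ea:finiteness}. So there is no proof in the paper to compare your outline against, and your proposal can only be assessed on its own terms.

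Your outline is essentially the standard equivariance argument and is sound. Two small points deserve tightening. First, the implication ``$\Mb_{0,2}(X,d)$ irreducible $\Rightarrow$ $\GW_d(w)=\ev_2^{-1}(X(w))$ irreducible'' is not automatic: preimages of irreducible sets need not be irreducible. The missing line is that $\ev_2$ is $G$-equivariant and $G$ acts transitively on $X$, so $\ev_2$ is a locally trivial fibration with irreducible fiber; then the preimage of the irreducible variety $X(w)$ is irreducible. Second, your specialness worry resolves itself once you observe that the point $u_0$ you chose is $T$-fixed, so $T\subset B_{u_0}\subset B$; over $\C$ such a group is connected solvable, hence special.

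One further remark: you do not actually need Theorem~\ref{thm:main} to know that the dense $B$-orbit in $\Gamma_d(X(w))$ has the form $B.u_0$ for a $T$-fixed point $u_0$; this already follows from the fact, proved in \cite{buch.chaput.ea:finiteness}, that $\Gamma_d(X(w))$ is a Schubert variety. Your use of Theorem~\ref{thm:main} is not circular within the present paper, but the result you are proving predates it, and the argument in \cite{buch.chaput.ea:finiteness} certainly did not rely on it. Also, in your final translation step the translate $g^{-1}.X(w)$ is no longer $B$-stable, so make sure the generality of \cite[Prop.~3.3]{buch.chaput.ea:finiteness} that you invoke covers arbitrary closed irreducible subvarieties and not just Schubert varieties.
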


\begin{thm}\label{thm:push}
  Let $w \in W^P$ and $0 < d \in H_2(X)$.  Then $(\ev_1)_*
  [\GW_d(w)]$ is non-zero in $H^*(X)$ if and only if we have $d =
  \alv + \Z\Delta_P^\vee$ for some root $\al \in R^+$ such that
  $\ell(w s_\al W_P) = \ell(w) + (c_1(T_X),\alv)-1$.  In this case $\al$
  is $P$-cosmall and $(\ev_1)_* [\GW_d(w)] = [X(w s_\al)]$.
\end{thm}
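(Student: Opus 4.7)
My strategy is to reduce the theorem to a dimension count by invoking Proposition~\ref{prop:loctriv}, and then to read off the equality case from the length estimates in Theorems~\ref{thm:Pcosmall} and \ref{thm:zdP_length} together with Proposition~\ref{prop:heckeP}.

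By Proposition~\ref{prop:loctriv}, the map $\ev_1\cc\GW_d(w)\to\Gamma_d(X(w)) = X(w\cdot z_d^P)$ is surjective with unirational general fibers. A unirational variety of dimension zero is a point, so $\ev_1$ is generically one-to-one when $\dim\GW_d(w) = \dim X(w\cdot z_d^P)$ and otherwise has positive-dimensional generic fiber. Consequently $(\ev_1)_*[\GW_d(w)] = [X(w\cdot z_d^P)]$ in the equidimensional case and vanishes otherwise; the theorem therefore reduces to identifying when the two dimensions coincide and what the image is in that situation.

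The dimension chain uses $w\in W^P$ (giving $\ell(wW_P)=\ell(w)$) and $z_d^P\in W^P$ (giving $\ell(z_d^P W_P) = \ell(z_d^P)$), together with Proposition~\ref{prop:heckeP} and Theorem~\ref{thm:zdP_length}:
\[
\dim X(w\cdot z_d^P) \;=\; \ell(w\cdot z_d^P W_P) \;\le\; \ell(w)+\ell(z_d^P) \;\le\; \ell(w)+(c_1(T_X),d)-1 \;=\; \dim\GW_d(w).
\]
Equidimensionality forces both inner inequalities to be equalities. By Theorem~\ref{thm:zdP_length}, the right-hand equality forces $d = \alv+\Z\Delta_P^\vee$ for a unique $P$-cosmall root $\al$, and then the greedy decomposition $(\al)$ of $d$ gives $z_d^P W_P = s_\al W_P$ with $\ell(s_\al W_P) = (c_1(T_X),\alv)-1$ by Theorem~\ref{thm:Pcosmall}. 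The left-hand equality, combined with the final clause of Proposition~\ref{prop:heckeP}, then upgrades the Hecke product $w\cdot z_d^P W_P$ to the ordinary coset product $w z_d^P W_P = w s_\al W_P$, so $X(w\cdot z_d^P) = X(ws_\al)$ and $\ell(ws_\al W_P) = \ell(w) + (c_1(T_X),\alv) - 1$.

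For the converse I would assume the hypotheses on $\al$ and run the chain
\[
\ell(w s_\al W_P) \;\le\; \ell(w)+\ell(s_\al W_P) \;\le\; \ell(w)+(c_1(T_X),\alv)-1
\]
(first step from Proposition~\ref{prop:heckeP}, second from Theorem~\ref{thm:Pcosmall}); equality throughout gives $\ell(s_\al W_P) = (c_1(T_X),\alv)-1$, whence $\al$ is $P$-cosmall by Theorem~\ref{thm:Pcosmall}. Then $z_d^P W_P = s_\al W_P$ and the two dimensions match, so the first step of the argument delivers $(\ev_1)_*[\GW_d(w)] = [X(ws_\al)]$. No real obstacle arises here: the combinatorial heart is already contained in the preceding length-equality theorems, and the present proof is essentially the bookkeeping that translates between the Hecke-product form $w\cdot z_d^P$ supplied by Theorem~1 and the explicit coset $ws_\al W_P$ that shows up in the statement.
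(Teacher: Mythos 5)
Your proof is correct and follows essentially the same route as the paper: Proposition~\ref{prop:loctriv} to reduce to a dimension count, Theorem~\ref{thm:zdP_length} together with Proposition~\ref{prop:heckeP} to extract the unique $P$-cosmall $\al$ and rewrite $w\cdot z_d^P W_P = ws_\al W_P$, and the reverse length chain with Theorem~\ref{thm:Pcosmall} for the converse. The only detail the paper leaves implicit that you spell out is the observation that a zero-dimensional unirational fiber is a point, which justifies $(\ev_1)_*[\GW_d(w)] = [\Gamma_d(X(w))]$ in the equidimensional case.
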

\begin{proof} 
  Assume that $(\ev_1)_*[\GW_d(w)] \neq 0$.  Then the inequalities
  $\dim X(w \cdot z_d^P) \leq \ell(w)+\ell(z_d^P) \leq \ell(w) +
  (c_1(T_X),d)-1 = \dim \GW_d(w)$ must be equalities, and
  Theorem~\ref{thm:zdP_length} implies that $d = \alv+\Z\Delta_P^\vee
  \in H_2(X)$ for some $P$-cosmall root $\al$.  Since $\ell(w \cdot
  z_d^P W_P) = \ell(w)+\ell(z_d^P W_P)$ and $z_d^P W_P = s_\al W_P$,
  we obtain from Proposition~\ref{prop:heckeP} that $\Gamma_d(X(w)) =
  X(w \cdot s_\al) = X(w s_\al)$.  Finally,
  Proposition~\ref{prop:loctriv} implies that $(\ev_1)_*[\GW_d(w)] =
  [\Gamma_d(X(w))]$.  On the other hand, if $\al \in R^+$ satisfies $d
  = \alv \in H_2(X)$ and $\ell(w s_\al W_P) = \ell(w) +
  (c_1(T_X),d)-1$, then the inequalities $\ell(w s_\al W_P) \leq
  \ell(w) + \ell(s_\al W_P) \leq \ell(w) + (c_1(T_X),d)-1$ must be
  equalities, so Theorem~\ref{thm:Pcosmall} implies that $\al$ is
  $P$-cosmall.  Similarly, the inequalities $\ell(w s_\al W_P) \leq
  \dim X(w \cdot z_d^P) \leq \dim \GW_d(w)$ are equalities, hence
  $(\ev_1)_* [\GW_d(w)] \neq 0$.
\end{proof}

Let $H^*_T(X)$ denote the $T$-equivariant cohomology ring of $X$.
Each $T$-stable closed subvariety $Z \subset X$ defines an equivariant
class $[Z] \in H^*_T(X)$.  Pullback along the structure morphism $X
\to \{\pt\}$ gives $H^*_T(X)$ the structure of an algebra over the
ring $\La := H^*_T(\pt)$, and $H^*_T(X)$ is a free $\La$-module with
basis $\{ [Y(w)] : w \in W^P \}$.  For any class $\Omega \in H_T^*(X)$
we let $\int_X \Omega \in \La$ denote the proper pushforward of
$\Omega$ along the structure morphism $X \to \{\pt\}$.  The Kontsevich
space $\Mb_{0,n}(X,d)$ has a natural $T$-action given by $(t.f)(y) =
t.f(y)$ for any stable map $f : C \to X$ and $t \in T$, and the
evaluation maps $\ev_i : \Mb_{0,n}(X,d) \to X$ are $T$-equivariant.
Given classes $\Omega_1, \dots, \Omega_n \in H^*_T(X)$ and $d \in
H_2(X)$, the associated {\em equivariant Gromov-Witten invariant\/} is
defined by
\[
I_d(\Omega_1,\dots,\Omega_n) = \int_{\Mb_{0,n}(X,d)} \ev_1^*(\Omega_1)
\cdot \ev_2^*(\Omega_2) \cdots \ev_n^*(\Omega_n) \ \in \La \,.
\]
Notice that Theorem~\ref{thm:push} holds for the equivariant class
$(\ev_1)_* [\GW_d(w)] \in H_T^*(X)$, with the same proof.

\begin{cor}\label{cor:2gw}
  Let $w, u \in W^P$ and $0 < d \in H_2(X)$.  The two-point
  Gromov-Witten invariant $I_d([Y(u)], [X(w)])$ is non-zero if and
  only if there exists a root $\al \in R^+ \ssm R^+_P$ such that $d =
  \alv + \Z \Delta_P^\vee$, $\ell(w s_\al W_P) = \ell(w) +
  (c_1(T_X),\alv)-1$, and $u W_P = w s_\al W_P$.  In this case $\al$
  is $P$-cosmall and $I_d([Y(u)], [X(w)]) = 1$.
\end{cor}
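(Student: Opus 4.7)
The idea is to convert the Gromov--Witten integral into a pairing on $X$ via the projection formula, apply the equivariant version of Theorem~\ref{thm:push}, and finish with equivariant Poincar\'e duality for Schubert classes. Concretely, since $\Mb_{0,2}(X,d)$ is irreducible and $\ev_2$ is a $G$-equivariant morphism onto the homogeneous space $X$, the map $\ev_2$ is flat, so $\ev_2^*[X(w)] = [\GW_d(w)]$ in $H_T^*(\Mb_{0,2}(X,d))$. The projection formula then yields
\[
I_d([Y(u)],[X(w)]) = \int_X [Y(u)] \cdot (\ev_1)_*[\GW_d(w)].
\]

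The equivariant analog of Theorem~\ref{thm:push} (asserted in the remark immediately following that theorem) now identifies $(\ev_1)_*[\GW_d(w)]$ as either $0$ or $[X(ws_\al)]$, according to whether a root $\al$ meeting the hypotheses of Theorem~\ref{thm:push} exists; in the latter case $\al$ is $P$-cosmall with $d = \alv + \Z\Delta_P^\vee$ and $\ell(ws_\al W_P) = \ell(w)+(c_1(T_X),\alv)-1$, and its uniqueness follows from Theorem~\ref{thm:zdP_length}. Let $v \in W^P$ be the minimal representative of $ws_\al W_P$; by Proposition~\ref{prop:heckeP} together with the length condition, we have $vW_P = ws_\al W_P$.

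The corollary then follows from the standard identity $\int_X [Y(u)] \cdot [X(v)] = \delta_{u,v}$ for $u,v \in W^P$ in equivariant cohomology, which shows that $I_d([Y(u)],[X(w)])$ equals $1$ exactly when $uW_P = ws_\al W_P$ and $0$ otherwise. The main delicate point in executing this plan is the flatness of $\ev_2$ required to identify $\ev_2^*[X(w)]$ with $[\GW_d(w)]$; this is handled by $G$-equivariance, transitivity of $G$ on $X$, and the irreducibility of $\Mb_{0,2}(X,d)$.
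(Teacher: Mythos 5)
Your proof is correct and follows essentially the same route as the paper's: apply the projection formula to rewrite $I_d([Y(u)],[X(w)]) = \int_X [Y(u)] \cdot (\ev_1)_*[\GW_d(w)]$, invoke the equivariant form of Theorem~\ref{thm:push} (which the paper notes holds with the same proof), and conclude with equivariant Poincar\'e duality of opposite Schubert classes. You add useful supporting detail that the paper leaves implicit, namely that $\ev_2^*[X(w)] = [\GW_d(w)]$ because $\ev_2$ is flat; note though that irreducibility of $\Mb_{0,2}(X,d)$ alone does not give flatness---one also needs that $\Mb_{0,2}(X,d)$ has only quotient singularities and hence is Cohen--Macaulay, so miracle flatness applies once the fiber dimension is constant by $G$-equivariance. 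The sentence about Proposition~\ref{prop:heckeP} producing $vW_P = ws_\al W_P$ is a small fumble ($v$ was already defined as the minimal representative of that coset), but it does not affect the argument.
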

\begin{proof}
  Since we have $I_d([Y(u)],[X(w)]) = \int_X [Y(u)] \cdot
  (\ev_1)_*[\GW_d(w)]$ by the projection formula, the corollary
  follows from Theorem~\ref{thm:push}.
\end{proof}

The divisor axiom \cite{kontsevich.manin:gromov-witten} (see also
\cite[(40)]{fulton.pandharipande:notes}) is valid for equivariant
Gromov-Witten invariants.  Let $Z \subset X$ be any $T$-stable divisor
and $0 < d \in H_2(X)$, and consider the variety $\ev_n^{-1}(Z)
\subset \Mb_{0,n}(X,d)$ and the morphism $\phi : \ev_n^{-1}(Z) \to
\Mb_{0,n-1}(X,d)$ that discards the $n$-th marked point in the domain
of its argument.  For a general stable map $f : C \to X$ in
$\Mb_{0,n-1}(X,d)$ we can identify the fiber $\phi^{-1}(f)$ with
$f^{-1}(Z) \subset C$, so it follows from Kleiman's transversality
theorem \cite{kleiman:transversality} that $\# \phi^{-1}(f) = ([Z],d)
= \int_d [Z]$ for all points $f$ in a dense open subset of
$\Mb_{0,n-1}(X,d)$.  We deduce that $\phi_*[\ev_n^{-1}(Z)] = ([Z],d)
\cdot [\Mb_{0,n-1}(X,d)]$, so the projection formula implies that
\begin{equation}\label{eqn:divisor_axiom}
  I_d(\Omega_1,\dots,\Omega_{n-1},[Z]) = 
  ([Z],d) \cdot I_d(\Omega_1,\dots,\Omega_{n-1}) \ \in \Lambda
\end{equation}
for all classes $\Omega_1,\dots,\Omega_{n-1} \in H_T^*(X)$.
In particular, the equivariant Gromov-Witten invariant 
$I_d(\Omega_1,\dots,\Omega_{n-1},[Z])$ depends only on the class of
$Z$ in the ordinary cohomology ring $H^*(X)$ and not on its
equivariant class in $H^*_T(X)$.

\begin{cor}\label{cor:chev_gw}
  Let $u,w \in W^P$, $\be \in \Delta$, and $0 < d \in H_2(X)$.  If the
  Gromov-Witten invariant $I_d([Y(u)],[Y(s_\be)],[X(w)])$ is non-zero,
  then there exists a unique root $\al \in R^+\ssm R^+_P$ such that
  {\rm(i)} $d = \alv + \Z\Delta_P^\vee$, {\rm(ii)} $w W_P = u s_\al
  W_P$, and {\rm(iii)} $\ell(w W_P) = \ell(u W_P) + 1 -
  (c_1(T_X),\alv)$.  If $\al \in R^+ \ssm R^+_P$ is any root
  satisfying {\rm(i)}, {\rm(ii)}, and {\rm(iii)}, then we have
  $\gw{[Y(u)],[Y(s_\be)],[X(w)]}{d} = (\omega_\be,\alv) \in \Z$.
\end{cor}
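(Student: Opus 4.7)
The plan is to reduce the three-point invariant to a two-point one via the divisor axiom~(\ref{eqn:divisor_axiom}) and then invoke Corollary~\ref{cor:2gw}. First I apply (\ref{eqn:divisor_axiom}) with $[Z] = [Y(s_\be)]$, which gives
\[
I_d([Y(u)],[Y(s_\be)],[X(w)]) \;=\; ([Y(s_\be)],d) \cdot I_d([Y(u)],[X(w)]) \,.
\]
Under the identifications of Section~\ref{sec:schubvars} we have $[Y(s_\be)] = \omega_\be$, and for $d = \alv + \Z\Delta_P^\vee$ the pairing $([Y(s_\be)], d)$ equals $(\omega_\be,\alv)$. Non-vanishing of the three-point invariant forces the two-point invariant to be non-zero, so Corollary~\ref{cor:2gw} produces a unique $P$-cosmall root $\al_0 \in R^+ \ssm R^+_P$ with $\al_0^\vee + \Z\Delta_P^\vee = d$, $uW_P = w s_{\al_0} W_P$, $\ell(uW_P) = \ell(w) + (c_1(T_X),\al_0^\vee) - 1$, and $I_d([Y(u)],[X(w)]) = 1$.

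Next I translate $\al_0$ to the root $\al$ of the present statement by $W_P$-conjugation. Writing $u = w s_{\al_0} p$ with $p \in W_P$, I set $\al := p^{-1}(\al_0)$. The key combinatorial fact is that $W_P$ preserves $R^+ \ssm R^+_P$: for any $q \in W_P$ and $\gamma \in R^+\ssm R^+_P$, the difference $q(\gamma)-\gamma$ lies in $\Z\Delta_P$, so the coefficients of $q(\gamma)$ on $\Delta\ssm\Delta_P$ agree with those of $\gamma$ and remain non-negative, forcing $q(\gamma) \in R^+\ssm R^+_P$. Hence $\al \in R^+ \ssm R^+_P$, and the conjugation identity $p^{-1} s_{\al_0} = s_\al p^{-1}$ yields $u^{-1} w = s_\al p^{-1}$, so $w W_P = u s_\al W_P$; this establishes (ii), and uniqueness of $\al$ is Lemma~\ref{lem:alpha_unique} applied to the coset $u^{-1}wW_P$. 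Since $p$ fixes both $c_1(T_X)$ and $\omega_\be$---these classes lie in $\Z\{\omega_\ga : \ga \in \Delta \ssm \Delta_P\}$, which $W_P$ fixes pointwise---while $\alv - \al_0^\vee \in \Z\Delta_P^\vee$, conditions (i) and (iii) follow by direct substitution, and the pairing transforms as $(\omega_\be,\alv) = (\omega_\be,\al_0^\vee)$, giving the claimed value. For the final assertion, any $\al$ satisfying (i)--(iii) admits the reverse construction: write $us_\al = wp'$ with $p' \in W_P$ and set $\al_0 := p'(\al)$; this $\al_0$ meets the hypotheses of Corollary~\ref{cor:2gw}, yielding $I_d([Y(u)],[X(w)]) = 1$ and hence $I_d([Y(u)],[Y(s_\be)],[X(w)]) = (\omega_\be,\alv)$.

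The main obstacle is the bookkeeping around the $W_P$-conjugation relating $\al_0$ and $\al$: one must verify that $p^{-1}(\al_0)$ stays in $R^+ \ssm R^+_P$, and check that conditions (i)--(iii) transform consistently under this conjugation. Once these combinatorial facts are in place, the remaining verifications are direct substitution.
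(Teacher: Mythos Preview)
Your argument is correct and follows the same route as the paper: apply the divisor axiom~(\ref{eqn:divisor_axiom}) to reduce to the two-point invariant, invoke Corollary~\ref{cor:2gw} to obtain a root on the $w$-side, and then conjugate by an element of $W_P$ to pass to a root $\al$ with $s_\al W_P = u^{-1}w W_P$; uniqueness is Lemma~\ref{lem:alpha_unique}. Your treatment is in fact slightly more detailed than the paper's, since you spell out why $W_P$ preserves $R^+\ssm R^+_P$ and you handle the converse direction explicitly.
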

\begin{proof}
  If $I_d([Y(u)],[X(w)],[Y(s_\be)]) \neq 0$ then it follows from
  (\ref{eqn:divisor_axiom}) and Corollary~\ref{cor:2gw} that there
  exists a root $\ga \in R^+\ssm R^+_P$ such $d = \gav +
  \Z\Delta_P^\vee$, $u W_P = w s_\ga W_P$, and $\ell(w W_P) = \ell(u
  W_P) + 1 - (c_1(T_X),\gav)$.  Since $u^{-1} w s_\ga \in W_P$ we
  deduce that $\al := u^{-1} w(-\ga) \in R^+\ssm R^+_P$ satisfies (i),
  (ii), and (iii).  The uniqueness of $\al$ follows from
  Lemma~\ref{lem:alpha_unique}.
\end{proof}

\begin{remark}\label{rmk:2kgw}
  The $K$-theoretic two-point invariants are easier to compute, since
  Proposition~\ref{prop:loctriv} together with the $K$-theoretic Gysin
  formula of \cite[Thm.~3.1]{buch.mihalcea:quantum} imply that
  $(\ev_1)_*[\cO_{\GW_d(w)}] = [\cO_{X(w \cdot z_d^P)}] \in K_T(X)$ for
  any degree $d \geq 0$.  It follows that any equivariant
  $K$-theoretic two-point Gromov-Witten invariant of $X$ is given by
  \[
  \begin{split}
    &\euler{\Mb_{0,2}(X,d)}(\ev_1^* [\cO_{Y(u)}] \cdot \ev_2^*
    [\cO_{X(w)}]) 
    \ = \ \euler{X}( [\cO_{Y(u)}] \cdot [\cO_{X(w \cdot z_d^P)}] ) \ \ \\
    & \ \ \ \ \ \ = \ \euler{X}(\cO_{Y(u) \cap X(w\cdot z_d^P)})
    \ = \ \begin{cases}
      1 & \text{if $u W_P \leq w \cdot z_d^P W_P$\,;} \\
      0 & \text{otherwise.}
    \end{cases}
  \end{split}
  \]
  We refer to \cite[\S 4]{buch.mihalcea:quantum} for notation.
  Unfortunately, the $K$-theoretic invariants do not satisfy a divisor
  axiom, so this formula does not reveal any 3-point invariants.
\end{remark}


\section{The equivariant quantum Chevalley formula}
\label{sec:chevalley}

The {\em $T$-equivariant quantum cohomology ring\/} $\QH_T(X)$ is an
algebra over the polynomial ring $\La[q] := \La[q_\be : \be \in \Delta
\ssm \Delta_P]$, where $\La = H^*_T(\pt)$, which as a $\La[q]$-module
is defined by $\QH_T(X) = H^*(X) \otimes_\Z \La[q]$.  The
multiplicative structure of $\QH_T(X)$ is given by
\[
[Y(u)] \star [Y(v)] = \sum_{w,d} I_d([Y(u)], [Y(v)], [X(w)])\, q^d\,
[Y(w)] \,,
\]
where the sum is over $w \in W^P$ and $0 \leq d \in H_2(X)$, and we
write $q^d = \prod_\be q_\be^{(\omega_\be,d)}$.  It was proved in
\cite{mihalcea:equivariant, mihalcea:equivariant*1} that if $v =
s_\be$ is a simple reflection, then the product $[Y(u)] \star
[Y(s_\be)]$ contains no {\em mixed\/} terms, i.e.\ if $d \neq 0$ then
the coefficient of $q^d [Y(w)]$ is always an integer.  This fact is
also a consequence of Corollary~\ref{cor:chev_gw}.

To state the equivariant quantum Chevalley formula, we need some
notation.  Since $G$ is simply connected, each integral weight $\la
\in \Z \{ \omega_\be \mid \be \in \Delta \}$ can be identified with a
character $\la : T \to \C^*$.  Let $\C_\la$ be the corresponding
one-dimensional representation of $T$, defined by $t.z = \la(t) z$.
This representation can be viewed as a $T$-equivariant vector bundle
over a point, so it defines the equivariant Chern class $c_T(\la) :=
c_1^T(\C_\la) \in \La$.  This class should {\em not\/} be confused
with the class that $\la$ might represent in $H^2(X) = \Z \{
\omega_\be \mid \be \in \Delta \ssm \Delta_P \}$ by the notation of
section~\ref{sec:schubvars}.  The ring $\La$ is the polynomial ring
over $\Z$ generated by the classes $c_T(\omega_\be)$ for $\be \in
\Delta$.  The equivariant quantum Chevalley formula is the following
result \cite{chevalley:decompositions, fulton.woodward:quantum,
  mihalcea:equivariant*1}.

\begin{thm}\label{thm:qchevalley}
  Let $u \in W^P$ and $\be \in \Delta\ssm\Delta_P$.  Then we have
  \[\begin{split}
    &[Y(u)] \star [Y(s_\be)] = \\
    &\sum_\alpha (\omega_\beta,\alpha^\vee)\, [Y(us_\alpha)] \ + \ 
    c_T(\omega_\beta - u.\omega_\beta)\,[Y(u)] \ + \ 
    \sum_\al (\omega_\be,\alv) \, q^{\alv} [Y(u s_\al)] \ ;
    \end{split}
    \]
    the first sum is over $\al \in R^+ \ssm R^+_P$ such that $\ell(u
    s_\al W_P) = \ell(u W_P) + 1$, and the second sum is over $\al \in
    R^+\ssm R^+_P$ such that $\ell(u s_\al W_P) = \ell(u W_P) + 1 -
    (c_1(T_X),\alv)$.
\end{thm}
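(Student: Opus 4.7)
The plan is to compute both sides directly from the definition of the equivariant quantum product
\[
[Y(u)] \star [Y(s_\be)] = \sum_{w \in W^P,\, d \geq 0} I_d([Y(u)],[Y(s_\be)],[X(w)])\, q^d\, [Y(w)]
\]
and to split the sum according to whether $d = 0$ or $d > 0$. The classical piece ($d=0$) and the quantum piece ($d>0$) will be seen to match the three summands in the theorem.

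The $d=0$ contribution is just the ordinary equivariant product $[Y(u)] \cdot [Y(s_\be)]$ in $H^*_T(X)$. This is the classical equivariant Chevalley formula, and I would simply cite it from \cite{mihalcea:equivariant*1} (or, equivalently, deduce it by localization at $T$-fixed points using the GKM description of $H^*_T(X)$). It yields precisely the first sum $\sum_\al (\omega_\be,\alv)[Y(us_\al)]$, indexed by $\al \in R^+ \ssm R^+_P$ with $\ell(us_\al W_P) = \ell(u W_P)+1$, together with the equivariant correction $c_T(\omega_\be - u.\omega_\be)[Y(u)]$.

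For the $d>0$ contribution, Corollary~\ref{cor:chev_gw} does all the work: the invariant $I_d([Y(u)],[Y(s_\be)],[X(w)])$ vanishes unless there exists a (necessarily unique, by Lemma~\ref{lem:alpha_unique}) root $\al \in R^+ \ssm R^+_P$ with $d = \alv + \Z\Delta_P^\vee$, $w W_P = u s_\al W_P$, and $\ell(w W_P) = \ell(u W_P) + 1 - (c_1(T_X),\alv)$, and in that case it equals $(\omega_\be,\alv)$. Thus the pairs $(d,wW_P)$ producing non-zero quantum contributions are in bijection, via $\al$, with the index set of the third sum in the theorem, and assembling these contributions gives exactly $\sum_\al (\omega_\be,\alv)\, q^{\alv}\, [Y(us_\al)]$.

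The main obstacle has already been cleared in the preceding sections: Corollary~\ref{cor:chev_gw} rests on the explicit two-point formula Corollary~\ref{cor:2gw}, which in turn depends on the identification $\Gamma_d(X(w)) = X(w \cdot z_d^P)$ and the unirational fibration of $\GW_d(w) \to \Gamma_d(X(w))$ (Proposition~\ref{prop:loctriv}), together with the divisor axiom \eqref{eqn:divisor_axiom} used to reduce three-point invariants with a divisor to two-point invariants. Granted these inputs, the only care required in writing up the proof is bookkeeping: verifying that the reparametrization of quantum terms from $(d, wW_P)$ to $\al$ is a bijection (here Lemma~\ref{lem:alpha_unique} ensures injectivity) and that no term is counted twice between the classical and quantum parts (which is automatic since the two cases correspond to disjoint values of $d$).
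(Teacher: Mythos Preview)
Your handling of the quantum terms ($d>0$) is identical to the paper's: both invoke Corollary~\ref{cor:chev_gw} directly and read off the bijection between nonzero contributions and roots $\al$ in the third sum. The difference lies in the classical piece ($d=0$). You propose to cite the equivariant Chevalley formula from \cite{mihalcea:equivariant*1} (or appeal to GKM localization), whereas the paper re-derives it in a self-contained way: for the off-diagonal coefficients $c^w_{u,s_\be}$ with $\ell(wW_P)=\ell(uW_P)+1$ it observes that $Y(u)\cap X(w)$ is the $T$-stable curve $u.C_\al$ and computes the pairing $(\omega_\be,\alv)$ directly; for the diagonal coefficient $c^u_{u,s_\be}$ it uses the Borel--Weil theorem to write $[Y(s_\be)] = c_1^T(L_{\omega_\be}) - c_1^T(X\times\C_{-\omega_\be})$ and then restricts to the fixed point $u.P$, obtaining $c_T(\omega_\be - u.\omega_\be)$. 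Your shortcut is valid and more economical; the paper's route buys self-containment and a transparent geometric explanation of where the equivariant correction term comes from.
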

\begin{proof}
  It follows from Corollary~\ref{cor:chev_gw} that the second sum
  accounts for all terms with non-zero $q$-degrees.  The remaining
  terms come from the equivariant product $[Y(u)] \cdot [Y(s_\be)] \in
  H^*_T(X)$, and the coefficient of $[Y(w)]$ in this product is
  \[
  c^w_{u,s_\be} = \int_X [Y(u)] \cdot [Y(s_\be)] \cdot [X(w)] = \int_X
  [Y(u) \cap X(w)] \cdot [Y(s_\be)] \ \in \La \,.
  \]
  This coefficient is non-zero only if $u \leq w$ and $\ell(w W_P)
  \leq \ell(u W_P) + 1$.  If $\ell(w W_P) = \ell(u W_P) + 1$, then the
  intersection $Y(u)\cap X(w)$ is a one-dimensional closed $T$-stable
  subvariety of $X$ whose $T$-fixed points consist of $u.P$ and $w.P$.
  It follows that $Y(u) \cap X(w) = u.C_\al$ and $w.P = u s_\al.P$ for
  some root $\al \in R^+ \ssm R^+_P$, and we have $c^w_{u,s_\be} =
  ([Y(s_\be)],[C_\al]) = (\omega_\be,\alv)$ as claimed.  This argument
  can also be found in e.g.\ \cite[Lemma~8.1]{fulton.woodward:quantum}
  or \cite[Prop.~1.4.3]{brion:lectures}.
  
  The last remaining term is $c^u_{u,s_\be} [Y(u)]$.  The projection
  formula implies that
  \[
  c^u_{u,s_\be} = \int_X [Y(s_\be)] \cdot [u.P] = [Y(s_\be)]_{u.P}
  \]
  where $[Y(s_\be)]_{u.P} \in \La$ is the restriction of $[Y(s_\be)]$
  to the $T$-fixed point $u.P \in X$.  Set $\la = \omega_\be$ and
  notice that $L_\la = G \times^P \C_{-\la}$ is a $G$-equivariant line
  bundle with action defined by $g' . [g,z] = [g'g,z]$.  According to
  the Borel-Weil theorem \cite[p.~99]{brion.kumar:frobenius} there
  exists a $B^\op$-stable section $\sigma \in H^0(X,L_\la)$, unique up
  to scalar, and we have $\C \sigma \cong \C_{-\la}$ as a
  $T$-representation.  This implies that $\sigma : X \times \C_{-\la}
  \to L_\la$ is a morphism of $T$-equivariant line bundles.  Since the
  zero section $Z(\sigma)$ is $B^\op$-stable, we deduce from
  (\ref{eqn:c1Lla}) that $Z(\sigma) = Y(s_\be)$.  The $T$-equivariant
  class of $Y(s_\be)$ is therefore given by
  \[
  [Y(s_\be)] = [Z(\sigma)] = c_1^T(L_\la) - c_1^T(X \times \C_{-\la}) \in
  H^2_T(X) \,.
  \]
  Since the fiber of $L_\la$ over $u.P$ is $L_\la(u.P) \cong
  \C_{-u.\la}$, we obtain
  \[
  [Y(s_\be)]_{u.P} = c_1^T(\C_{-u.\la}) - c_1^T(\C_{-\la}) =
  c_T(\omega_\be - u.\omega_\be) \,.
  \]
  This finishes the proof.
\end{proof}

\begin{remark}
  The full strength of Theorem~1 is not necessary to prove the quantum
  Chevalley formula.  We here sketch a short alternative argument that
  bypasses the combinatorial construction of $z_d^P$.  It follows from
  Proposition~\ref{prop:loctriv} that $\Gamma_d(X(w))$ is a Schubert
  variety in $X$ for all $w \in W^P$ and $d \in H_2(X)$.  Using this
  we can show that, if $d > 0$, then there {\em exists\/} a positive
  root $\al \in R^+ \ssm R^+_P$ such that $\Gamma_d(X(w)) =
  \Gamma_{d-\alv}(X(w \cdot s_\al))$.  In fact, if we write
  $\Gamma_d(X(w)) = X(u)$, let $C$ be a $T$-stable curve from $v.P \in
  X(w)$ to $u.P$, and let $v.C_\al \subset C$ a component such that
  $\ov{C \ssm v.C_\al}$ connects $vs_\al.P$ to $u.P$, then we must
  have $X(u) \subset \Gamma_{d-\alv}(X(v s_\al)) \subset
  \Gamma_{d-\alv}(X(w \cdot s_\al)) \subset
  \Gamma_{d-\alv}(\Gamma_\alv(X(w))) \subset \Gamma_d(X(w))$.  The
  arguments proving Theorems \ref{thm:zdP_length} and \ref{thm:push}
  now show that $\dim \Gamma_d(X(w)) \leq \ell(w) + (c_1(T_X),d)-1$,
  with equality if and only if $d = \alv+\Z\Delta_P^\vee$ for some
  root $\al \in R^+$ such that $\ell(w s_\al W_P) = \ell(w) +
  (c_1(T_X),d)-1$.  Theorem~\ref{thm:push} and the quantum Chevalley
  formula follow from this.
\end{remark}


\begin{thebibliography}{10}

\bibitem{bjorner.brenti:combinatorics}
A.~Bj{\"o}rner and F.~Brenti, \emph{Combinatorics of {C}oxeter groups},
  Graduate Texts in Mathematics, vol. 231, Springer, New York, 2005.
  \MR{2133266 (2006d:05001)}

\bibitem{bourbaki:lie*1}
N.~Bourbaki, \emph{Lie groups and {L}ie algebras. {C}hapters 4--6}, Elements of
  Mathematics (Berlin), Springer-Verlag, Berlin, 2002, Translated from the 1968
  French original by Andrew Pressley. \MR{1890629 (2003a:17001)}

\bibitem{brion:lectures}
M.~Brion, \emph{Lectures on the geometry of flag varieties}, Topics in
  cohomological studies of algebraic varieties, Trends Math., Birkh\"auser,
  Basel, 2005, pp.~33--85. \MR{2143072 (2006f:14058)}

\bibitem{brion.kumar:frobenius}
M.~Brion and S.~Kumar, \emph{Frobenius splitting methods in geometry and
  representation theory}, Progress in Mathematics, vol. 231, Birkh\"auser
  Boston Inc., Boston, MA, 2005. \MR{2107324 (2005k:14104)}

\bibitem{buch.chaput.ea:finiteness}
A.~S. Buch, P.-E. Chaput, L.~Mihalcea, and N.~Perrin, \emph{Finiteness of
  cominuscule quantum {$K$}-theory}, to appear in Ann. Sci. Ec. Norm. Super.,
  arXiv:1011.6658.

\bibitem{buch.kresch.ea:gromov-witten}
A.~S. Buch, A.~Kresch, and H.~Tamvakis, \emph{Gromov-{W}itten invariants on
  {G}rassmannians}, J. Amer. Math. Soc. \textbf{16} (2003), no.~4, 901--915
  (electronic). \MR{1992829 (2004h:14060)}

\bibitem{buch.mihalcea:quantum}
A.~S. Buch and L.~C. Mihalcea, \emph{Quantum {$K$}-theory of {G}rassmannians},
  Duke Math. J. \textbf{156} (2011), no.~3, 501--538. \MR{2772069
  (2011m:14092)}

\bibitem{buch.samuel:k-theory}
A.~S. Buch and M.~Samuel, \emph{{$K$}-theory of minuscule varieties}, preprint,
  2013.

\bibitem{chaput.manivel.ea:quantum*1}
P.-E. Chaput, L.~Manivel, and N.~Perrin, \emph{Quantum cohomology of minuscule
  homogeneous spaces}, Transform. Groups \textbf{13} (2008), no.~1, 47--89.
  \MR{2421317 (2009e:14095)}

\bibitem{chevalley:decompositions}
C.~Chevalley, \emph{Sur les d\'ecompositions cellulaires des espaces {$G/B$}},
  Algebraic groups and their generalizations: classical methods ({U}niversity
  {P}ark, {PA}, 1991), Proc. Sympos. Pure Math., vol.~56, Amer. Math. Soc.,
  Providence, RI, 1994, With a foreword by Armand Borel, pp.~1--23. \MR{1278698
  (95e:14041)}

\bibitem{demazure:desingularisation}
M.~Demazure, \emph{D\'esingularisation des vari\'et\'es de {S}chubert
  g\'en\'eralis\'ees}, Ann. Sci. \'Ecole Norm. Sup. (4) \textbf{7} (1974),
  53--88, Collection of articles dedicated to Henri Cartan on the occasion of
  his 70th birthday, I. \MR{0354697 (50 \#7174)}

\bibitem{fomin.kirillov:yang-baxter}
S.~Fomin and A.~N. Kirillov, \emph{The {Y}ang-{B}axter equation, symmetric
  functions, and {S}chubert polynomials}, Proceedings of the 5th {C}onference
  on {F}ormal {P}ower {S}eries and {A}lgebraic {C}ombinatorics ({F}lorence,
  1993), vol. 153, 1996, pp.~123--143. \MR{1394950 (98b:05101)}

\bibitem{fulton.pandharipande:notes}
W.~Fulton and R.~Pandharipande, \emph{Notes on stable maps and quantum
  cohomology}, Algebraic geometry---{S}anta {C}ruz 1995, Proc. Sympos. Pure
  Math., vol.~62, Amer. Math. Soc., Providence, RI, 1997, pp.~45--96.
  \MR{1492534 (98m:14025)}

\bibitem{fulton.woodward:quantum}
W.~Fulton and C.~Woodward, \emph{On the quantum product of {S}chubert classes},
  J. Algebraic Geom. \textbf{13} (2004), no.~4, 641--661. \MR{2072765
  (2005d:14078)}

\bibitem{gonciulea.lakshmibai:flag}
N.~Gonciulea and V.~Lakshmibai, \emph{Flag varieties}, Actualit\'es
  Math\'ematiques, Hermann, Paris, 2001.

\bibitem{hwang.kebekus:geometry}
J.-M. Hwang and S.~Kebekus, \emph{Geometry of chains of minimal rational
  curves}, J. Reine Angew. Math. \textbf{584} (2005), 173--194. \MR{2155089
  (2006e:14019)}

\bibitem{kleiman:transversality}
S.~L. Kleiman, \emph{The transversality of a general translate}, Compositio
  Math. \textbf{28} (1974), 287--297. \MR{0360616 (50 \#13063)}

\bibitem{kontsevich.manin:gromov-witten}
M.~Kontsevich and Yu. Manin, \emph{Gromov-{W}itten classes, quantum cohomology,
  and enumerative geometry}, Comm. Math. Phys. \textbf{164} (1994), no.~3,
  525--562. \MR{1291244 (95i:14049)}

\bibitem{kostant.kumar:t-equivariant}
B.~Kostant and S.~Kumar, \emph{{$T$}-equivariant {$K$}-theory of generalized
  flag varieties}, J. Differential Geom. \textbf{32} (1990), no.~2, 549--603.
  \MR{1072919 (92c:19006)}

\bibitem{lam.shimozono:quantum}
T.~Lam and M.~Shimozono, \emph{Quantum cohomology of {$G/P$} and homology of
  affine {G}rassmannian}, Acta Math. \textbf{204} (2010), no.~1, 49--90.
  \MR{2600433 (2011h:14082)}

\bibitem{landsberg.manivel:projective*1}
J.~M. Landsberg and L.~Manivel, \emph{On the projective geometry of rational
  homogeneous varieties}, Comment. Math. Helv. \textbf{78} (2003), no.~1,
  65--100. \MR{1966752 (2004a:14050)}

\bibitem{li.mihalcea:k-theoretic}
C.~Li and L.~Mihalcea, \emph{{$K$}-theoretic {G}romov-{W}itten invariants of
  lines in homogeneous spaces}, preprint ar$\chi$iv:1206.3593.

\bibitem{mihalcea:equivariant}
L.~Mihalcea, \emph{Equivariant quantum {S}chubert calculus}, Adv. Math.
  \textbf{203} (2006), no.~1, 1--33. \MR{2231042 (2007c:14061)}

\bibitem{mihalcea:equivariant*1}
L.~C. Mihalcea, \emph{On equivariant quantum cohomology of homogeneous spaces:
  {C}hevalley formulae and algorithms}, Duke Math. J. \textbf{140} (2007),
  no.~2, 321--350. \MR{2359822 (2008j:14106)}

\bibitem{stembridge:fully}
J.~R. Stembridge, \emph{On the fully commutative elements of {C}oxeter groups},
  J. Algebraic Combin. \textbf{5} (1996), no.~4, 353--385. \MR{1406459
  (97g:20046)}

\bibitem{strickland:lines}
E.~Strickland, \emph{Lines in {$G/P$}}, Math. Z. \textbf{242} (2002), no.~2,
  227--240. \MR{1980621 (2004d:14073)}

\bibitem{zak:tangents}
F.~L. Zak, \emph{Tangents and secants of algebraic varieties}, Translations of
  Mathematical Monographs, vol. 127, American Mathematical Society, Providence,
  RI, 1993, Translated from the Russian manuscript by the author. \MR{1234494
  (94i:14053)}

\end{thebibliography}

\providecommand{\bysame}{\leavevmode\hbox to3em{\hrulefill}\thinspace}
\providecommand{\MR}{\relax\ifhmode\unskip\space\fi MR }
\providecommand{\MRhref}[2]{%
  \href{http://www.ams.org/mathscinet-getitem?mr=#1}{#2}
}
\providecommand{\href}[2]{#2}

\end{document}